\setlist{nosep}
\renewcommand*{\backrefalt}[4]{\ifcase #1 (Not cited).\or (Cited p.~#2).\else (Cited pp.~#2).\fi}
\newcounter{claimcount}
\newcounter{enumlabelcount}
\newcounter{thmcount}
\newtheorem{theorem}{Theorem}[section]
\newtheorem{mthm}{Theorem} 
\newtheorem{lemma}[theorem]{Lemma} 
\newtheorem{corollary}[theorem]{Corollary}
\newtheorem{proposition}[theorem]{Proposition}
\theoremstyle{definition}
\newtheorem{definition}[theorem]{Definition}
\newtheorem{remark}[theorem]{Remark}
\newtheorem*{question*}{Question}
\newcommand*{\numberedtheorem}[3]{\theoremstyle{plain}\newtheorem*{makethm\thethmcount}{#1}
    \ifthenelse{\equal{#2}{}}{\begin{makethm\thethmcount}#3\end{makethm\thethmcount}\stepcounter{thmcount}}
    {\begin{makethm\thethmcount}[#2]#3\end{makethm\thethmcount}\stepcounter{thmcount}}}
\newcommand\enumlabel[1][]{\item[#1]
    \refstepcounter{enumlabelcount}\def\@currentlabel{#1}}\makeatother
\def\subsection{\@startsection{subsection}{1}\z@{.7\linespacing\@plus\linespacing}
    {.5\linespacing}{\normalfont\scshape\centering}}\makeatother 
\newenvironment{claim}[1]{\refstepcounter{claimcount}\par\vspace{2mm}\noindent\underline{Claim \theclaimcount:}\hspace{2mm}#1}{}
\newenvironment{claimproof}[1]{\par\vspace{2mm}\noindent\underline{Proof:}\hspace{2mm}#1}
    {\leavevmode\unskip\penalty9999\hbox{}\nobreak\hfill\quad\hbox{$\diamondsuit$}\vspace{2mm}}
\newcommand*{\cal}{\mathcal}
\renewcommand{\hat}{\widehat}
\renewcommand{\bar}{\overline}
\newcommand*{\C}{\mathcal{C}}
\newcommand*{\eps}{\varepsilon}
\newcommand*{\N}{\mathcal{N}}
\newcommand*{\R}{\mathbf{R}}
\newcommand*{\mfp}{\mathfrak p}
\newcommand*{\ppi}{\mfp\pi^i}
\newcommand*{\Q}{\mathcal T}
\newcommand*{\s}{\mathfrak{S}}
\newcommand*{\nest}{\sqsubset}
\newcommand*{\pnest}{\sqsubsetneq}
\newcommand*{\trans}{\pitchfork}
\DeclareMathOperator{\asdim}{asdim}
\DeclareMathOperator{\Aut}{Aut}
\DeclareMathOperator{\cone}{Co}
\DeclareMathOperator{\dist}{\mathsf{d}}
\DeclareMathOperator{\diam}{diam}
\DeclareMathOperator{\hull}{hull}
\DeclareMathOperator{\mcg}{MCG}
\DeclareMathOperator{\rel}{Rel}
\DeclareMathOperator{\SO}{SO}
\DeclareMathOperator{\Sp}{Sp}
\DeclareMathOperator{\stab}{Stab}
\newcommand{\ignore}[2]{\left\{\kern-.7ex\left\{#1\right\}\kern-.7ex\right\}_{#2}}
\newcommand*{\hsp}{\hspace{2mm}}
\newcommand*{\ssp}{\hspace{1mm}}
\tikzset{symbol/.style={draw=none,every to/.append style={edge node={node [sloped, allow upside down, auto=false]{$#1$}}}}}
\begin{document}
\title[Mapping class groups are quasicubical]{Mapping class groups are quasicubical}
\author{Harry Petyt}
\address{Mathematical Institute, University of Oxford, UK}
\email{petyt@maths.ox.ac.uk}

\begin{abstract}
It is proved that the mapping class group of any closed surface with finitely many marked points is quasiisometric to a CAT(0) cube complex. We provide two distinct proofs, one tailored to mapping class groups, and one applying to a larger class of groups.
\end{abstract}

\maketitle
\setcounter{tocdepth}{1}\tableofcontents

\section{Introduction} \label{section:introduction}

In the last few years, there has been a significant amount of work revolving around analogies between mapping class groups and CAT(0) cube complexes. These analogies have taken several forms of varying complexity, including the construction of counterparts of the curve graph \cite{kimkoberda:embedability,hagen:weak,genevois:hyperbolicities}, the development of a cubical version of the Masur--Minsky hierarchy \cite{masurminsky:geometry:2, behrstockhagensisto:hierarchically:1}, and the detection of right-angled Artin subgroups of mapping class groups \cite{koberda:right, clayleiningermangahas:geometry, runnels:effective}.

The effect has been an effusion of new understanding in both settings. For mapping class groups, this has included: confirmation of Farb's quasiflat conjecture \cite{behrstockhagensisto:quasiflats,bowditch:quasiflats}, semihyperbolicity \cite{durhamminskysisto:stable,haettelhodapetyt:coarse}, decision problems for subgroups \cite{bridson:onsubgroups, koberda:right}, and residual properties \cite{dahmanihagensisto:dehn, behrstockhagenmartinsisto:combinatorial}; and on the cubical side: versions of Ivanov's theorem \cite{ivanov:automorphisms, fioravanti:automorphisms}, characterisations of Morse geodesics \cite{abbottbehrstockdurham:largest,incertimedicizalloum:sublinearly}, control on purely loxodromic subgroups \cite{kimkoberda:geometry, koberdamangahastaylor:geometry}, and results on uniform exponential growth \cite{abbottngsprianoguptapetyt:hierarchically}.

Although this viewpoint has been very successful, the two settings are certainly distinct. Indeed, it is well known that (almost all) mapping class groups cannot act properly by semisimple isometries on any complete CAT(0) space \cite[Thm~4.2]{kapovichleeb:actions}; nor can they act properly on (finite- or infinite-dimensional) CAT(0) cube complexes \cite[Thm~1.9]{genevois:median}. In fact, it is unknown whether mapping class groups can act on CAT(0) cube complexes without having a global fixed point, as such an action would imply that they do not have property (T) \cite{nibloreeves:groups}. In any case, the lack of proper actions means that any direct correspondence between mapping class groups and CAT(0) cube complexes that is in some sense ``faithful'' can only be of a purely geometric character.

The main goal of this article is to obtain the strongest direct correspondence one could reasonably hope for. Specifically, we prove the following result.

\begin{mthm} \label{thm:mcgisacubecomplex}
For each closed, oriented surface $S$ with finitely many marked points, there exists a finite-dimensional CAT(0) cube complex $Q$ with a quasimedian quasiisometry \mbox{$\mcg(S)\to Q$}.
\end{mthm}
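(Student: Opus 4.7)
The plan is to exploit the hierarchically hyperbolic group (HHG) structure on $\mcg(S)$, in which the associated hyperbolic spaces are the curve graphs $\C(Y)$ of essential subsurfaces $Y \subseteq S$ and the coordinate projections are the subsurface projections of Masur--Minsky. This structure equips $\mcg(S)$ with a coarse median (in the sense of Bowditch), and the target cube complex $Q$ must carry a combinatorial median that matches this one up to bounded error, since a quasimedian quasiisometry transports one coarse median to another.

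My first step would be to organise the domains of the HHG structure into finitely many well-behaved families via the Bestvina--Bromberg--Fujiwara projection complex machinery: the isotopy classes of essential subsurfaces fall into finitely many $\mcg(S)$-orbits, and within each orbit one can colour subsurfaces using finitely many colours so that subsurfaces of the same colour have uniformly bounded mutual subsurface projections. For each colour one then assembles the corresponding curve graphs into a quasi-tree $\Q_c$ \`a la BBF. The HHG distance formula should yield a quasiisometric embedding $\mcg(S) \hookrightarrow \prod_c \Q_c$ into a finite product of quasi-trees, each of which is quasiisometric to a simplicial tree and hence to a one-dimensional CAT(0) cube complex.

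The second step is to upgrade this embedding to a genuine quasiisometry with a cube complex $Q$. The approach I would try is to build a wallspace on $\mcg(S)$ by choosing a family of walls in each $\Q_c$ (essentially edges of an approximating tree) and pulling them back via the projections. Applying Sageev's construction to the resulting wallspace yields a CAT(0) cube complex $Q$ with a natural map $\mcg(S) \to Q$; finite-dimensionality of $Q$ should follow from the fact that $\mcg(S)$ has finite hierarchical rank, which bounds the number of pairwise transverse walls.

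The principal obstacle, as I see it, is verifying that the resulting map is surjective up to bounded error \emph{and} quasimedian. The first part amounts to a consistency statement: every coherent orientation on the pulled-back walls must be realised (up to bounded distance) by some element of $\mcg(S)$, which is exactly what the consistency and partial realisation axioms of the HHG structure are designed to deliver. The quasimedian property is more delicate: one must show that the hierarchical median on $\mcg(S)$---the point realising componentwise Gromov medians across all curve graphs---is sent uniformly close to the combinatorial median in $Q$. Orchestrating the wall choices so that this holds, while simultaneously respecting the bounded geodesic image axiom across the whole hierarchy, is where I expect the bulk of the technical work to lie.
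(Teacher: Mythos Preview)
Your proposal contains a genuine gap at the first step. The Bestvina--Bromberg--Fujiwara quasitree of metric spaces $\Q_c$ built from the curve graphs of a single colour is \emph{not} a quasitree: it is hyperbolic, but it contains the curve graphs $\C(Y)$ as isometrically embedded subspaces, and curve graphs of nonsporadic surfaces are not quasitrees. So your sentence ``each of which is quasiisometric to a simplicial tree and hence to a one-dimensional CAT(0) cube complex'' is false, and the rest of the argument, which relies on pulling back tree-walls from the $\Q_c$, cannot proceed as written.

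The paper repairs this in two different ways. One route (Section~\ref{section:mainresult}) uses the more refined construction of \cite{bestvinabrombergfujiwara:proper}, which first decomposes each curve graph into finitely many orbits of quasigeodesics and only then runs the BBF machine on those; the resulting quasitrees of quasilines genuinely \emph{are} quasitrees, and the bulk of the work is proving that the orbit map into their product is quasimedian (Proposition~\ref{prop:hptoqg}). The other route (Section~\ref{section:buyalo}) keeps the hyperbolic spaces $\C_K\s_i$ but observes they have finite asymptotic dimension, and proves a general result (Theorem~\ref{mthm:hyp_asdim_ccc}) that any such hyperbolic space admits a quasimedian quasiisometric embedding in a finite product of trees, via the Buyalo--Dranishnikov--Schroeder hyperbolic-cone construction.

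Your second step also diverges from the paper. Rather than pulling back walls and running Sageev---where, as you note, coarse surjectivity and the quasimedian property would each require substantial verification---the paper uses a cleaner device (Proposition~\ref{proposition:hagenpetyt2.11}): once one has a quasimedian quasiisometric embedding into any finite-dimensional CAT(0) cube complex, the image is at bounded Hausdorff distance from a $1$--connected median subalgebra, which is itself the $0$--skeleton of a CAT(0) subcomplex. This immediately promotes the embedding to a quasiisometry, with both surjectivity and the quasimedian property coming for free.
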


It is well established that the geometry of a CAT(0) cube complex is completely described by its hyperplane combinatorics \cite{sageev:ends,sageev:codimension}, and this can equally be interpreted in terms of its \emph{median structure} \cite{chepoi:graphs,roller:poc}. Mapping class groups have an analogous  \emph{coarse median structure} \cite{bowditch:coarse} (see Section~\ref{section:preliminaries}), where the role of the median map is played by the ``centroid'' construction of Behrstock--Minsky  \cite{behrstockminsky:centroids}. The fact that the quasiisometry in Theorem~\ref{thm:mcgisacubecomplex} is quasimedian means that, up to a bounded error, it respects these (coarse) median structures, making the correspondence it provides considerably stronger than just a quasiisometry. For instance, the quasimedian property can be used to prove that the \emph{median-quasiconvex} (also known as \emph{hierarchically quasiconvex}) subsets of $\mcg(S)$ exactly correspond to the convex subcomplexes of $Q$; this is made precise in Corollary~\ref{cor:convexity_correspondence}. This has been used to show that the curve graph of $S$ can be coarsely reconstructed from the hyperplane combinatorics of $Q$ \cite[Prop.~7.16]{petytzalloum:constructing}.

One immediate consequence of Theorem~\ref{thm:mcgisacubecomplex} is that mapping class groups admit proper cobounded \emph{quasiactions} on CAT(0) cube complexes, which provides an interesting contrast with the situation for actions. Moreover, the quasiactions produced here are by ``cubical quasi-automorphisms'', not merely by self-quasiisometries. It also follows from Theorem~\ref{thm:mcgisacubecomplex} that $\mcg(S)$ admits a \emph{bounded quasigeodesic bicombing}, and so is \emph{weakly semihyperbolic} in the sense of \cite{alonsobridson:semihyperbolic}. 

A related result to Theorem~\ref{thm:mcgisacubecomplex} has recently been obtained by Hamenst\"adt \cite{hamenstadt:geometry:2a}, who constructs a uniformly locally finite CAT(0) cube complex $C$ with a proper, coarsely onto, Lipschitz map $C\to\mcg(S)$. Hamenst\"adt shows that the space of complete geodesic laminations of $S$ is homeomorphic to the \emph{regular Roller boundary} of $C$. By work of Fern\'os--L\'ecureux--Math\'eus \cite{fernoslecureuxmatheus:contact}, this in turn is homeomorphic to the boundary of a quasitree: the \emph{contact graph} of $C$ \cite{hagen:weak}. However, the map $C\to\mcg(S)$ is not a quasiisometry, and is not known to be quasimedian.

\medskip

This article contains two proofs of Theorem~\ref{thm:mcgisacubecomplex}. The first uses more traditional mapping class group machinery, whereas the second takes place in a setting that is considerably more general than just mapping class groups: the setting of \emph{colourable hierarchically hyperbolic groups} (see Section~\ref{subsection:hhs}). Whilst it should be noted that it is possible to construct hierarchically hyperbolic groups that are not colourable \cite{hagen:non}, all the key examples currently known are colourable \cite{bestvinabrombergfujiwara:constructing, hagenmartinsisto:extra, hughes:lattices, dowdalldurhamleiningersisto:extensions:2, hagenrussellsistospriano:equivariant}. 

\begin{mthm} \label{thm:colourable_HHGs_quasicubical}
Let $G$ be a colourable hierarchically hyperbolic group. There is a finite-dimensional CAT(0) cube complex $Q$ with a quasimedian quasiisometry $G\to Q$.
\end{mthm}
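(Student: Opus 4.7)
The natural plan is to use the colouring to obtain a Bestvina--Bromberg--Fujiwara (BBF) style quasi-isometric embedding of $G$ into a product of tree-like spaces, and then cubulate. Fix a colouring of $\s$ into finitely many classes $\s_1,\dots,\s_n$, in which domains of a common class are pairwise transverse or equal. Within each class $\s_c$, the family $\{\C U : U \in \s_c\}$ together with subsurface projections satisfies the BBF projection-complex axioms, so the BBF construction assembles the $\C U$ into a hyperbolic ``quasi-tree of pieces'' $X_c$. A further reduction --- for instance by approximating $X_c$ by a quasi-tree, which within a single colour class is manageable because the pairwise transversality simplifies the BBF glueing, and then invoking Manning's bottleneck criterion --- produces a simplicial tree $T_c$ quasi-isometric to $X_c$; this is a $1$-dimensional CAT(0) cube complex.

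The hierarchical projections $\pi_U$ then assemble into a map $\Phi : G \to \prod_c T_c$. Combining the HHG distance formula with the BBF distance formula inside each $X_c$, one verifies that $\Phi$ is a quasi-isometric embedding. To upgrade to a quasi-isometry, I would take $Q$ to be the smallest convex subcomplex of $\prod_c T_c$ containing $\Phi(G)$; this remains a finite-dimensional CAT(0) cube complex, and $\Phi(G)$ is coarsely dense in it because cubical hulls in a product of trees are determined coordinate-wise by the convex hulls of the projections.

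It remains to show that $\Phi$ is quasimedian. Since medians in $Q$ decompose coordinate-wise and are uniquely determined in each tree factor, the task reduces to checking that, in each colour $c$, the image in $T_c$ of the Behrstock--Minsky centroid of a triple in $G$ lies uniformly close to the tree median of the images of the triple. This last step is where I expect the main obstacle, and it is precisely where colourability is used in an essential way: pairwise transversality within a single colour class is the ingredient that forces the BBF centroid and the coordinate-wise tree median to agree up to bounded error after all the subsurface projections are assembled. The other steps are broadly modular applications of the HHG distance formula, the BBF machinery, and standard facts about cube complexes; the genuine mathematical content of the proof lies in reconciling the Behrstock--Minsky centroid with the coordinate-wise cubical median on $Q$.
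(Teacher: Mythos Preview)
Your proposal has the right large-scale shape---assemble a BBF-style product for the colour classes, then cubulate---but there is a genuine gap at the step ``approximate $X_c$ by a quasi-tree''. The BBF quasi-tree of metric spaces $X_c=\C_K\s_c$ is a \emph{quasi-tree} only when the pieces $\C U$ are themselves uniformly quasi-trees; in general it is merely hyperbolic, and each $\C U$ sits inside it isometrically. For mapping class groups the $\C U$ are curve graphs, which are famously not quasi-trees, so no amount of ``simplified glueing from pairwise transversality'' will make $X_c$ tree-like. Manning's bottleneck criterion fails outright. This is not a technicality: the paper's entire Theorem~\ref{mthm:hyp_asdim_ccc} exists precisely to bridge this gap. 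What the paper does instead is observe that each $X_c$ has finite asymptotic dimension (because the $\C U$ do, uniformly) and then invoke the Buyalo--Dranishnikov--Schroeder hyperbolic-cone construction to quasimedian-quasiisometrically embed $X_c$ in a finite product of trees. The quasimedian assertion for that embedding is new and is the technical heart of Section~\ref{section:buyalo}.

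There is a second gap in your coarse-density step. In a product of trees the smallest convex subcomplex containing $\Phi(G)$ is the product of the coordinate-wise hulls, and $\Phi(G)$ is certainly \emph{not} coarsely dense in that product in general (think of the diagonal in $\Z^2$, or more seriously of $\mcg(S)$ inside a product whose dimension far exceeds the complexity of $S$). The correct mechanism is Proposition~\ref{proposition:hagenpetyt2.11}: once the embedding is quasimedian, the image is uniformly close to a $1$--connected median subalgebra of the $0$--skeleton, and \emph{that} is the $0$--skeleton of a CAT(0) subcomplex. Finally, the quasimedian property of $G\to\prod_c X_c$ that you flag as the main obstacle is indeed nontrivial, but it is not proved here; the paper imports it from \cite{hagenpetyt:projection}.
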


As well as being a generalisation of Theorem~\ref{thm:mcgisacubecomplex}, this can be viewed as a ``globalisation'' of powerful approximation results of Behrstock--Hagen--Sisto and Bowditch \cite{behrstockhagensisto:quasiflats,bowditch:convex}. Namely, those authors show that, under conditions satisfied by all colourable hierarchically hyperbolic groups, the median-quasiconvex \emph{hull} of any finite subset is uniformly (in terms of the number of points) quasimedian quasiisometric to some finite CAT(0) cube complex. This finitary approximation is a key ingredient in both the resolution of Farb's quasiflat conjecture and the recent proofs of semihyperbolicity. In fact, as well as being global, Theorem~\ref{thm:colourable_HHGs_quasicubical} implies a stronger finitary statement where there is no dependence on the number of points; this is a consequence of the correspondence between median-quasiconvexity in $G$ and convexity in $Q$ (Corollary~\ref{cor:convexity_correspondence}). 

A simple consequence of Theorem~\ref{thm:colourable_HHGs_quasicubical} is the recovery of a ``stable cubulation'' result of Durham--Minsky--Sisto for groups \cite[Thm~A]{durhamminskysisto:stable}, albeit with a less tight dimension bound (and without the equivariance of \cite[Thm~4.1]{durhamminskysisto:stable}). See Section~\ref{section:convexity} for more discussion. 

Outside the setting of groups, one can also use the tools of this paper (discussed at the end of the introduction) together with \cite[Thm~4.3, Lem.~4.10]{eskinmasurrafi:large} to prove the following.

\begin{mthm}
Teichm\"uller space, with either the Teichm\"uller metric or the Weil--Petersson metric, admits a quasimedian quasiisometry to a finite-dimensional CAT(0) cube complex.
\end{mthm}

\medskip

An essential part of the proof of Theorem~\ref{thm:colourable_HHGs_quasicubical} is a criterion to determine when a hyperbolic space is quasiisometric to a finite-dimensional CAT(0) cube complex. A result of Haglund--Wise shows that every hyperbolic group is quasiisometric to a locally finite CAT(0) cube complex \cite{haglundwise:combination}, and the argument also applies to uniformly proper hyperbolic spaces. Surprisingly, though, it seems that it the corresponding result for non-proper hyperbolic spaces was not previously known. The following theorem remedies this. (Note that every uniformly proper hyperbolic space has finite asymptotic dimension \cite{gromov:asymptotic,bonkschramm:embeddings}.)

\begin{mthm} \label{mthm:hyp_asdim_ccc}
If $X$ is a hyperbolic space, then $X$ is quasiisometric to a finite-dimensional CAT(0) cube complex if and only if $X$ has finite asymptotic dimension.
\end{mthm}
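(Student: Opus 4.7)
The forward direction is straightforward: by Wright's theorem, $\asdim Q \leq \dim Q$ for any CAT(0) cube complex $Q$, and asymptotic dimension is preserved under quasiisometries. So if $X$ is quasiisometric to a finite-dimensional CAT(0) cube complex then $\asdim X < \infty$.

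For the converse, assume $X$ is hyperbolic with $\asdim X = n < \infty$. I would first reduce to the case that $X$ is a uniformly locally finite hyperbolic graph by replacing $X$ with a $1$-separated net and joining pairs of points within a fixed distance by edges; this does not affect the hyperbolicity, asymptotic dimension, or quasiisometry class.

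The strategy is to produce a wall structure on $X$ whose Sageev dual $Q$ is a finite-dimensional CAT(0) cube complex quasiisometric to $X$. Finite asymptotic dimension yields, for each scale $r$ in a geometric progression $\{2^k\}_{k\geq 0}$, a cover $\mathcal{U}_r$ of $X$ by sets of diameter $O(r)$ with multiplicity at most $n+1$. From each cover one extracts walls — for instance, by partitioning $\mathcal{U}_r$ into at most $n+1$ pairwise disjoint subfamilies (possible by the multiplicity bound) and using each member of such a subfamily to produce a separating partition of $X$ into ``inside'' and a controlled thickening of ``outside''. The multiplicity bound caps the number of pairwise-crossing walls at each scale, and hyperbolic thinness of triangles limits interaction between walls of vastly different scales, so Sageev's construction yields a CAT(0) cube complex $Q$ whose dimension is bounded in terms of $n$ and the hyperbolicity constant.

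The main obstacle is verifying that the induced map $X \to Q$ is a quasiisometry, or equivalently that the wall pseudometric $d_{\mathcal{W}}$ on $X$ — the number of walls separating two given points — is comparable to $d_X$. The upper bound $d_{\mathcal{W}} \leq C \cdot d_X$ follows from a pigeonhole argument: a geodesic can enter only boundedly many cover elements per unit of length at each scale. The lower bound is the delicate part. Given $x, y \in X$ with $d_X(x,y) = D$, I would exhibit, at each scale $r = 2^k \leq D$, on the order of $D/r$ walls separating $x$ from $y$, by marking points along a geodesic $[x,y]$ at spacing $r$ and using the thin-triangle property to force any path between $x$ and $y$ to cross the corresponding walls. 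Summing the geometric series $\sum_{k} D/2^k$ recovers $D$ up to a multiplicative constant, giving the required lower bound and hence the quasiisometry.
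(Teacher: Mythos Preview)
Your forward direction is correct and matches the paper exactly.

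For the converse, your approach is genuinely different from the paper's---you attempt a direct Sageev-style cubulation of $X$, whereas the paper passes to the Gromov boundary, builds the hyperbolic cone $\cone(\partial X)$, and uses the Buyalo--Dranishnikov--Schroeder construction to embed that cone in a finite product of trees. However, your argument has two real gaps.

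First, your walls do not do what you claim. A wall arising from a bounded cover element $U$ is the partition $\{U,\,X\smallsetminus U\}$; this separates $x$ from $y$ precisely when exactly one of $x,y$ lies in $U$. For a point $p$ in the middle of $[x,y]$ and a scale-$r$ cover element $U\ni p$ with $r\ll\dist(x,y)$, neither $x$ nor $y$ lies in $U$, so this wall does \emph{not} separate them. Consequently, at scale $r$ the separating walls are only those containing $x$ or $y$, of which there are at most $2(n+1)$ by the multiplicity bound; summing over scales gives $O(n\log D)$ separating walls, not $\sim D$. Your geometric-series lower bound does not hold for this wall structure.

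Second, even granting that the wall pseudometric $d_{\cal W}$ is bilipschitz to $\dist_X$, this only yields a quasiisometric \emph{embedding} $X\to Q$; it is not ``equivalent'' to $X\to Q$ being a quasiisometry, because coarse surjectivity is missing. Sageev's dual can contain consistent orientations far from any principal one. The paper confronts the same issue: its embedding of $X$ into a product of trees is a priori only a quasiisometric embedding, and it is upgraded to a quasiisometry onto a CAT(0) subcomplex by showing the embedding is \emph{quasimedian} and then invoking Proposition~\ref{proposition:hagenpetyt2.11}. You have no analogue of this step. (As a minor point, your reduction to a uniformly locally finite graph also fails: a tree of infinite valence has $\asdim=1$ but no $1$--separated net with locally finite Rips graph. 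The paper explicitly notes that the output cube complex need not be locally finite.)
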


It should be noted that the fact that the cube complexes in Theorem~\ref{mthm:hyp_asdim_ccc} are only finite-dimensional, not locally finite as in the Haglund--Wise result for hyperbolic groups, is necessary, as Theorem~\ref{mthm:hyp_asdim_ccc} applies to locally infinite trees, for instance. This raises a natural question.

\begin{question*}
Are mapping class groups quasimedian quasiisometric to uniformly locally finite CAT(0) cube complexes? What about colourable hierarchically hyperbolic groups?
\end{question*}

One might hope to use the \emph{Alice's diary} construction of \cite{buyalodranishnikovschroeder:embedding}, which upgrades certain quasiisometric embeddings of hyperbolic spaces in finite products of trees to quasiisometric embeddings in finite products of \emph{binary} trees. However, that construction relies in an essential way on the assumption that the boundary of the hyperbolic space is \emph{doubling}, and unfortunately the doubling condition fails for the relevant spaces in this article.

\medskip

The result that mapping class groups are quasiisometric to finite-dimensional CAT(0) cube complexes (Theorem~\ref{thm:mcgisacubecomplex}) also has interesting implications from the cubical perspective. Indeed, any CAT(0) cube complex that is quasiisometric to a mapping class group must have some peculiar properties.

\begin{mthm} \label{thm:Q_weird}
There exist finite-dimensional CAT(0) cube complexes that have discrete quasiisometry group, are quasiisometric to finitely generated groups, and are not quasiisometric to any CAT(0) cube complex admitting a proper cobounded group action.
\end{mthm}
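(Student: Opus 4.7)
I would take $Q$ to be the cube complex produced by Theorem~\ref{thm:mcgisacubecomplex} for a closed surface $S$ of genus at least three, so that $\mcg(S)$ admits no proper action on any CAT(0) cube complex by \cite[Thm~1.9]{genevois:median}. By construction, $Q$ is quasiisometric to the finitely generated group $\mcg(S)$, and its quasiisometry group coincides with $\mathrm{QI}(\mcg(S))$; the quasiisometric rigidity of mapping class groups (Behrstock--Kleiner--Minsky--Mosher, Hamenst\"adt, Bowditch) identifies the latter with the extended mapping class group up to commensurability, which is a countable group, yielding discreteness of $\mathrm{QI}(Q)$.

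For the remaining property, suppose for contradiction that $Q$ is quasiisometric to a CAT(0) cube complex $Q'$ on which some group $H$ acts properly and coboundedly. Then $H$ is quasiisometric to $\mcg(S)$, and QI rigidity supplies a finite-index subgroup $H_0\le H$ together with a finite normal subgroup $N\trianglelefteq H_0$ such that $H_0/N$ is isomorphic to a finite-index subgroup $M_0\le\mcg(S)$. The plan is to promote the restricted proper cobounded action of $H_0$ on $Q'$ into a proper cobounded action of $\mcg(S)$ itself on some CAT(0) cube complex, which would contradict \cite[Thm~1.9]{genevois:median}. This is achieved in two standard moves: first, pass the action from $H_0$ down to a proper cobounded action of $H_0/N$ on a CAT(0) cube complex (exploiting that $N$ is a finite normal subgroup); second, induce the resulting action of $M_0\cong H_0/N$ up to a proper cobounded action of $\mcg(S)$ on a finite product of copies.

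The principal obstacle is the first move: although $N$ fixes a cube of $Q'$ by the Bruhat--Tits fixed-point theorem, its fixed-point set $Q'^N$ need not be a cube subcomplex in the given cubulation. A natural remedy is to replace $Q'$ by the combinatorial hull of $Q'^N$ (or by passing to a suitable subdivision), producing an $H_0$-invariant CAT(0) cube subcomplex on which $N$ acts trivially and $H_0/N$ therefore acts properly and coboundedly. The second move is routine: one forms the product of $[\mcg(S):M_0]$ copies of the resulting cube complex equipped with the standard induced action of $\mcg(S)$ (permutation on coordinates composed with the $M_0$-action on each factor), which manifestly preserves the cube complex structure and the proper cobounded nature of the action.
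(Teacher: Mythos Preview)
Your overall strategy matches the paper's: take $Q$ from Theorem~\ref{thm:mcgisacubecomplex}, use QI-rigidity of $\mcg(S)$ for discreteness of $\mathrm{QI}(Q)$, and argue that a proper cobounded action on any cube complex $Q'$ QI to $Q$ would force $\mcg(S)$ to be virtually cubulated. The paper dispatches this last step in one line by citing Kapovich--Leeb; you correctly notice that QI-rigidity only gives weak commensurability, so there is a finite normal kernel $N\trianglelefteq H_0$ to deal with, and you try to resolve it.

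However, your proposed Step~1 does not work. The cubical hull of $(Q')^N$ is in general \emph{not} pointwise fixed by $N$: if $N=\mathbf Z/2$ swaps the coordinates of $\mathbf R^2$, the fixed set is the diagonal, which crosses every hyperplane, so its cubical hull is all of $\mathbf R^2$, on which $N$ acts nontrivially. Passing to a cubical subdivision does not help---the diagonal is never a subcomplex of any cubical subdivision of the square tiling---so you do not obtain a CAT(0) cube complex with a proper $M_0$-action, and you cannot invoke Genevois' obstruction. The paper's choice of Kapovich--Leeb rather than Genevois is what makes the argument go through: the KL obstruction only needs a proper action by semisimple isometries on a complete CAT(0) \emph{space}, and it is really a statement about centraliser configurations of Dehn twists, which persist under finite extensions. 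Since $H$ itself (being a finite extension of a finite-index subgroup of $\mcg(S)$) contains infinite-order elements with the same problematic centraliser structure, and since its cellular action on $Q'$ is automatically semisimple, you can run the KL argument directly for $H$ acting on $Q'$ and obtain the contradiction without ever constructing a cubical action of $M_0$ or $\mcg(S)$.
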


\begin{proof}
Let $Q$ be a CAT(0) cube complex quasiisometric to the mapping class group of some surface $S$, as given by Theorem~\ref{thm:mcgisacubecomplex}. By quasiisometric rigidity of mapping class groups \cite[Thm~1.1]{behrstockkleinerminskymosher:geometry}, the quasiisometry group of $Q$ is isomorphic to $\mcg(S)$, which is discrete. If a cube complex quasiisometric to $Q$ admitted a proper cobounded group action, then $\mcg(S)$ would be virtually cubulated, contradicting \cite{kapovichleeb:actions}.
\end{proof}

To the best of my knowledge, these are the first examples of CAT(0) cube complexes with these properties. We now discuss examples with subsets of these properties that arise from settings other than mapping class groups.

For examples of CAT(0) cube complexes that are quasiisometric to groups but don't admit proper cobounded group actions, let $\Gamma<\Sp(n,1)$ be a uniform lattice. The group $\Gamma$ is hyperbolic, so is quasiisometric to a CAT(0) cube complex $Q_\Gamma$ by \cite[Thm~1.8]{haglundwise:combination} or Theorem~\ref{mthm:hyp_asdim_ccc}. It also has property~(T) by work of Kazhdan (see \cite[\S3.3]{bekkadelaharpevalette:kazhdans}) and Kostant \cite{kostant:onexistence}. By Pansu's rigidity theorem \cite{pansu:metriques}, if $Q_\Gamma$ admitted a proper cobounded group action, then $\Gamma$ would act with unbounded orbits on some CAT(0) cube complex, contradicting \cite[Thm~B]{nibloreeves:groups}. On the other hand, it can be seen that $Q_\Gamma$ does not have discrete quasiisometry group. Indeed, Schwartz's theorem \cite{schwartz:quasiisometry} implies that the quasiisometry group of $\Gamma$ is isomorphic to the commensurator of $\Gamma$. By Corlette \cite{corlette:archimedean} or Gromov--Schoen \cite{gromovschoen:harmonic}, $\Gamma$ is arithmetic, so Margulis' characterisation of arithmeticity \cite[Thm~9]{margulis:discrete} (also see \cite[\S6.2]{zimmer:ergodic}) implies that the commensurator of $\Gamma$ is Hausdorff-dense in $\Sp(n,1)$. Hence $\Gamma$ has indiscrete quasiisometry group.

For examples with infinite, discrete quasiisometry group, let $\Lambda<\SO(n,1)$ be a nonarithmetic nonuniform lattice, which exists by \cite{gromovpiatetskishapiro:nonarithmetic}. The group $\Lambda$ is hyperbolic relative to virtually abelian subgroups, so by residual finiteness, $\Lambda$ is virtually a colourable hierarchically hyperbolic group \cite[Thm~9.1]{behrstockhagensisto:hierarchically:2}, and hence is quasiisometric to a CAT(0) cube complex $Q_\Lambda$ by Theorem~\ref{thm:colourable_HHGs_quasicubical}. By Margulis' characterisation, $\Lambda$ has finite index in its commensurator, so the quasiisometry group of $Q_\Lambda$ is discrete by Schwartz's theorem. Whether $\Lambda$ can virtually act properly coboundedly on a CAT(0) cube complex is unknown in general, but Wise showed that $\Lambda$ is virtually compact special, hence cocompactly cubulated, when $n=3$ \cite[Thm~17.14]{wise:structure}.

I thank one of the anonymous referees for informing me of the following family of CAT(0) square complexes that have infinite, discrete quasiisometry group and are constructed independently of Theorem~\ref{thm:colourable_HHGs_quasicubical}. Let $T$ be a tree whose automorphism group acts freely and cocompactly (for instance, the universal cover of the below graph). The CAT(0) square complex $T\times T$ also has free cocompact isometry group. Let $\Delta$ be the right-angled octagon complex obtained from $T\times T$ by performing a branched cover of order two at the centre of each square, whose isometry group is also proper and cocompact. The complex $\Delta$ is a Fuchsian building in the sense of \cite{bourdon:surimmeubles}, and hence a theorem of Xie shows that the quasiisometry group of $\Delta$ is isomorphic to $\Aut\Delta$ \cite[Thm~1.2]{xie:quasiisometric}. Subdividing the octagons of $\Delta$ yields a CAT(0) square complex with infinite, discrete quasiisometry group.

\begin{figure}[ht]
\includegraphics[width=2cm]{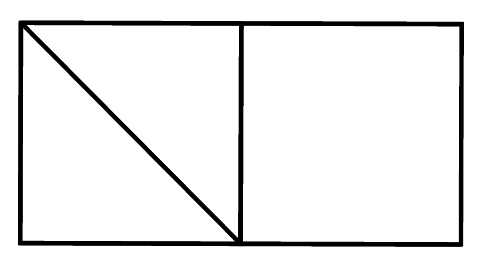}\centering
\end{figure}

\subsubsection*{\emph{\textbf{Outline of the proofs of Theorem~\ref{thm:mcgisacubecomplex}}}} ~

The starting points for the proofs of Theorem~\ref{thm:mcgisacubecomplex} presented in Sections~\ref{section:mainresult} and~\ref{section:buyalo} are constructions of Bestvina--Bromberg--Fujiwara \cite{bestvinabrombergfujiwara:proper} and Buyalo--Dranishnikov--Schroeder \cite{buyalodranishnikovschroeder:embedding}, respectively. The techniques involved are rather different: Section~\ref{section:mainresult} is based around closest-point projections to a family of quasigeodesics in curve graphs that were carefully constructed in \cite{bestvinabrombergfujiwara:proper}, whereas Section~\ref{section:buyalo} is chiefly concerned with a construction for embedding the \emph{hyperbolic cone} on a bounded metric space in a finite product of trees \cite{buyalodranishnikovschroeder:embedding}.

The main point that we use is that these both provide routes to quasiisometrically embedding mapping class groups in finite products of trees. On its own this is not enough to obtain a quasiisometry to a CAT(0) cube complex. Indeed, there is no reason why an arbitrary subset of Euclidean space should admit a quasiisometry to a CAT(0) cube complex, and even if a subset \emph{is} abstractly quasiisometric to a CAT(0) cube complex, it may not be possible to see this from the ambient cubical structure: consider the log-spiral in $\R^2$, for instance. In general the relationship between CAT(0) cube complexes and finite products of trees is surprisingly subtle---there are locally finite CAT(0) cube complexes that cannot be isometrically embedded in any finite product of trees, even in dimensions as low as five \cite{chepoihagen:onembeddings}.

The key to resolving this issue is the quasimedian property. More precisely, given a quasiisometric embedding of $X$ in a finite product of trees, if one has the additional information that it is quasimedian, then it turns out to be possible to obtain a quasiisometry from $X$ to a CAT(0) cube complex (see Proposition~\ref{proposition:hagenpetyt2.11}). Our principal strategy is therefore to show that the embeddings we consider are quasimedian. 

\medskip

\textbf{Proof A.} Section~\ref{section:buyalo} begins by applying the Bestvina--Bromberg--Fujiwara construction as in \cite{bestvinabrombergfujiwara:constructing}. Namely, \cite{bestvinabrombergfujiwara:constructing} produces a finite colouring of the subsurfaces of $S$ such that any two of the same colour overlap. In each colour, the curve graphs of the subsurfaces of that colour can be assembled into a \emph{quasitree of metric spaces}. This is a hyperbolic graph that is built by taking the disjoint union of those curve graphs and adding edges between certain pairs of them; roughly, a pair gets an edge when their subsurface projections to all other subsurfaces of that colour almost coincide. It was shown in \cite{bestvinabrombergfujiwara:constructing} that $\mcg(S)$ quasiisometrically embeds in the product of these hyperbolic graphs, and the embedding was shown to be quasimedian in \cite{hagenpetyt:projection}. This is not the end of the story, because the hyperbolic graphs contain isometrically embedded curve graphs, and so are not all quasitrees.

Nevertheless, Hume observes \cite{hume:embedding} that these hyperbolic graphs can themselves be quasiisometrically embedded in finite products of rooted trees by a construction of Buyalo \cite{buyalo:capacity}. We strengthen this in Section~\ref{section:buyalo} by using a variant construction from \cite{buyalodranishnikovschroeder:embedding} to give an exact characterisation of which hyperbolic spaces admit quasimedian quasiisometric embeddings in finite products of trees. This leads to Theorem~\ref{mthm:hyp_asdim_ccc}, and we observe that the above hyperbolic graphs meet the necessary criteria.

Briefly, given a hyperbolic graph $X$, the trees forming the finite product are built from a sequence of covers of the Gromov boundary of $X$. Each cover is by balls, with radii decaying exponentially along the sequence of covers, and the sequence is coherent in the sense that balls from different terms are either disjoint or nested. The levels of the rooted trees correspond to different terms in the sequence. In order to show that the embedding of $X$ in the product of these trees is quasimedian, we use coherence of the sequence and Lemma~\ref{lem:partial_quasimorphism}, a simplified criterion for a map of hyperbolic spaces to be quasimedian.

\medskip

\textbf{Proof B.} The construction from \cite{bestvinabrombergfujiwara:proper} that is used in the proof in Section~\ref{section:mainresult} also begins with a quasitree of metric spaces, but a more refined collection of metric spaces is involved. More specifically, in \cite{bestvinabrombergfujiwara:proper}, Bestvina--Bromberg--Fujiwara show how to ``decompose'' the curve graph $\C S$ into finitely many orbits of quasigeodesics in such a way that a quasitree of metric spaces can be built for each orbit. These quasitrees of quasigeodesics are themselves quasitrees. By repeating this decomposition in each subsurface and using the colouring from \cite{bestvinabrombergfujiwara:constructing}, they are able to obtain a finite collection of quasitrees that is sufficiently rich for a finite index subgroup $H$ of $\mcg(S)$ to act properly on their product \cite{bestvinabrombergfujiwara:proper}. The representation of $\mcg(S)$ induced by this action of $H$ corresponds to a proper action of $\mcg(S)$ on a finite product of quasitrees.

Because we cannot possibly end up with an equivariant quasiisometry to a CAT(0) cube complex, we can just work with the embedding of $\mcg(S)$ coming from approximating it by its finite-index subgroup $H$ and replacing the finitely many quasitrees by trees. The goal of Section~\ref{section:mainresult} is therefore to prove the quasimedian property for the orbit maps of $H$ on the quasitrees of quasigeodesics.

The strategy for this is to show that orbit maps send \emph{hierarchy paths} in $\mcg(S)$ to paths that project to unparametrised quasigeodesics in each quasitree factor. To see why this is sufficient, note that there is a hierarchy-path triangle $\Delta$ in $\mcg(S)$ with all sides passing through the coarse median $m$ (Lemma~\ref{lem:medianonanhp}). Moreover, the median in the product of quasitrees is just the component-wise median. If the images of the edges of $\Delta$ project to unparametrised quasigeodesics in a quasitree, then the Morse lemma implies that the image of $m$ is uniformly close to all three sides of a geodesic triangle therein, and hence to the median in that quasitree. Knowing that this holds for every quasitree factor gives the quasimedian property.


\subsubsection*{\emph{\textbf{Acknowledgements.}}} ~

I am extremely grateful to my PhD supervisor, Mark Hagen, for sharing his insight and enthusiasm, and for carefully reading several drafts of this article, which greatly benefited it. I thank Matt Durham for informing me about \cite{hamenstadt:geometry:2a}, for a remark about \cite{durhamminskysisto:stable}, and for discussions relating to Teichm\"uller space. I thank Jingyin Huang for his generous explanations about lattices and rigidity relating to Theorem~\ref{thm:Q_weird}. I am grateful to Davide Spriano and Abdul Zalloum for numerous comments that improved the exposition considerably. I would also like to thank Jason Behrstock, Montse Casals-Ruiz, Indira Chatterji, and Ilya Kazachkov for interesting conversations related to this work. I thank the referees for their useful comments and careful reading. This work was supported by an EPSRC DTP at the University of Bristol, studentship 2123260.

\section{Preliminaries} \label{section:preliminaries}
\subsection{Hierarchy structure and notation for mapping class groups} \label{subsection:hierarchy}

Let us start by reviewing some of the hierarchy structure of the mapping class group. This viewpoint was originally developed by Masur--Minsky \cite{masurminsky:geometry:1, masurminsky:geometry:2}, and more recently was axiomatised by Behrstock--Hagen--Sisto with their definition of the classes of hierarchically hyperbolic spaces and groups \cite{behrstockhagensisto:hierarchically:1, behrstockhagensisto:hierarchically:2}. 

Let $S=S_{g,p}$ be the orientable surface of genus $g$ with $p$ marked points. If $3g+p\leq 4$, then $S$ is said to be \emph{sporadic}. The (extended) mapping class group of $S$, denoted $\mcg(S)$, is the group of isotopy classes of (not necessarily orientation-preserving) homeomorphisms of $S$. If $S$ is sporadic, then $\mcg(S)$ is hyperbolic, so we restrict attention to the case where $S$ is not sporadic. It is a classical theorem of Dehn that $\mcg(S)$ is finitely generated \cite{dehn:papers} (in fact it is 2--generated if $g\ge3$ \cite{monden:onminimal} or $p\le1$ \cite{korkmaz:generating}). Fix once and for all a finite generating set for $\mcg(S)$, and let $\dist$ be the corresponding word metric.

The \emph{curve graph} of $S$, introduced by Harvey in \cite{harvey:boundary} and denoted $\C S$, is the graph whose vertex set is the set of isotopy classes of essential simple closed curves on $S$, with an edge joining two vertices if the corresponding classes have disjoint representatives. This graph was shown to be unbounded and hyperbolic by Masur--Minsky \cite{masurminsky:geometry:1}, and since then there have been several articles proving that the hyperbolicity constant is independent of the surface \cite{aougab:uniform, bowditch:uniform, clayrafischleimer:uniform, henselprzytyckiwebb:1-slim}. (This is also true for nonorientable surfaces \cite{kuno:uniform}.) By definition, $\mcg(S)$ acts on $\C S$ by graph automorphisms.

Now let $U$ be an essential proper subsurface of $S$. For simplicity, let us assume that $U$ is not sporadic; see \cite{masurminsky:geometry:2} for how to proceed otherwise. Since $U$ is also a surface, it has a curve graph $\C U$, and this appears as a subgraph of $\C S$, but has diameter at most $2$ in $\C S$ because there is a curve in $S$ that is disjoint from $U$. This means that $\C S$ does not see any information that lives only in $U$. The idea of the Masur--Minsky hierarchy is to overcome this by considering the curve graphs of all subsurfaces of $S$, rather than just $\C S$.

Let $\s$ be the set of all isotopy classes of (possibly sporadic) connected, essential, non-pants subsurfaces of $S$. (For technical reasons, disconnected subsurfaces need to be included in the setting of hierarchical hyperbolicity, but for this article they can be ignored.) Given two (isotopy classes of) subsurfaces $U$ and $V$ in $\s$, there are three ways that the geometries of their curve graphs can interact, based on the configuration of the subsurfaces. They can be disjoint; they can overlap, in which case we write $U\trans V$; or one can be entirely contained in the other, and we write $U\nest V$ if $U$ is a subsurface of $V$. 

If $U\pnest V$, then there is an associated bounded set $\rho^U_V\subset \C V$. In the case where $U$ is not sporadic, this is just the subgraph $\C U \subset \C V$. There is also a bounded set $\rho^U_V\subset\C V$ if $U\trans V$; in this case it is the subgraph $\C(U\cap V)$.

More generally, given a simple closed curve $c$ in $S$ that intersects a subsurface $U$, there is a natural way to define a \emph{projection} of the curve to $\C U$. If $U$ is not an annulus (see \cite[\S2.4]{masurminsky:geometry:2} for the case where $U$ is an annulus), then the image of $c$ is a collection of curves obtained by intersecting $c$ with $U$ and ``closing up'' any loose ends with subsegments of the boundary of $U$. This collection forms a bounded diameter subset of $\C U$. Now fix a marking $m$ of $S$ (see \cite{behrstockkleinerminskymosher:geometry} for background on markings). Every subsurface of $S$ meets $m$. We can use this marking to define, for any subsurface $U\in\s$, a map from $\mcg(S)$ to $\C U$ as follows. Given a mapping class $g$, the subsurface $U$ meets at least one curve that makes up $gm$. For each such curve, take the projection to $\C U$, and define $\pi_U(g)\subset\C U$ to be the bounded set obtained by taking the union of these projections. We call the (set-valued) map $\pi_U:\mcg(S)\to\C U$ a \emph{projection map}. Projection maps are coarsely Lipschitz, though we shall not use this fact directly.

Mapping classes permute the subsurfaces of $S$, and every $g\in\mcg(S)$ induces an isometry $g:\C U\to\C gU$ for each subsurface $U$. By construction, the bounded $\rho$--sets and the projection maps satisfy $g\rho^U_V=\rho^{gU}_{gV}$ and $g\pi_U(g')=\pi_{gU}(gg')$.

The projection maps allow one to view curve graphs of subsurfaces of $S$ as providing coordinates for $\mcg(S)$, by associating with $g$ the tuple $(\pi_U(g))_{U\in\s}$. There are restrictions on the values these coordinates can take coming from the relations between subsurfaces. For instance, Behrstock showed in \cite{behrstock:asymptotic} that if $U$ and $V$ overlap then at most one of $\dist_{\C U}(\pi_U(g),\rho^V_U)$ and $\dist_{\C V}(\pi_V(g),\rho^U_V)$ is greater than some fixed constant that depends only on $S$. This is known as the Behrstock inequality; an elementary proof can be found in \cite{mangahas:recipe}. A similar statement holds when $U\pnest V$ (see \cite[Thm~4.4]{behrstock:asymptotic}), but there is no such restriction when $U$ and $V$ are disjoint.

An important aspect of the hierarchy structure of the mapping class group is the fact that any set of coordinates $x=(x_U)_{U\in\s}$ satisfying Behrstock's inequalities is ``realised'' by a point of $\mcg(S)$ \cite[Thm~4.3]{behrstockkleinerminskymosher:geometry}. That is, there is some $g\in\mcg(S)$ such that $(\pi_U(g))_{U\in\s}$ is uniformly close to $x$ in the supremum metric. This is extremely useful, because it allows one to construct points in $\mcg(S)$ by working only in curve graphs, which are hyperbolic. For example, Behrstock--Minsky used this to construct what they called a \emph{centroid} for a triple of mapping classes \cite{behrstockminsky:centroids}. The idea is as follows. For each subsurface $U$, project the triple to $\C U$ and let $\mu_U$ be the coarse centre in that hyperbolic space. This gives coordinates $(\mu_U)_{U\in\s}$, and it turns out that they satisfy Behrstock's inequalities, so there is some mapping class $\mu$ that realises the tuple $(\mu_U)_{U\in\s}$. This point $\mu$ is declared to be the centroid of the triple.

The centroid construction motivated Bowditch to introduce \emph{coarse median spaces} \cite{bowditch:coarse}, which cover many classes of nonpositively curved spaces besides mapping class groups. For this reason, the centroid is now more commonly called the \emph{coarse median}. Coarse median spaces will be reviewed in Section~\ref{subsection:coarsemedian}. 

To summarise, $\mcg(S)$ has the following structure.
\begin{itemize}
\item   Each subsurface $U\in\s$ has an associated uniformly hyperbolic graph $\C U$.
\item   There is a projection map $\pi_U:\mcg(S)\to\C U$ that sends mapping classes to uniformly bounded subsets of $\C U$. 
\item   If $U\trans V$ or if $U\pnest V$, then there is an associated subset $\rho^U_V\subset\C V$, and this subset is uniformly bounded.
\item   There is equivariance of the form $g\rho^U_V=\rho^{gU}_{gV}$ and $g\pi_U(g')=\pi_{gU}(gg')$.
\item   (Behrstock inequality:) There is a uniform bound on $\min\{\dist_{\C U}(\pi_U(g),\rho^V_U),\dist_{\C V}(\pi_V(g),\rho^U_V)\}$ for $U\trans V$ and $g\in\mcg(S)$.
\item   The coarse median of $g_1, g_2, g_3\in\mcg(S)$ is a mapping class $\mu(g_1,g_2,g_3)$ with the property that, for any $U\in\s$, the projection $\pi_U(\mu(g_1,g_2,g_3))$ is uniformly close to the coarse centre of $\pi_U(g_1)$, $\pi_U(g_2)$, and $\pi_U(g_3)$ in the hyperbolic graph $\C U$.
\end{itemize}

\subsubsection*{\emph{\textbf{Hierarchy constant}}} ~

The hierarchy structure of the mapping class group involves various constants that depend only on the topological type of $S$, some of which have been mentioned above. Fix a constant $E=E(S)\geq1$ that is at least as large as all of these. We think of $E$ as actually being part of the hierarchy structure; see \cite[Rem.~1.6]{behrstockhagensisto:hierarchically:2} for a more explicit description of how $E$ is chosen.

\subsubsection*{\emph{\textbf{Paths and product regions}}} ~

There is a collection of natural paths in the mapping class group, called \emph{hierarchy paths}, that interact well with the hierarchy structure. These abstract the key properties of what were originally called \emph{hierarchies} in \cite{masurminsky:geometry:2}. For $D\ge1$, a \emph{$D$--quasigeodesic} is a $(D,D)$--quasiisometrically embedded interval or line.

\begin{definition}[Hierarchy path]
For a constant $D\geq1$, a $D$--hierarchy path is a $D$--quasigeodesic $\gamma$ in $\mcg(S)$ such that for any subsurface $U\nest S$, the projection $\pi_U\gamma$ is an unparametrised $D$--quasigeodesic.
\end{definition}

Masur--Minsky showed that every pair of mapping classes can be joined by a hierarchy path \cite[Thm~4.6]{masurminsky:geometry:2}. The following is a simple consequence of \cite[Prop.~5.6]{russellsprianotran:convexity}.

\begin{lemma} \label{lem:medianonanhp}
There is a constant $D_1$ such that for any $x_1,x_2,x_3\in\mcg(S)$, there are $D_1$--hierarchy paths $\beta_{ij}$ from $x_i$ to $x_j$ that pass through $\mu(x_1,x_2,x_3)$.
\end{lemma}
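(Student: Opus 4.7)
The plan is to realise each $\beta_{ij}$ as a concatenation, at $\mu:=\mu(x_1,x_2,x_3)$, of two Masur--Minsky hierarchy paths. By \cite{masurminsky:geometry:2} there is a uniform constant $D=D(S)$ such that any two points of $\mcg(S)$ are joined by a $D$--hierarchy path. For each $k\in\{1,2,3\}$, choose such a path $\alpha_k$ from $\mu$ to $x_k$, and set $\beta_{ij}:=\alpha_i^{-1}\cdot\alpha_j$. The task is then to promote the constants to show that $\beta_{ij}$ is a $D_1$--hierarchy path for some uniform $D_1$.

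For the subsurface projection axiom, fix $U\nest S$. The paths $\pi_U(\alpha_i)$ and $\pi_U(\alpha_j)$ are unparametrised $D$--quasigeodesics in the uniformly hyperbolic graph $\C U$, joining $\pi_U(x_i)\to\pi_U(\mu)$ and $\pi_U(\mu)\to\pi_U(x_j)$ respectively. By the defining property of the coarse median recalled above, $\pi_U(\mu)$ lies within a uniform distance of a coarse centre of the triangle $\{\pi_U(x_1),\pi_U(x_2),\pi_U(x_3)\}$ in $\C U$, and hence within a uniform distance of any geodesic between $\pi_U(x_i)$ and $\pi_U(x_j)$. A standard application of the Morse lemma and thin triangles in $\delta$--hyperbolic spaces then shows that the concatenation of two unparametrised quasigeodesics through such a ``near-central'' point is again an unparametrised quasigeodesic, with constants depending only on $D$, the hyperbolicity constant of $\C U$, and the centroid constant---all of which are bounded by the hierarchy constant $E$. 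Thus $\pi_U(\beta_{ij})$ is an unparametrised $D'$--quasigeodesic with $D'=D'(E,D)$ uniform in $U$.

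For the $\mcg(S)$--quasigeodesic condition, I would invoke the Masur--Minsky distance formula: $\dist(x,y)$ is comparable, up to uniform multiplicative and additive error, to $\sum_{U\in\s}\ignore{\dist_{\C U}(\pi_U(x),\pi_U(y))}{K}$ for threshold $K\gg E$. The near-central property of $\pi_U(\mu)$ in each $\C U$ yields
\[\dist_{\C U}(\pi_U(x_i),\pi_U(\mu))+\dist_{\C U}(\pi_U(\mu),\pi_U(x_j))\le\dist_{\C U}(\pi_U(x_i),\pi_U(x_j))+O(1),\]
which, summed through the distance formula, gives $\dist(x_i,\mu)+\dist(\mu,x_j)\lesssim \dist(x_i,x_j)$. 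Since each $\alpha_k$ is already a $D$--quasigeodesic, this upper bound on the combined length shows that $\beta_{ij}$ is a $D_1$--quasigeodesic in $\mcg(S)$ with $D_1$ depending only on $S$.

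The main obstacle is the careful bookkeeping of constants in the concatenation step of the second paragraph: one has to ensure that the quasigeodesic constants for $\pi_U(\beta_{ij})$ are genuinely independent of $U$, so that a single $D_1$ works for all subsurfaces simultaneously. This is handled by exploiting the uniform hyperbolicity of curve graphs and the uniform centroid constant encoded in $E$, both of which are built into the hierarchy structure. Once those constants are pinned down, the rest of the verification is a direct assembly of the hierarchy-path axioms.
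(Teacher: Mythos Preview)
Your argument is correct and is essentially the direct proof that underlies the cited result. The paper itself does not give a proof: it simply records that the lemma is ``a simple consequence of \cite[Prop.~5.6]{russellsprianotran:convexity}''. That proposition is a packaging, in the language of hierarchical quasiconvexity, of exactly the concatenation-at-the-median idea you carry out by hand. So your route is the explicit version of what the paper outsources to a citation; the trade-off is that you do the constant bookkeeping yourself, while the paper gets a one-line justification.

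One small point to tighten: in your third paragraph you verify $\dist(x_i,\mu)+\dist(\mu,x_j)\lesssim\dist(x_i,x_j)$ only for the \emph{endpoints} of $\beta_{ij}$. To conclude that the concatenation is a quasigeodesic in $\mcg(S)$ you need the analogous bound for arbitrary $p\in\alpha_i$ and $q\in\alpha_j$, namely $\dist(p,\mu)+\dist(\mu,q)\lesssim\dist(p,q)$. This follows by the same distance-formula computation once you use, from your second paragraph, that $\pi_U(\beta_{ij})$ is already a uniform unparametrised quasigeodesic with $\pi_U(\mu)$ lying between $\pi_U(p)$ and $\pi_U(q)$ along it; that gives
\[
\dist_{\C U}(\pi_U(p),\pi_U(\mu))+\dist_{\C U}(\pi_U(\mu),\pi_U(q))\le\dist_{\C U}(\pi_U(p),\pi_U(q))+O(1)
\]
for every $U$, and you sum through the distance formula exactly as before. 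With that adjustment the argument is complete.
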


Given $g,h\in\mcg(S)$, there are some restrictions on what the hierarchy paths from $g$ to $h$ can look like. In particular, they must ``pass through'' subsurfaces in a particular order; see \cite[Prop.~3.6]{clayleiningermangahas:geometry}. Since the projections of a $D$--hierarchy path are $D$--quasigeodesics, this only makes sense for subsurfaces where $g$ and $h$ are far apart.

\begin{definition}[Relevant subsurface, partial ordering] \label{definition:ordering}
Given $g,h\in\mcg(S)$ and a constant $R\geq20$, we say that a subsurface $U$ is $R$--relevant for the pair $(g,h)$ if $\dist_{\C U}(\pi_U(g),\pi_U(h))\geq R$. We write $\rel_R(g,h)$ for the set of $R$--relevant subsurfaces, and give it a strict partial order \cite[Lem.~4.5]{behrstockkleinerminskymosher:geometry} by setting $U<V$ whenever $U\trans V$ and $\dist_{\C V}(\pi_V(g),\rho^U_V)\leq4$.
\end{definition}




\begin{definition}[Standard product region]
For a subsurface $U$ of $S$, the standard product region of $U$, denoted $P_U$, is the set of all $g\in\mcg(S)$ that satisfy $\dist_{\C V}(\pi_V(g),\rho^U_V)\leq E$ for every subsurface $V\in\s$ with either $V\trans U$ or $U\pnest V$.
\end{definition}


\subsection{Coarse medians} \label{subsection:coarsemedian}

Coarse median spaces were introduced by Bowditch in \cite{bowditch:coarse}, and the class includes many examples of interest, such as mapping class groups, hyperbolic spaces, Teichm\"uller space with either of the usual metrics, CAT(0) cube complexes, and  hierarchically hyperbolic spaces \cite{bowditch:coarse, niblowrightzhang:four, bowditch:large:teichmuller, bowditch:large:weil, behrstockhagensisto:hierarchically:2}. These spaces exhibit a weak kind of nonpositive curvature. Indeed, Bowditch gave a particularly elegant proof that they satisfy a quadratic isoperimetric inequality \cite[Prop.~8.2]{bowditch:coarse}. 

Although we shall not use it directly, the definition of a coarse median space has been included for completeness; see \cite{bowditch:coarse,bowditch:notes}. Recall that the median of three vertices in a finite-dimensional CAT(0) cube complex is the unique vertex lying on an $\ell^1$--geodesic between each pair. 

\begin{definition}[Coarse median space]
A metric space $(X,\dist)$ is a coarse median space if there is a map $\mu:X^3\to X$ and a function $h$ such that the following conditions hold.
\begin{itemize}
\item   For any $x,x',y,y',z,z'\in X$ we have
\[  \dist(\mu(x,y,z),\mu(x',y',z')) \le h(1)(1+\dist(x,x')+\dist(y,y')+\dist(z,z')). \] 
\item   For all $n\in\mathbf N$, if $A\subset X$ has cardinality at most $n$, then there is a finite CAT(0) cube complex $Q$ with maps $f:A\to Q$ and $\bar f:Q\to X$ such that
\begin{align*}
&-\hspace{2mm} \dist\big(\bar f\mu(v_1,v_2,v_3),\mu(\bar f(v_1),\bar f(v_2),\bar f(v_3))\big)\le h(n) \text{ for all } v_1,v_2,v_3\in Q; \\
&-\hspace{2mm} \dist(a,\bar ff(a))\le h(n) \text{ for all } a\in A.
\end{align*}
\end{itemize}
\end{definition}

More important for us will be the notion of a \emph{quasimedian map}, which is the natural morphism for the setting of coarse median spaces.

\begin{definition}[Quasimedian]
Let $(X,\dist_X,\mu_X)$ and $(Y,\dist_Y,\mu_Y)$ be coarse median spaces. A map $\phi:X\to Y$ is quasimedian if there is a constant $k$ such that 
\[ \dist_Y\big(\phi\mu_X(x_1,x_2,x_3),\mu_Y(\phi(x_1),\phi(x_2),\phi(x_3))\big) \le k \]
holds for all $x_1,x_2,x_3\in X$. 
\end{definition}

We say that a subset $Y$ of a metric space $X$ is \emph{coarsely connected} if there is a constant~$r$ such that for any $y,y'\in Y$ there is a sequence $y=y_0,y_1,\dots,y_n=y'$ with $\dist(y_{i-1},y_i)\le r$ for all $i\le n$. If we can take $r=1$, then we say that $Y$ is \emph{1--connected}. The proofs of our main results rely in an essential way on the following, which is based on an result due independently to Bowditch \cite[\S4]{bowditch:convex} and Fioravanti \cite[Prop.~4.1]{fioravanti:coarse}. 

\begin{proposition}[{\cite[Prop.~2.12]{hagenpetyt:projection}}] \label{proposition:hagenpetyt2.11}
Any coarsely connected coarse median space $X$ that admits a quasimedian quasiisometric embedding $\Phi$ in a finite-dimensional CAT(0) cube complex $Q$ is quasimedian quasiisometric to a finite-dimensional CAT(0) cube~complex.
\end{proposition}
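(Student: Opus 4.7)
The plan is to produce the required cube complex as a convex subcomplex of $Q$ in which $\Phi(X)$ is coarsely dense, so that $\Phi$ itself becomes the quasimedian quasi-isometry. As a harmless preliminary, I would compose $\Phi$ with the closest-vertex projection $Q\to Q^{(0)}$; since $Q$ is finite-dimensional, this projection is coarsely Lipschitz and coarsely median, so the resulting map $\Phi_0:X\to Q^{(0)}$ is still a quasimedian quasi-isometric embedding.

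Next, take $Q'$ to be the combinatorial convex hull of $\Phi_0(X)$ in $Q$, i.e.\ the cubical complex supported on the median subalgebra of $Q^{(0)}$ generated by $\Phi_0(X)$, or equivalently the intersection of all combinatorial halfspaces of $Q$ that contain $\Phi_0(X)$. Convex subcomplexes of CAT(0) cube complexes are themselves CAT(0) cube complexes of no larger dimension, and their inclusions are median-preserving isometric embeddings (for the $\ell^1$ metrics), so $Q'$ is a finite-dimensional CAT(0) cube complex and the corestriction $\Phi_0:X\to Q'$ is still quasimedian and a quasi-isometric embedding with the same constants.

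The remaining step, and the heart of the proof, is coarse density of $\Phi_0(X)$ in $Q'$. The quasimedian hypothesis says exactly that $\Phi_0(X)$ is coarsely closed under the median of $Q$ with some constant $k$. The tool that converts this pointwise defect into a uniform bound on the distance from $Q'$ to $\Phi_0(X)$ is the convex-hull approximation result of Bowditch~\cite[\S 4]{bowditch:convex} and Fioravanti~\cite[Prop.~4.1]{fioravanti:coarse}: in a finite-dimensional CAT(0) cube complex, a coarsely median-closed subset lies in a uniform neighborhood of the convex hull it generates, with constants depending only on the dimension and the coarse-median defect. The main obstacle is adapting their argument, which is naturally stated for finite subsets, to the (possibly infinite) set $\Phi_0(X)$; here one exploits coarse connectedness of $X$ together with the quasi-isometric-embedding control to prevent a ``slow drift'' of $\Phi_0(X)$ inside its hull. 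I would carry this out by induction on $\dim Q'$, using that gate projections onto the hyperplane carriers of $Q'$ produce convex subcomplexes of strictly smaller dimension on which the projected image of $\Phi_0(X)$ inherits a quasimedian, coarsely median-closed structure.

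Once coarse density of $\Phi_0(X)$ in $Q'$ is in hand, $\Phi_0:X\to Q'$ is a quasi-isometry, with constants controlled by those of the original quasi-isometric embedding together with the neighborhood bound coming from the hull approximation, and it remains quasimedian because $Q'\hookrightarrow Q$ is median-preserving. This $Q'$ is therefore the finite-dimensional CAT(0) cube complex required by the statement.
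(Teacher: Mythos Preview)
There is a genuine gap: you have conflated the \emph{median subalgebra} generated by $\Phi_0(X)$ with its \emph{convex hull} (the intersection of halfspaces containing it), and the coarse-density claim you need is false for the latter. Take $X=\mathbf Z$ and $\Phi(n)=(n,n)\in\mathbf Z^2=Q$. This is a median-preserving $(2,0)$--quasiisometric embedding of a coarsely connected space, so all hypotheses hold; but the convex hull of the diagonal is all of $\mathbf Z^2$, since no combinatorial halfspace of $\mathbf Z^2$ contains the whole diagonal. Thus $\Phi_0(X)$ is certainly not coarsely dense in its convex hull, and no induction on dimension will fix this. The Bowditch/Fioravanti results you cite do not assert coarse density in the convex hull; they concern approximation by a genuine median subalgebra, which is a much smaller object.

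The paper's proof takes exactly this smaller object. From the quasimedian property, $\Phi(X)$ is coarsely closed under the median of $Q$; one then invokes \cite[Prop.~2.8]{hagenpetyt:projection} to find a \emph{1--connected median subalgebra} $M\subset Q^0$ at bounded Hausdorff distance from $\Phi(X)$. The point is that such an $M$ is isometrically embedded in $Q^0$ (this is the Bowditch fact), so the full subcomplex on $M$ is itself a finite-dimensional CAT(0) cube complex, and $\Phi$ is a quasiisometry onto it. In the diagonal example, $M$ is a staircase, which is a copy of $\mathbf Z$---not the ambient $\mathbf Z^2$. Your outline becomes correct if you replace ``convex hull'' throughout by ``nearby 1--connected median subalgebra'' and drop the density-in-hull step in favour of this citation.
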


\begin{proof}
The finite-dimensionality of $Q$ lets us perturb $\Phi$ to map into the 0--skeleton $Q^0\subset Q$, and the image is coarsely connected. Since $\Phi$ is quasimedian, the median of any three points of $\Phi(X)$ lies uniformly close to $\Phi(X)$. According to \cite[Prop.~2.8]{hagenpetyt:projection}, this shows that $\Phi(X)$ is at bounded Hausdorff-distance from a 1--connected median subalgebra $M\subset Q^0$. As mentioned in \cite[\S2]{bowditch:convex}, 1--connected median subalgebras of $Q^0$ are isometrically embedded, so $\Phi$ is a quasiisometry from $X$ to the CAT(0) subcomplex of $Q$ whose 0--skeleton is $M$, again relying on finite-dimensionality.
\end{proof}

Let us now state a few useful facts about coarse medians and hyperbolicity.

\begin{proposition}[{\cite[Thm~4.2]{niblowrightzhang:four}}] \label{prop:hyperbolic_median_unique}
Coarse medians on hyperbolic spaces are unique up to bounded error.
\end{proposition}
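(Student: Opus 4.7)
The plan is to show that on any $\delta$--hyperbolic space $X$ every coarse median $\mu$ sends a triple $(x_1,x_2,x_3)$ to a point that lies uniformly close to each side of the geodesic triangle with vertices $x_1,x_2,x_3$. Since the set of points close to all three sides of a $\delta$--thin triangle has diameter controlled by $\delta$ and the neighbourhood constant, this pins $\mu(x_1,x_2,x_3)$ down to a bounded region that depends only on $X$ and the coarse median parameters. Consequently, two coarse medians $\mu,\mu'$ on the same hyperbolic space assign values to every triple that both lie in this bounded region, and hence differ by a uniformly bounded amount.

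To execute this, I first apply the finite-approximation axiom to the three-point set $A=\{x_1,x_2,x_3\}$, obtaining a finite CAT(0) cube complex $Q$ together with $f\colon A\to Q$ and $\bar f\colon Q\to X$ with $\bar f f\approx\mathrm{id}_A$ and $\bar f$ coarsely preserving the median on triples of vertices in $Q$. Let $m:=\mu_Q(f(x_1),f(x_2),f(x_3))$, so $m$ lies on an $\ell^1$--geodesic from $f(x_i)$ to $f(x_j)$ in $Q$ for every pair $i\ne j$, and $\bar f(m)$ lies within bounded distance of $\mu(x_1,x_2,x_3)$. I then verify that $\bar f$ is coarsely Lipschitz: using the Lipschitz-in-each-variable condition on $\mu$ (first axiom) and applying the finite-approximation axiom on pairs to conclude $\mu(y,z,z)\approx z$, one shows that adjacent vertices of $Q$ have uniformly close images under $\bar f$. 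Combined with the median-preservation of $\bar f$ and the fact that $m$ sits on $\ell^1$--geodesics through $f(x_i),f(x_j)$, this implies that the image of each such segment is a uniform quasigeodesic from (a point close to) $x_i$ to (a point close to) $x_j$.

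The Morse lemma in $X$ then places this quasigeodesic in a uniform neighbourhood of any geodesic $[x_i,x_j]$. In particular, $\bar f(m)$, and hence $\mu(x_1,x_2,x_3)$, lies within a uniform distance $R=R(\delta,h)$ of each of the three geodesic sides. By $\delta$--thinness of geodesic triangles, the intersection of the $R$--neighbourhoods of the three sides is contained in a ball of diameter bounded by a function of $R$ and $\delta$, so $\mu(x_1,x_2,x_3)$ is determined up to this bound. Applying the same argument to a second coarse median $\mu'$ places $\mu'(x_1,x_2,x_3)$ in the same bounded set, which yields the required uniform bound on $\dist(\mu(x_1,x_2,x_3),\mu'(x_1,x_2,x_3))$.

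The main obstacle is deriving the coarse Lipschitzness of $\bar f$ from the stated axioms, since this is not listed as an axiom but is needed in order to invoke the Morse lemma on the image of an $\ell^1$--geodesic in $Q$; once that is in hand, the remainder is standard thin-triangle geometry in a hyperbolic space.
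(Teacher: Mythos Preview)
The paper does not prove this proposition; it merely cites \cite{niblowrightzhang:four}. Your overall strategy---show that any coarse median value $\mu(x_1,x_2,x_3)$ lies uniformly close to all three sides of the geodesic triangle, then use thinness---is the standard one and is correct in outline.

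The genuine gap is exactly where you flag it, and your proposed fix does not work. From the approximation axiom applied to a pair one can indeed derive $\mu(y,z,z)\approx z$, but feeding this back into the quasimedian property of $\bar f$ on adjacent vertices $v,v'$ of $Q$ gives only
\[
\bar f(v') \;\approx\; \mu\big(\bar f(v),\bar f(v'),\bar f(v')\big) \;\approx\; \bar f(v'),
\]
which is vacuous. If instead you try to use the first (Lipschitz) axiom directly, comparing $\mu(\bar f v,\bar f v',\bar f v)$ with $\mu(\bar f v,\bar f v',\bar f v')$ yields
\[
\dist(\bar f v,\bar f v') \;\le\; 2h(n) + h(1)\big(1+\dist(\bar f v,\bar f v')\big),
\]
which says nothing unless $h(1)<1$, and the definition imposes no such bound. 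So with the axioms exactly as written in this paper, coarse Lipschitzness of $\bar f$ is not available for free.

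There is a second, related issue: even granting that $\bar f$ is coarsely Lipschitz, this only makes the image of an $\ell^1$--geodesic a Lipschitz \emph{path}, not a quasigeodesic; you also need a lower bound, i.e.\ that $\bar f$ is a quasiisometric embedding. The Morse lemma does not apply to arbitrary Lipschitz paths. You do not supply this lower bound, and the ``median-preservation'' of $\bar f$ gives only that $\bar f(p_k)\approx\mu(x_i,x_j,\bar f(p_k))$ for points $p_k$ on the cube-complex geodesic, which is circular for present purposes.

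In Bowditch's original treatment the maps $f,\bar f$ can be taken (after a nontrivial lemma) to be quasiisometric embeddings, and the Niblo--Wright--Zhang paper works with an equivalent four-point axiomatisation that sidesteps the issue entirely. Either route supplies exactly the metric control your argument is missing; this is why the paper simply defers to the citation.
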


\begin{lemma} \label{lem:qieimpliesqm}
If $X$ and $Y$ are hyperbolic spaces, then any map $\phi:X\to Y$ that sends geodesics to uniform unparametrised quasigeodesics is quasimedian. In particular, any quasiisometric embedding of hyperbolic spaces is quasimedian.
\end{lemma}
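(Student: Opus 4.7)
The plan is to show that $\phi$ is quasimedian by verifying that $\phi(\mu_X(x_1,x_2,x_3))$ is a coarse median of the three image points in $Y$, then invoking uniqueness of coarse medians in hyperbolic spaces (Proposition~\ref{prop:hyperbolic_median_unique}) to identify it with $\mu_Y(\phi(x_1),\phi(x_2),\phi(x_3))$.

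First I would use Proposition~\ref{prop:hyperbolic_median_unique} to choose the coarse median conveniently: in a $\delta_X$-hyperbolic space, one can take $m:=\mu_X(x_1,x_2,x_3)$ to lie on a specific geodesic $[x_1,x_2]$ and to be $O(\delta_X)$-close to chosen geodesics $[x_1,x_3]$ and $[x_2,x_3]$ (the standard thin-triangle / entry-point picture). The hypothesis then says that each $\phi([x_i,x_j])$ is a uniform unparametrised $D$-quasigeodesic in $Y$, so the Morse lemma puts it in a uniform Hausdorff neighbourhood of any geodesic $[\phi(x_i),\phi(x_j)]$. In particular, $\phi(m)\in\phi([x_1,x_2])$ is automatically uniformly close to $[\phi(x_1),\phi(x_2)]$; the task reduces to the other two sides.

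The hard part is to transfer the fact that $m$ is close in $X$ to $[x_1,x_3]$ and $[x_2,x_3]$ into an analogous statement for $\phi(m)$ in $Y$. My plan is to subdivide $[x_1,x_2]$ at $m$ and pick a geodesic $[m,x_3]$ in $X$, so that $\phi([x_1,m])$ and $\phi([m,x_3])$ are unparametrised $D$-quasigeodesics meeting at $\phi(m)$. Because $m$ is $O(\delta_X)$-close to $[x_1,x_3]$, we have $d_X(x_1,m)+d_X(m,x_3)\leq d_X(x_1,x_3)+O(\delta_X)$, so the broken path $[x_1,m]*[m,x_3]$ is a $(1,O(\delta_X))$-quasigeodesic in $X$. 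Comparing the broken path $\phi([x_1,m])\cup\phi([m,x_3])$ with the unparametrised quasigeodesic $\phi([x_1,x_3])$ (both share endpoints $\phi(x_1),\phi(x_3)$) and applying the Morse lemma in $Y$, I would conclude that $\phi(m)$ lies in a uniform neighbourhood of $[\phi(x_1),\phi(x_3)]$. The symmetric argument handles $[\phi(x_2),\phi(x_3)]$.

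Having $\phi(m)$ uniformly close to all three sides of a geodesic triangle on $\phi(x_1),\phi(x_2),\phi(x_3)$ characterises it as a coarse median in the hyperbolic space $Y$, so Proposition~\ref{prop:hyperbolic_median_unique} gives $d_Y(\phi(m),\mu_Y(\phi(x_1),\phi(x_2),\phi(x_3)))=O(1)$, which is the quasimedian property. The ``in particular'' clause is immediate, as any quasiisometric embedding between geodesic metric spaces sends each geodesic to a quasigeodesic with uniform constants, hence to an unparametrised quasigeodesic.
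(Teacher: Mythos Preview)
Your approach matches the paper's: both route uniform quasigeodesics through the median $m$ in $X$, apply $\phi$, and invoke the Morse lemma to place $\phi(m)$ near all three sides of the image triangle. The paper is terser---it simply takes ``a uniform quasigeodesic $\gamma_{ij}$ from $x_i$ to $x_j$ that passes through $m$'' and asserts that ``$\phi\gamma_{ij}$ is a uniform unparametrised quasigeodesic''---whereas you spell out the broken-path construction explicitly.

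There is, however, a step that does not follow from the stated hypothesis, and it is the same step the paper elides. You write that by ``comparing the broken path $\phi([x_1,m])\cup\phi([m,x_3])$ with the unparametrised quasigeodesic $\phi([x_1,x_3])$ \dots and applying the Morse lemma'' you can conclude that $\phi(m)$ lies near $[\phi(x_1),\phi(x_3)]$. But the Morse lemma compares a quasigeodesic to a geodesic; it says nothing about two arbitrary paths sharing endpoints. To apply it to the broken path you would need $\phi([x_1,m])\cup\phi([m,x_3])$ to be a uniform quasigeodesic in $Y$, and a concatenation of two quasigeodesics is a quasigeodesic precisely when the middle vertex is close to the geodesic between the outer ones---which is exactly what you are trying to prove. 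The observation that $[x_1,m]*[m,x_3]$ is a $(1,O(\delta_X))$--quasigeodesic in $X$ does not help, since the hypothesis on $\phi$ concerns only genuine geodesics. For the ``in particular'' clause---and in fact for every use of the lemma in the paper---the map $\phi$ is coarsely Lipschitz; then $d_X(m,m')\le\delta_X$ for some $m'\in[x_1,x_3]$ gives $d_Y(\phi(m),\phi(m'))=O(1)$ directly, and the argument goes through without needing the broken path at all.
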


\begin{proof}
According to Proposition~\ref{prop:hyperbolic_median_unique}, there is no ambiguity in the median operations. Let $x_1,x_2,x_3\in X$, and for each pair $(i,j)$ let $\gamma_{ij}$ be a uniform quasigeodesic from $x_i$ to $x_j$ that passes through the median $m=\mu_X(x_1,x_2,x_3)$. The image $\phi\gamma_{ij}$ is a uniform unparametrised quasigeodesic, so lies at bounded Hausdorff-distance from a geodesic with the same endpoints by the Morse lemma. Thus $\phi(m)$ is uniformly close to $\mu_Y(\phi(x_1),\phi(x_2),\phi(x_3))$.
\end{proof}

Let $X$ and $Y$ be metric spaces equipped with $n$--ary operations $f_X:X^n\to X$ and $f_Y:Y^n\to Y$. We say that a map $\phi:X\to Y$ is a \emph{coarse morphism} with respect to $f_X$ and $f_Y$ if there is a constant $k$ such that $\phi f_X(x_1,\dots,x_n)$ is $k$--close to $f_Y(\phi(x_1),\dots,\phi(x_n))$ for every $(x_1,\dots,x_n)\in X^n$.

\begin{lemma} \label{lem:partial_quasimorphism}
Let $X$ and $Y$ be hyperbolic spaces. If a coarsely Lipschitz map $\phi:X\to Y$ is a coarse morphism with respect to the binary operations $\mu_X(\cdot,\cdot,x_0):X^2\to X$ and $\mu_Y(\cdot,\cdot,\phi(x_0)):Y^2\to Y$ for some $x_0\in X$, then $\phi$ is quasimedian.
\end{lemma}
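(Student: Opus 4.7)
The plan is to reduce to Lemma~\ref{lem:qieimpliesqm} by showing that $\phi$ sends geodesics of $X$ to uniform unparametrised quasigeodesics of $Y$. Write $y_0 := \phi(x_0)$; all constants below will depend only on the hyperbolicity constants of $X$ and $Y$, the coarse-Lipschitz constant of $\phi$, and the constant in the coarse-morphism hypothesis.

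The first step is to handle geodesics emanating from $x_0$. Given such a geodesic $\gamma\colon[0,L]\to X$ and any $s\le t$, the point $\gamma(s)$ lies on the geodesic $[x_0,\gamma(t)]$, so it is uniformly close to $\mu_X(x_0,\gamma(t),\gamma(s))$. Applying $\phi$, together with the coarse-Lipschitz property and the coarse-morphism hypothesis, yields that $\phi(\gamma(s))$ is uniformly close to $\mu_Y(y_0,\phi(\gamma(t)),\phi(\gamma(s)))$; by Proposition~\ref{prop:hyperbolic_median_unique} and the standard characterisation of medians in hyperbolic spaces, this is equivalent to $\phi(\gamma(s))$ lying near a geodesic $[y_0,\phi(\gamma(t))]$. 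Specialising $t=L$ shows that $\phi\gamma$ has image at bounded Hausdorff distance from $[y_0,\phi(\gamma(L))]$, while varying $t$ and tracking closest-point projections shows that the parametrisation of $\phi\gamma$ is monotone along that geodesic, up to uniform error. Combined with $\phi$ being coarsely Lipschitz, $\phi\gamma$ is therefore a uniform unparametrised quasigeodesic.

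Next I would extend to an arbitrary geodesic $\gamma$ from $a$ to $b$. The coarse median $m_0 = \mu_X(x_0,a,b)$ lies uniformly close to $\gamma$, so I would pick a point $\tilde m_0\in\gamma$ near $m_0$ and split $\gamma = \gamma_1 \cdot \gamma_2$ at $\tilde m_0$. Thinness of the triangle with corners $a,b,x_0$ in $X$ forces $\gamma_1$ to fellow-travel an initial segment of a geodesic $[a,x_0]$ and $\gamma_2$ to fellow-travel a final segment of $[x_0,b]$, so by the first step the images $\phi\gamma_1$ and $\phi\gamma_2$ are uniform unparametrised quasigeodesics lying close to sub-arcs of $[y_0,\phi(a)]$ and $[y_0,\phi(b)]$, respectively. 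The coarse-morphism hypothesis places $\phi(\tilde m_0)$ uniformly close to $\mu_Y(y_0,\phi(a),\phi(b))$, hence close to the geodesic $[\phi(a),\phi(b)]$. A symmetric thin-triangle argument in $Y$ then identifies the concatenation of these two sub-arcs with $[\phi(a),\phi(b)]$ up to uniform error, so $\phi\gamma = \phi\gamma_1 \cdot \phi\gamma_2$ is a uniform unparametrised quasigeodesic from $\phi(a)$ to $\phi(b)$. Lemma~\ref{lem:qieimpliesqm} then completes the proof.

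I anticipate the main obstacle to be the concatenation at the end of the second step: on its own, the fact that $\phi\gamma_1$ and $\phi\gamma_2$ are unparametrised quasigeodesics does not prevent their join from folding back along the comparison tripod of $\phi(a),\phi(b),y_0$ in $Y$. The coarse-morphism hypothesis is used precisely to place $\phi(\tilde m_0)$ near the median of those three points, forcing the two halves to meet on the correct ``branch'' of the tripod and ruling out such folding. Tracking constants through the chain of median, coarse-Lipschitz, and hyperbolic estimates is otherwise routine.
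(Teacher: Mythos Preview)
Your proposal is correct and follows essentially the same approach as the paper: first use the coarse-morphism hypothesis to show that geodesics from $x_0$ are sent to uniform unparametrised quasigeodesics, then handle an arbitrary geodesic by splitting it at the median with $x_0$, controlling the concatenation via the image of that median, and finally invoking Lemma~\ref{lem:qieimpliesqm}. The paper's write-up is more compressed (it asserts the first step in a single line and phrases the concatenation control as ``bounded coarse intersection'' rather than your tripod language), but the underlying argument is the same.
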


\begin{proof}
Let $x_1,x_2\in X$, and let $\gamma$ be a geodesic from $x_1$ to $x_2$. Let $\gamma_i$ be a uniform quasigeodesic from $x_0$ to $x_i$ that passes through $m=\mu_X(x_1,x_2,x_0)$. Let $\gamma_i'\subset\gamma_i$ be the subsegment from $m$ to $x_i$. Then $\gamma$ lies at bounded Hausdorff-distance from the uniform quasigeodesic $\gamma_1'\cup\gamma_2'$. 

The coarse morphism property of $\phi$ tells us that the $\gamma_i$ get mapped to uniform unparametrised quasigeodesics, and moreover that $\phi(m)$ is uniformly close to $\mu_Y(\phi(x_1),\phi(x_2),\phi(x_0))$. In particular, the coarse intersection of $\phi\gamma_1'$ with $\phi\gamma_2'$ is uniformly bounded. This shows that $\phi\gamma_1'\cup\phi\gamma_2'$ is a uniform unparametrised quasigeodesic. Since $\gamma$ lies at bounded Hausdorff-distance from $\gamma_1'\cup\gamma_2'$, this implies that $\phi\gamma$ is a uniform unparametrised quasigeodesic.

We have shown that $\phi$ sends geodesics to uniform unparametrised quasigeodesics, so we are done by Lemma~\ref{lem:qieimpliesqm}.
\end{proof}

\subsection{The Bestvina--Bromberg--Fujiwara construction} \label{subsection:bbf}

Another fruitful way of studying mapping class groups is via the \emph{projection complex} techniques introduced by Bestvina--Bromberg--Fujiwara \cite{bestvinabrombergfujiwara:constructing}. These allow one to assemble the curve graphs of the subsurfaces of $S$ together into a finite collection of hyperbolic spaces in such a way that $\mcg(S)$ virtually acts on their product, with orbit maps being quasiisometric embeddings. The construction is considerably more general than this, and has many applications, for example \cite{bartelsbestvina:farrell, eskinmasurrafi:large, dahmani:normal, balasubramanya:acylindrical, claymangahas:hyperbolic}. 

Following \cite[\S4]{bestvinabrombergfujiwara:constructing}, let $\bf Y$ be a collection of geodesic metric spaces with specified subsets $\pi_Y(X)\subset Y$ for any distinct $X,Y\in \bf Y$. Let $\dist^\pi_Y(X,Z)$ denote the quantity $\diam(\pi_Y(X)\cup\pi_Y(Z))$; in particular, if every $\pi_Y(X)$ is a singleton, as will be the case for us in Section~\ref{section:mainresult}, then $\dist^\pi_Y(X,Z)=\dist_Y(\pi_Y(X),\pi_Y(Z))$. We say that $(\bf Y,\{\pi_Y\})$ \emph{satisfies the projection axioms} with constant $\xi$ if the following hold for any distinct $X,Y,Z\in\bf Y$.
\begin{enumerate}
\enumlabel[(P0)] $\diam(\pi_Y(X))\leq\xi.$       \label{projection0} 
\enumlabel[(P1)] If $\dist^\pi_Z(X,Y)>\xi$, then $\dist^\pi_X(Y,Z)\le \xi$. \label{projection1} 
\enumlabel[(P2)] The set $\{W:\dist^\pi_W(X,Y)>\xi\}$ is finite. \label{projection2}
\end{enumerate}
Moreover, if a group $G$ acts on $\bf Y$ and each element $g$ of $G$ induces isometries $\bar g:Y\to gY$, then we say that the projection axioms are satisfied $G$--equivariantly if $\overline{g_1g_2}=\bar g_1\bar g_2$ and $\bar g\pi_Y(X)=\pi_{gY}(gX)$ hold for any distinct $X,Y\in\bf Y$.

As an example, let $\bf Y$ comprise the curve graphs associated to a collection of pairwise overlapping subsurfaces of $S$, with $\pi_Y(X)=\rho^X_Y$. Then \ref{projection1} is the Behrstock inequality, and \ref{projection2} follows from \cite[\S6]{masurminsky:geometry:2} or \cite[Lem.~5.3]{bestvinabrombergfujiwara:constructing}.

\subsubsection*{\emph{\textbf{The quasitree of metric spaces}}} ~

Recall that a \emph{quasitree} is a geodesic metric space that is quasiisometric to a tree. (We could equivalently demand that it be $(1,C)$--quasiisometric to a tree \cite{kerr:tree}.)

Suppose that $(\bf Y,\{\pi_Y\})$ satisfies the projection axioms with constant $\xi$. In order for the constructions to work, we need to perturb the distance functions $\dist^\pi_Y$; see \cite[\S3.2]{bestvinabrombergfujiwara:constructing} for how to do this. (Actually, by \cite[Thm~4.1]{bestvinabrombergfujiwarasisto:acylindrical}, we could instead perturb the sets $\pi_Y(X)$, but perturbing the distance function will have less inertia.) The details will not be important for us, only that the alteration was small: writing $\dist^\flat_Y$ for the modified distance function, there is a constant $\delta$, depending only on $\xi$, such that 
\begin{align}
\dist^\pi_Y(X,Z)-\delta \hspace{2mm}\le\hspace{2mm} \dist^\flat_Y(X,Z) \hspace{2mm}\le\hspace{2mm} \dist^\pi_Y(X,Z) \label{inequality:changedistance}
\end{align}
holds for any three distinct elements of $\bf Y$. 

The main thing is that this modified distance is what is used to construct the \emph{quasitree of metric spaces}. By the results of \cite{bestvinabrombergfujiwara:constructing}, there is a constant $\Theta$ depending only on $\xi$ such that all of what follows holds for any value of $K\geq\Theta$. To construct the quasitree of metric spaces $\C_K\bf Y$, begin with the disjoint union $\bigsqcup_{Y\in\bf Y}Y$, then attach an edge of length $L$ from each point in $\pi_Y(X)$ to each point in $\pi_X(Y)$ whenever $\dist^\flat_Z(\pi_Z(X),\pi_Z(Y))\le K$ for every other $Z\in\bf Y$. The constant $L$ is determined by $K$ and $\xi$. If the projection axioms are satisfied $G$--equivariantly, then the construction of $\C_K\bf Y$ naturally gives an action of $G$ on $\C_K\bf Y$ by isometries.

What makes this construction so useful is the fact that if the spaces that make up $\bf Y$ are uniformly hyperbolic then the quasitree of metric spaces $\C_K\bf Y$ is also hyperbolic \cite[Thm~4.17]{bestvinabrombergfujiwara:constructing}, and if they are uniformly quasiisometric to trees then $\C_K\bf Y$ is also a quasitree \cite[Thm~4.14]{bestvinabrombergfujiwara:constructing}.

The quasitree of metric spaces comes with a ``distance formula''. That is, the distance between two points in $\C_K\bf Y$ can coarsely be measured by considering the projections of the points to the component metric spaces---this is similar to the situation for mapping class groups \cite[Thm~6.12]{masurminsky:geometry:2}. For $X,Y\in\bf Y$ and $x\in X$, set $\pi^\flat_Y(x)=x$ if $Y=X$, and $\pi^\flat_Y(x)=\pi_Y(X)$ otherwise. The quantity $\ignore{a}{b}$ is equal to $a$ if $a\ge b$, and $0$ otherwise.

\begin{theorem}[Distance formula, {\cite[Thm~4.13]{bestvinabrombergfujiwara:constructing}}] \label{thm:bbfdf}
There is a constant $K'>K$ such that, for any $X$ and $Y$ in $\bf Y$, if $x\in X$ and $y\in Y$, then 
\[
\frac{1}{2}\sum_{Z\in\bf Y}\ignore{\dist^\flat_Z(\pi^\flat_Z(x),\pi^\flat_Z(y))}{K'}
    \hspace{2mm}\le\hspace{2mm}\dist_{\C_K\bf Y}(x,y)
    \hspace{2mm}\le\hspace{2mm}6K+4\sum_{Z\in\bf Y}\ignore{\dist^\flat_Z(\pi^\flat_Z(x),\pi^\flat_Z(y))}{K}.
\]
\end{theorem}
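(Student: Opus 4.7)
The proof splits into the upper and lower bounds, both of which rest on a strict partial order imposed by the projection axioms on the collection of factors where $x$ and $y$ project far apart. First I would set $\cal F = \{Z \in \bf Y : \dist^\flat_Z(\pi^\flat_Z(x), \pi^\flat_Z(y)) > K\}$, which is finite by \ref{projection2}, and use \ref{projection1} to show that $\cal F$ carries a strict partial order analogous to the one in Definition~\ref{definition:ordering}: an element is "between" two others when its $\dist^\pi$--diameter with them is large. A preliminary lemma of BBF type, extracted from iterating \ref{projection1} together with the fact that $X$ and $Y$ are "extremal" in this order, then shows that $\{X\} \cup \cal F \cup \{Y\}$ is in fact linearly ordered as $X = Z_0 < Z_1 < \cdots < Z_n < Z_{n+1} = Y$.

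For the upper bound, the plan is to build a concrete path along this sequence. Within each $Z_i$, take a geodesic from $\pi^\flat_{Z_i}(Z_{i-1})$ to $\pi^\flat_{Z_i}(Z_{i+1})$; between consecutive $Z_i$ and $Z_{i+1}$, use a single length-$L$ edge of $\C_K\bf Y$. Inequality~\eqref{inequality:changedistance} and the defining inequality of the order between consecutive $Z_i$ together guarantee that the required edges actually exist in $\C_K\bf Y$. By the triangle inequality, the total length is at most $\sum_i \dist^\flat_{Z_i}(\pi^\flat_{Z_i}(x), \pi^\flat_{Z_i}(y)) + O(nL + K)$, which is absorbed into the $4\sum$ and $6K$ on the right-hand side of the theorem.

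For the lower bound, the plan is to fix a geodesic $\gamma$ from $x$ to $y$ in $\C_K\bf Y$ and show that each $Z \in \cal F$ (with the larger threshold $K'$) contributes $\dist^\flat_Z(\pi^\flat_Z(x), \pi^\flat_Z(y))$ to its length, essentially disjointly from the contribution of the other elements. The base case is coarse Lipschitzness of $\pi^\flat_Z : \C_K\bf Y \to Z$, which alone gives $\dist_{\C_K\bf Y}(x,y) \gtrsim \dist^\flat_Z(\pi^\flat_Z(x), \pi^\flat_Z(y))$ for any single $Z$. To combine these estimates over $Z \in \cal F$, I would induct by peeling off a minimal $Z$: \ref{projection1} forces $\gamma$ to contain a subsegment whose $\pi^\flat_Z$--image traverses almost all of the gap between $\pi^\flat_Z(x)$ and $\pi^\flat_Z(y)$, and the complementary portion of $\gamma$ projects near $\pi^\flat_Z(x)$ or $\pi^\flat_Z(y)$ in $Z$ and can be used to handle $\cal F \setminus \{Z\}$ by the inductive hypothesis.

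The hard part will be the disjointness statement in the lower bound: verifying that the subsegments of $\gamma$ witnessing distinct $Z \in \cal F$ essentially do not overlap in $\gamma$. This is precisely the phenomenon \ref{projection1} is designed to enforce, preventing three elements of $\cal F$ from simultaneously being "close" from every viewpoint; but converting it into a length estimate requires careful bookkeeping around the length-$L$ edges of $\C_K\bf Y$, which can pass through fibers of several projection maps at once. The factor $1/2$ on the left and the strict jump from threshold $K$ to $K' > K$ in the statement are precisely what is needed to absorb this residual overcounting.
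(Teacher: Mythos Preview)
The paper does not prove this statement: Theorem~\ref{thm:bbfdf} is stated with an external citation to \cite[Thm~4.13]{bestvinabrombergfujiwara:constructing} and no proof is given, as it is used purely as a black box (in fact only the upper bound is invoked, to conclude that if $\dist_{\C_K\bf Y}(x,y)>6K$ then some $Z$ has $\dist^\flat_Z\ge K$). So there is no ``paper's own proof'' to compare against.

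That said, your outline is broadly faithful to the Bestvina--Bromberg--Fujiwara argument: one does linearly order the factors where the projections are far apart, build the explicit path for the upper bound, and use coarse Lipschitzness of the projections $\pi^\flat_Z$ together with the ordering to control overlaps for the lower bound. One point to be careful about is that the linear ordering of $\cal F$ is not obtained directly from \ref{projection1} on the raw $\dist^\pi$; it is precisely the purpose of the perturbation to $\dist^\flat$ (inequality~\eqref{inequality:changedistance}) to make the order genuinely total, and your sketch slightly blurs this distinction. Likewise, the ``inductive peeling'' you describe for the lower bound is not quite how BBF proceed---they rather show that a geodesic in $\C_K\bf Y$ must enter each $Z$ with large projection and traverse it---but the underlying mechanism is the same.
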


The way we shall use this is to say that if $\dist_{\C_K\bf Y}(x,y)>6K$, then there is some $Z\in\bf Y$ that has $\dist^\flat_Z(\pi^\flat_Z(x),\pi^\flat_Z(y))\geq K$. We shall not directly use the lower bound, though we rely on its use in \cite{bestvinabrombergfujiwara:proper}.

\section{Proof using quasitrees of quasigeodesics} \label{section:mainresult}

In this section, we use the tools of \cite{bestvinabrombergfujiwara:proper} to give our first proof of Theorem~\ref{thm:mcgisacubecomplex}. Let us begin by describing some of the technical components of \cite[\S4]{bestvinabrombergfujiwara:proper}. 

Firstly, given subsurfaces $U$ and $V$ of $S$ that are equal or transverse, and given a quasigeodesic $\gamma$ in the curve graph $\C U$, define a map $\mfp_\gamma:\C V\to\gamma$ as follows. For $x\in\C V$, set $\mfp_\gamma(x)$ to be a closest point to $x$ in $\gamma$ if $V=U$, and a closest point to $\rho^V_U$ in $\gamma$ if $U\trans V$. If we have a collection of quasigeodesics that is invariant under the action of a subgroup $H<\mcg(S)$, then these projections can be chosen so that $h\mfp_\gamma(x)=\mfp_{h\gamma}(hx)$ for every $h\in H$. 

By \cite[Prop.~5.8]{bestvinabrombergfujiwara:constructing}, the set $\s$ of subsurfaces admits a finite partition $\s=\bigsqcup_{i=1}^\chi\s_i$ such that if $U,V\in\s_i$, then $U$ and $V$ overlap. Moreover, there is a finite-index subgroup $H^\mathrm{clr}$ of $\mcg(S)$ that preserves this colouring. The constructions of \cite[\S4]{bestvinabrombergfujiwara:proper} provide the following.
\begin{itemize}
\item   A constant $\lambda$ and a finite collection of distinguished $\lambda$--quasigeodesics $A=\{\gamma_1,\dots,\gamma_m\}$, where $\gamma_i$ lies in the curve graph $\C U_i$ of a subsurface $U_i$. We may have $U_i=U_j$.
\item   A finite-index subgroup $H<H^\mathrm{clr}$ with the property that the diameter of $\mfp_{\gamma_i}(h\gamma_i)$ is uniformly bounded (in terms of $E$) whenever $h\in H$ stabilises $U_i$ but not $\gamma_i$. 
\item   $H$ acts on the $\mcg(S)$--orbit of $A$, and the $\gamma_i$ form a transversal of the $H$--orbits. Although the sets $A_i=H\cdot\{\gamma_i\}$ are pairwise disjoint, the $\mcg(S)$--orbits of some of the $\gamma_i$ will coincide.
\item   A constant $\xi=\xi(E,\lambda)$ such that each $(A_i,\{\mfp_{h\gamma_i}(h'\gamma_i)\})$ satisfies the projection axioms $H$--equivariantly with constant $\xi$. 
\end{itemize}
The last point allows the construction of a quasitree $\C_K A_i$ with an isometric action of $H$, as described in Section~\ref{subsection:bbf}, and Bestvina--Bromberg--Fujiwara prove the following.

\begin{theorem}[{\cite[\S4.9]{bestvinabrombergfujiwara:proper}}] \label{thm:bbfqie}
For $K$ sufficiently large in terms of $E$ and $\lambda$, any orbit map $H\to\prod_{i=1}^m\C_K A_i$ is a quasiisometric embedding of $H$ in a product of quasitrees.
\end{theorem}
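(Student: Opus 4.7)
The strategy is to play two distance formulas off one another: the Masur--Minsky distance formula for $\mcg(S)$, which writes $\dist(g,h)$ up to uniform multiplicative and additive error as $\sum_{U\in\s}\ignore{\dist_{\C U}(\pi_U g,\pi_U h)}{K}$, and the BBF distance formula (Theorem~\ref{thm:bbfdf}) for each factor $\C_K A_i$, which writes $\dist_{\C_K A_i}(x,y)$ as a thresholded sum $\sum_{\gamma\in A_i}\ignore{\dist^\flat_\gamma(\mfp_\gamma x,\mfp_\gamma y)}{K}$. After fixing a basepoint $x_0\in\mcg(S)$, the aim is to show that $\dist(g,h)$ is comparable to $\sum_{i=1}^m\dist_{\C_K A_i}(gx_0,hx_0)$ up to multiplicative and additive error.

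For the upper bound, the action of $H$ on each $\C_K A_i$ is by isometries, so each generator of $H$ displaces the basepoint boundedly on each factor; summing over the $m$ factors yields that the orbit map is coarsely Lipschitz.

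For the lower bound, I would match Masur--Minsky terms with BBF terms. For each $U\in\s$ contributing a nonzero summand to Masur--Minsky, the goal is to locate a quasigeodesic $\gamma\in A$ whose ambient curve graph is $\C U$ (i.e.\ $\gamma=h'\gamma_i$ with $h'U_i=U$) such that $\mfp_{h'\gamma_i}(gx_0)$ and $\mfp_{h'\gamma_i}(hx_0)$ are correspondingly far apart in $\gamma$. This will produce a matching contribution to the BBF distance formula for the factor $\C_K A_i$ containing $\gamma$. The required richness of $A$ in each curve graph is precisely what the BBF decomposition \cite[\S4]{bestvinabrombergfujiwara:proper} arranges: every large subsurface projection in $\C U$ is witnessed by some $\gamma\in A$ supported in $\C U$.

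The central difficulty is making this matching faithful. A given subsurface $U$ carries an entire $(H\cap\stab(U))$-orbit of quasigeodesics in some $A_i$, and one must simultaneously avoid double-counting a Masur--Minsky term across different $\gamma$ and ensure that a single Masur--Minsky contribution is not split indiscernibly across the BBF factors. Here the projection axioms for each $(A_i,\{\mfp_{h\gamma_i}(h'\gamma_i)\})$ and the colouring $\s=\bigsqcup\s_i$ from \cite{bestvinabrombergfujiwara:constructing} are decisive: within a single colour, any two associated subsurfaces overlap, so the Behrstock inequality~\ref{projection1} controls interference between translates in a single $A_i$, while the partition into distinct colours separates contributions of different $A_i$. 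Choosing $K$ sufficiently large in terms of $E$ and $\lambda$ aligns the two thresholds so that the Masur--Minsky sum and the combined BBF sums agree up to uniform multiplicative and additive error, giving the quasiisometric embedding.
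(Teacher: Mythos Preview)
This theorem is not proved in the present paper; it is quoted as a black box from \cite[\S4.9]{bestvinabrombergfujiwara:proper}, so there is no proof here to compare against. That said, your outline is the strategy that Bestvina--Bromberg--Fujiwara actually implement: the Lipschitz direction is immediate from the isometric action, and the lower bound is obtained by comparing the Masur--Minsky distance formula for $H$ with the sum over $i$ of the distance formulas (Theorem~\ref{thm:bbfdf}) for the $\C_KA_i$.

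The one place where your sketch hides the real work is the sentence ``every large subsurface projection in $\C U$ is witnessed by some $\gamma\in A$ supported in $\C U$''. This is exactly the content of \cite[\S2--4]{bestvinabrombergfujiwara:proper}: the axes $\gamma_1,\dots,\gamma_m$ are not arbitrary quasigeodesics but are carefully chosen pseudo-Anosov axes with the property that, for each $U$, the finitely many $\stab_H(U)$--orbits of axes in $\C U$ are \emph{thick}, meaning that the product of closest-point projections to these orbits is itself a quasiisometric embedding of $\C U$ into a product of BBF quasitrees of quasilines. Without this thickness, a large $\dist_{\C U}(\pi_Ug,\pi_Uh)$ could be invisible to every $\mfp_\gamma$, and your matching would fail. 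So your outline is correct in shape, but the step you flag as ``precisely what the BBF decomposition arranges'' is the entire substance of the cited result rather than a consequence of the projection axioms or the colouring alone.
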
 

In view of Proposition~\ref{proposition:hagenpetyt2.11}, our aim is to show that these orbit maps are quasimedian, where the coarse median on $H$ is inherited from $\mcg(S)$. We start by making a particular choice of orbit map that simplifies a number of our arguments.

\subsubsection*{\textbf{\emph{The choice of orbit.}}} ~

We start by altering the choice of the axis $\gamma_i$ inside its $H$--orbit, which allows for some notational simplifications. Let $h\in P_{U_i}$ lie in the product region of $U_i$, so that $1\in P_{h^{-1}U_i}$. Replacing $\gamma_i$ by $h^{-1}\gamma_i$, we may assume that $1\in P_{U_i}$. By definition of $P_{U_i}$, this ensures that, for any $g\in H$, we have
\begin{align}
\dist_{\C V}(\pi_V(g),\rho^{gU_i}_V)\leq E \hsp\text{ whenever }\hsp V\trans gU_i. \label{eq:nearrho}
\end{align}
Moreover, we can choose $\gamma_i$ inside its $\stab_H(U_i)$--orbit to minimise $\dist_{U_i}(\gamma_i,\pi_{U_i}(1))$.

For each $i$, let $\psi_i:H\to\C_K A_i$ be given by 
\[ \psi_i(h) \hsp=\hsp h\mfp_{\gamma_i}\pi_{U_i}(1) \hsp=\hsp \mfp_{h\gamma_i}\pi_{hU_i}(h). \]
Then $\Psi=(\psi_1,\dots,\psi_m)$ defines an orbit map $H\to\prod_{i=1}^m\C_K A_i$.

Compositions as in the definition of $\psi_i$ will arise frequently, so let us adopt the convention of writing 
\[\ppi_h=\mfp_{h\gamma_i}\pi_{hU_i}:H\to h\gamma_i.\] 
For example, $\psi_i(h)$ is the image of $h$ under the composition of $\ppi_h$ with the inclusion $h\gamma_i\hookrightarrow\C_K A_i$. This also illustrates the fact that $h\gamma_i$ is naturally a subspace of both $\C U_i$ and $\C_KA_i$, although there is no sensible map between these two.

\subsubsection*{\textbf{\emph{Overview of important constants.}}} ~

The hierarchy constant $E$ and the quasigeodesic constant $\lambda$ are the ``independent variables''. As described, these determine the projection-axiom constant $\xi$. From these we obtain $\nu\geq10E$ and $\lambda'\ge\lambda+\xi$, which should also be thought of as small; their roles are mostly technical. See Lemmas~\ref{lem:behrstockaxis} and~\ref{lemma:lambdad}.

For the next ``order of magnitude'', fix $D\geq\max\{10\nu, D_1, \Theta\}$, where $D_1$ is the constant from Lemma~\ref{lem:medianonanhp} and $\Theta$ is as in Section~\ref{subsection:bbf}. Using this, we shall define a larger number $D'\geq \lambda'(D+50\lambda')$ in Lemma~\ref{lemma:lambdad}.

Larger still, fix $K>101D'$ large enough for Theorem~\ref{thm:bbfqie} to apply. Because $K$ is used to construct the quasitree of metric spaces $\C_KA_i$, it is perhaps more useful to think of choosing $K$ first, then choosing $D$ depending on this so that $D$ and $D'$ lie in a particular interval between $\max\{E,\lambda\}$ and $K$, which is nonempty as long as $K$ was chosen to be sufficiently large.

With $K$ now fixed, let us write $\Q_i$ for the quasitree $\C_KA_i$ from now on, both to simplify notation and to avoid confusion with the curve graphs $\C U_i$.

\subsubsection*{\emph{\textbf{Proof of {Theorem~\ref{thm:mcgisacubecomplex}}}}} ~

With this set-up established, we can now prove Theorem~\ref{thm:mcgisacubecomplex}. The following proposition is the technical heart of the argument; we prove it in Section~\ref{subsection:hptoqg}.

\begin{proposition} \label{prop:hptoqg}
There is a constant $q$ such that if $\beta$ is a $D$--hierarchy path in $H$, then $\psi_i\beta$ is an unparametrised $q$--quasigeodesic.
\end{proposition}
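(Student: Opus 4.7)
The plan is to verify the unparametrised quasigeodesic property of $\psi_i\beta$ in the quasitree $\Q_i = \C_K A_i$ by using the distance formula (Theorem~\ref{thm:bbfdf}) to reduce to a per-factor analysis. It will suffice to show that for any subpath $\psi_i\beta([n_1,n_2])$ and each $Z = h\gamma_i \in A_i$, the diameter of $\pi^\flat_Z\psi_i\beta([n_1,n_2])$ is controlled by $d^\flat_Z(\pi^\flat_Z\psi_i\beta(n_1),\pi^\flat_Z\psi_i\beta(n_2))$ up to uniform additive and multiplicative constants, and that these per-$Z$ contributions combine coherently without double-counting.

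For the single-factor analysis, by equivariance $\psi_i\beta(n) \in \beta(n)\gamma_i$. Fixing $Z = h\gamma_i \in A_i$, when $\beta(n)\gamma_i = h\gamma_i$ we have $\pi^\flat_Z\psi_i\beta(n) = \mfp_{h\gamma_i}\pi_{hU_i}(\beta(n))$. Since $\beta$ is a $D$--hierarchy path, the projection $\pi_{hU_i}\beta$ is a $D$--unparametrised quasigeodesic in the hyperbolic space $\C(hU_i)$; its closest-point projection onto the $\lambda$--quasigeodesic $h\gamma_i$ is therefore itself an unparametrised quasigeodesic (with uniform constants), by the standard Morse-type observation that in a hyperbolic space the closest-point projection of a quasigeodesic onto a quasigeodesic is, up to bounded error, a coarsely monotone map whose image is a subinterval of the target. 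When instead $\beta(n)\gamma_i \neq h\gamma_i$, we have $\pi^\flat_Z\psi_i\beta(n) = \mfp_{h\gamma_i}(\beta(n)\gamma_i)$, which is uniformly bounded by the projection axiom (P0). This part of the argument is essentially hyperbolic geometry.

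The harder step is establishing coherence across different $Z \in A_i$. As $n$ varies, the quasigeodesic $\beta(n)\gamma_i$ containing $\psi_i\beta(n)$ may change, and $\psi_i\beta$ crosses length-$L$ bridging edges in $\C_K A_i$. To rule out backtracking---where $\psi_i\beta$ leaves some $Z$ and later returns, inflating $\mathrm{diam}(\psi_i\beta([n_1,n_2]))$ without a corresponding increase in $d_{\Q_i}(\psi_i\beta(n_1),\psi_i\beta(n_2))$---I would invoke the partial order on $R$--relevant subsurfaces from Definition~\ref{definition:ordering}, the Behrstock inequality, and the colouring property that any two subsurfaces in $\s_i$ overlap. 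The aim is to show that whenever $h_1U_i$ and $h_2U_i$ are both $R$--relevant for $(\beta(n_1),\beta(n_2))$, they are comparable in the partial order and the hierarchy path visits them in that order, so each $Z \in A_i$ contributes to exactly one contiguous phase of $\psi_i\beta$.

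The main obstacle I anticipate is making this coherence quantitative: one must align the distance-formula thresholds $K$ and $K'$ of Theorem~\ref{thm:bbfdf} with the relevance threshold $R$ and the bounds coming from the closest-point projections onto the $\gamma_i$. This is where the hierarchy of scales fixed before the statement (ultimately $K > 101D'$ with $D' \geq \lambda'(D+50\lambda')$, and $D$ large compared to the hierarchy constant $E$) should be used, to guarantee that the sum in Theorem~\ref{thm:bbfdf} counted along $\beta$ is dominated by the corresponding sum for the endpoints, yielding the desired constant $q$ depending only on $E$, $\lambda$, $D$, and $K$.
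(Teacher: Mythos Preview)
Your reduction ``it will suffice to show that each $\pi^\flat_Z\psi_i\beta$ is a uniform unparametrised quasigeodesic'' is not justified, and this is where the approach breaks down. The distance formula in Theorem~\ref{thm:bbfdf} is \emph{thresholded}: contributions below $K$ (or $K'$) are discarded. Even if every $\pi^\flat_Z\psi_i\beta$ were coarsely monotone, the length of $\psi_i\beta$ is bounded below by a sum of \emph{unthresholded} variations, and there is no mechanism preventing many sub-threshold $Z$'s from accumulating along the path while being invisible in the endpoint distance. Your final paragraph gestures at this, but never says how the constants would actually close the gap; the paper does not attempt such a summation argument at all.

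There are two further concrete gaps in your per-factor analysis. First, when $\beta(n)\gamma_i\neq h\gamma_i$ you note that $\pi^\flat_Z\psi_i\beta(n)=\mfp_{h\gamma_i}(\beta(n)\gamma_i)$ has bounded diameter by \ref{projection0}, but this says nothing about how it moves as $n$ varies---and controlling that motion is precisely the content of Lemma~\ref{lem:closeonbigguys}. Second, you only order relevant \emph{subsurfaces}, but $A_i$ can contain several axes in the same $hU_i$; handling these requires an axis-level Behrstock inequality (Lemma~\ref{lem:behrstockaxis}) and a refined total order on relevant \emph{axes} (Lemma~\ref{lem:totalorder}), neither of which you mention.

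The paper's route is structurally different: it totally orders the $M$--relevant axes, shows (Lemma~\ref{lem:closeonbigguys}) that on the subpath $\beta_h$ associated to a relevant axis $h\gamma_i$ the image $\psi_i\beta_h$ stays in a $6K$--neighbourhood of $h\gamma_i\subset\Q_i$, proves the complementary pieces $\psi_i\alpha_j$ are also uniform quasigeodesics (Lemma~\ref{lem:fellowtravel}), bounds the coarse overlaps between consecutive pieces (Lemma~\ref{lem:coarseintersections}), and then applies a local-to-global lemma for concatenations of quasigeodesics in hyperbolic spaces (Lemma~\ref{lem:hagenwise}). The free constant $M$ is chosen at the end to make the $\beta_j$ long enough for that lemma. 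The distance formula is used only pointwise (to prove Lemma~\ref{lem:closeonbigguys}), not as a global bookkeeping device.
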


It is worth observing that this is not automatic from the fact that hierarchy paths are quasigeodesics and $\Psi$ is a quasiisometric embedding. For example, a log-spiral in the Euclidean plane is a quasigeodesic, but its projections to 1--dimensional subspaces are not. We actually only need the weaker statement that $\psi_i\beta$ is at a uniformly bounded Hausdorff-distance from the geodesic between its endpoints, but Proposition~\ref{prop:hptoqg} is perhaps the most natural way to show this.

Here follows the proof of Theorem~\ref{thm:mcgisacubecomplex}, assuming Proposition~\ref{prop:hptoqg}. 

\begin{proof}[Proof of Theorem~\ref{thm:mcgisacubecomplex}]
It suffices to prove the result for the finite-index subgroup $H<\mcg(S)$, so let us show that $\Psi:H\to\prod_{i=1}^m\Q_i$ is quasimedian. Since the coarse median in $\prod_{i=1}^m\Q_i$ is defined component-wise, this amounts to showing that each $\psi_i$ is quasimedian.

For this, take $x_1,x_2,x_3\in H$ and let $m=\mu(x_1,x_2,x_3)$. According to Lemma~\ref{lem:medianonanhp}, there are $D$--hierarchy paths $\beta_{jk}$ from $x_j$ to $x_k$ that pass through $m$. Applying $\psi_i$ gives a triangle in $\Q_i$ whose vertices are $\psi_i(x_j)$ and whose edges are $\psi_i\beta_{jk}$, and $\psi_i(m)$ lies on all three sides of this triangle. Moreover, Proposition~\ref{prop:hptoqg} tells us that the sides of this triangle are $q$--quasigeodesics. It now follows from hyperbolicity of $\Q_i$ that $\psi_i(m)$ is uniformly close to the median of the $\psi_i(x_j)$ in $\Q_i$. Since this holds for each $i$, this shows that $\Psi$ is quasimedian.

Combining this with Theorem~\ref{thm:bbfqie}, we have that the orbit map $\Psi$ is a quasimedian quasiisometric embedding of $H$ in a finite product of quasitrees. Take any quasimedian quasiisometry from $\prod_{i=1}^m\Q_i$ to a CAT(0) cube complex $Y$ (such as the obvious quasiisometry to a product of trees, which is quasimedian by Lemma~\ref{lem:qieimpliesqm}), and compose this with $\Psi$ to produce a quasimedian quasiisometric embedding $H\to Y$. Applying Proposition~\ref{proposition:hagenpetyt2.11} completes the proof. 
\end{proof}

\subsection{Proof of {Proposition~\ref{prop:hptoqg}}}  \label{subsection:hptoqg} 

All that remains for this proof of Theorem~\ref{thm:mcgisacubecomplex} is to establish Proposition~\ref{prop:hptoqg}, which says that the $\psi_i$ map $D$--hierarchy paths in $\mcg(S)$ to uniform unparametrised quasigeodesics in $\Q_i$. Recall that $\ppi_h=\mfp_{h\gamma_i}\pi_{hU_i}:H\to h\gamma_i$, where $\gamma_i$ is a quasigeodesic in $\C U_i$.

Let us start with an important observation that is a variant of the Behrstock inequality.

\begin{lemma} \label{lem:behrstockaxis}
There is a constant $\nu=\nu(E,\lambda)\geq10E$ such that if $h,h'\in H$ satisfy $hU_i=h'U_i$, but $h\gamma_i\neq h'\gamma_i$, then for any $z\in H$, at most one of the quantities 
\[
\dist_{h\gamma_i}(\ppi_h(z),\mfp_{h\gamma_i}(h'\gamma_i)) \hspace{5mm} \text{and} \hspace{5mm} \dist_{h'\gamma_i}(\ppi_{h'}(z),\mfp_{h'\gamma_i}(h\gamma_i))
\]
is greater than $\nu$; moreover, $\dist_{h'\gamma_i}(\ppi_{h'}(h),\mfp_{h'\gamma_i}(h\gamma_i))\leq\nu$.
\end{lemma}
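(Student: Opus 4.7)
The plan is to work inside the $E$--hyperbolic curve graph $\C hU_i=\C h'U_i$, in which $h\gamma_i$ and $h'\gamma_i$ are two distinct $\lambda$--quasigeodesics. Setting $g=h^{-1}h'$, the hypotheses translate to $g\in\stab_H(U_i)\setminus\stab_H(\gamma_i)$, so the defining property of $H$ gives that $\mfp_{\gamma_i}(g\gamma_i)$ has diameter bounded in terms of $E$; by $H$--equivariance this transfers to uniformly bounded diameter for $\mfp_{h\gamma_i}(h'\gamma_i)$ and $\mfp_{h'\gamma_i}(h\gamma_i)$. The subset $\pi_{hU_i}(z)\subset\C hU_i$ is also uniformly bounded.

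For the first assertion, I would invoke the standard fact that closest-point projections of a uniformly bounded set to two $\lambda$--quasigeodesics $\alpha,\alpha'$ in an $E$--hyperbolic space satisfy a Behrstock-type inequality whenever $\mfp_\alpha(\alpha')$ and $\mfp_{\alpha'}(\alpha)$ are uniformly bounded. The argument is a coarse tripod: if the projection of the bounded set to $\alpha$ lies far from the fellow-traveling region $\mfp_\alpha(\alpha')$, then any quasigeodesic from that bounded set to $\alpha'$ is forced through $\mfp_{\alpha'}(\alpha)$, pinning its projection to $\alpha'$ uniformly close to $\mfp_{\alpha'}(\alpha)$. I would take $\nu\geq10E$ larger than all constants appearing in this argument; they depend only on $E$ and $\lambda$.

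For the second assertion, I would specialise the first to $z=h$ and translate by $h^{-1}$, using $\pi_{hU_i}(h)=h\pi_{U_i}(1)$. The dichotomy becomes: at most one of
\[
\dist_{\gamma_i}\bigl(\mfp_{\gamma_i}\pi_{U_i}(1),\ \mfp_{\gamma_i}(g\gamma_i)\bigr) \quad\text{and}\quad \dist_{g\gamma_i}\bigl(\mfp_{g\gamma_i}\pi_{U_i}(1),\ \mfp_{g\gamma_i}(\gamma_i)\bigr)
\]
exceeds $\nu$, and the second is what needs bounding. Suppose for contradiction it exceeds $\nu$; then the first is at most $\nu$, and the coarse tripod on $\gamma_i$, $g\gamma_i$, and some $x\in\pi_{U_i}(1)$ realising the large projection places $x$ on the ``$g\gamma_i$--branch''. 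A geodesic from $x$ to $\gamma_i$ must then enter $g\gamma_i$ at $\mfp_{g\gamma_i}(x)$ and run along $g\gamma_i$ to the fellow-traveling region $\mfp_{g\gamma_i}(\gamma_i)$ before crossing to $\gamma_i$, giving, up to bounded error,
\[
\dist(x,\gamma_i)\ \geq\ \dist(x,g\gamma_i) + \dist_{g\gamma_i}\bigl(\mfp_{g\gamma_i}(x),\ \mfp_{g\gamma_i}(\gamma_i)\bigr) - O(1).
\]
Choosing $\nu$ large enough, this exceeds $\dist(x,g\gamma_i)+\diam\pi_{U_i}(1)$, whence $\dist_{\C U_i}(g\gamma_i,\pi_{U_i}(1))<\dist_{\C U_i}(\gamma_i,\pi_{U_i}(1))$, contradicting the choice of $\gamma_i$ as minimising this quantity in its $\stab_H(U_i)$--orbit.

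The main obstacle I anticipate is the tripod estimate producing the additive lower bound on $\dist(x,\gamma_i)-\dist(x,g\gamma_i)$: while this is standard hyperbolic geometry, it requires careful bookkeeping of constants so that a single $\nu=\nu(E,\lambda)\geq10E$ controls both the Part 1 dichotomy and the strict inequality feeding the Part 2 contradiction. A minor extra subtlety is that $\pi_{U_i}(1)$ is a bounded-diameter set rather than a point, contributing a $\diam\pi_{U_i}(1)$ error that is absorbed into $\nu$.
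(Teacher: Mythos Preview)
Your proposal is correct and follows essentially the same approach as the paper: for the first statement the paper simply cites \cite[Prop.~3.1]{bestvinabrombergfujiwara:proper} (the Behrstock-type inequality for two quasiconvex quasigeodesics with bounded mutual projection), and for the ``moreover'' statement the paper uses the same minimality-plus-tripod argument you outline, though phrased directly rather than by contradiction. Your contradiction version makes the role of the detour estimate $\dist(x,\gamma_i)\ge\dist(x,g\gamma_i)+\nu-O(1)$ more explicit than the paper's two-sentence sketch, but the geometric content is identical.
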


\begin{proof} 
The first statement follows from the proof of \cite[Prop.~3.1]{bestvinabrombergfujiwara:proper}, because $h\gamma_i$ and $h'\gamma_i$ are $\lambda$--quasigeodesics and the bounded set $\pi_{h U_i}(z)$ is quasiconvex.
For the ``moreover'' statement, note that the fact that $\gamma_i$ was chosen to minimise $\dist_{\C U_i}(\pi_{U_i}(1),\gamma_i)$ implies that the geodesic in $\C hU_i$ from $\ppi_h(h)$ to $\ppi_{h'}(h)$ lies in a neighbourhood of the union of the geodesics from $\pi_{hU_i}(h)$ to $\ppi_h(h)$ and $\ppi_{h'}(h)$. In particular, $\mfp_{h'\gamma_i}(\ppi_h(h))\in\mfp_{h'\gamma_i}(h\gamma_i)$ lies close to $\ppi_{h'}(h)$.
\end{proof}

The map $\Psi:H\to\prod_{i=1}^m\Q_i$ is equivariant, so we can assume that the $D$--hierarchy path $\beta:\{0,\dots,T\}\to H$ has $\beta(0)=1$. Write $g=\beta(T)$.

\begin{lemma} \label{lemma:lambdad}
There are constants $\lambda'\ge\lambda+\xi$ and $D'\ge\lambda'(D+50\lambda')$, defined in terms of $\lambda$ and $E$ only, such that, for any $h\in H$, 
\begin{enumerate}
\enumlabel[(\ref{lemma:lambdad}.1)] the restriction $\mfp_{h\gamma_i}|_{\C hU_i}$ is $\lambda'$--coarsely Lipschitz,    \label{lemma:lambdad:lambda}
\enumlabel[(\ref{lemma:lambdad}.2)] $\ppi_h\beta$ is an unparametrised $D'$--quasigeodesic from $\ppi_h(1)$ to $\ppi_h(g)$. \label{lemma:lambdad:d}
\end{enumerate}
\end{lemma}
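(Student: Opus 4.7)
The plan is to reduce both parts to standard facts about closest-point projection onto quasigeodesics in the $E$-hyperbolic curve graph $\C hU_i$. Throughout, $h\gamma_i$ is a $\lambda$-quasigeodesic in $\C hU_i$ (with $\lambda$ and $E$ the independent variables fixed earlier).

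For part~\ref{lemma:lambdad:lambda}, I would invoke the classical Morse-lemma consequence that in an $E$-hyperbolic space, the closest-point projection onto a $\lambda$-quasigeodesic is coarsely Lipschitz with constants depending only on $E$ and $\lambda$. Applied to $h\gamma_i \subset \C hU_i$, this yields the desired bound, and I enlarge the resulting constant to some $\lambda' \ge \lambda + \xi$ so the bound $\lambda' \ge \lambda + \xi$ also holds. Equivariance of the $\mfp_{h\gamma_i}$ under $H$ means the same $\lambda'$ works for every $h$.

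For part~\ref{lemma:lambdad:d}, the starting point is the definition of a $D$-hierarchy path: since $hU_i \nest S$, the projection $\pi_{hU_i}\beta$ is an unparametrised $D$-quasigeodesic in $\C hU_i$ running from $\pi_{hU_i}(1)$ to $\pi_{hU_i}(g)$. I would then combine part~(1) with the standard hyperbolic-geometry fact that the closest-point projection of an unparametrised quasigeodesic onto another quasigeodesic in a $\delta$-hyperbolic space is itself an unparametrised quasigeodesic, with constants depending only on $\delta$, the quasigeodesic constants, and the original unparametrised quasigeodesic constants. Concretely, part~(1) controls the step-size of $\ppi_h\beta$ along $h\gamma_i$, while the near-monotonicity of closest-point projection in a hyperbolic space (obtained by replacing the unparametrised quasigeodesic $\pi_{hU_i}\beta$ by a genuine geodesic via the Morse lemma, then projecting) forces the image to traverse $h\gamma_i$ essentially monotonically from $\ppi_h(1)$ to $\ppi_h(g)$. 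Choosing $D'$ large enough in terms of $\lambda, E, D$ absorbs all the additive errors; since $D$ itself was fixed in terms of $E$ and $\lambda$, one can take $D'$ depending only on $E$ and $\lambda$, and in particular arrange $D' \ge \lambda'(D + 50\lambda')$.

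The main obstacle is the rigorous verification of the ``no backtracking'' statement, namely that for $t_1 < t_2 < t_3$ in the domain of $\beta$, the point $\ppi_h(\beta(t_2))$ lies within bounded distance of the subsegment of $h\gamma_i$ joining $\ppi_h(\beta(t_1))$ to $\ppi_h(\beta(t_3))$. The cleanest way to obtain this is to choose a geodesic in $\C hU_i$ from $\pi_{hU_i}(\beta(t_1))$ to $\pi_{hU_i}(\beta(t_3))$ that, by the Morse lemma, fellow-travels $\pi_{hU_i}\beta|_{[t_1,t_3]}$, then to appeal to the standard fact that the closest-point projection of a geodesic onto a quasigeodesic in a hyperbolic space is Hausdorff-close to an interval of the target. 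Assembling these ingredients with the appropriate accounting of constants should give the bound $D'$ in the required form.
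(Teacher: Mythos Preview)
Your proposal is correct and follows essentially the same approach as the paper: both argue that $h\gamma_i$ is quasiconvex in the $E$--hyperbolic space $\C hU_i$ (being a $\lambda$--quasigeodesic), so closest-point projection is coarsely Lipschitz, and then compose this with the unparametrised $D$--quasigeodesic $\pi_{hU_i}\beta$ coming from the definition of hierarchy path. The paper's proof is a brief two-sentence sketch treating the composition as a standard hyperbolic-geometry fact, whereas you spell out the no-backtracking step via the Morse lemma; the content is the same.
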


\begin{proof}
By virtue of being a $\lambda$--quasigeodesic, $h\gamma_i$ is quasiconvex in the $E$--hyperbolic space $\C hU_i$, which provides $\lambda'$. The existence of $D'$ is a consequence of this and the fact that, by definition of $\beta$ being a $D$--hierarchy path, $\pi_{U_i}\beta$ is an unparametrised $D$--quasigeodesic. Since $D$ and $\lambda'$ depend only on $\lambda$ and $E$, so does $D'$.
\end{proof}

For a constant $t\geq100D'$ and mapping classes $x,y\in H$, we shall write 
\[\rel^{\gamma_i}_t(x,y)=\left\{h\gamma_i\in H\cdot\{\gamma_i\} \hsp:\hsp \dist_{h\gamma_i}(\ppi_h(x),\ppi_h(y))\geq t\right\}.\] 
As a consequence of \ref{lemma:lambdad:d}, if $h\gamma_i\in\rel^{\gamma_i}_{100D'}(1,g)$ then there exists a minimal $a_h\in\{0,\dots,T\}$ such that $\dist_{h\gamma_i}(\ppi_h\beta(a_h),\ppi_h(1))\geq 2D'$, and similarly there is a maximal $b_h$ with $\dist_{h\gamma_i}(\ppi_h\beta(b_h),\ppi_h(g))\geq2D'$. Note that $a_h$ must be strictly less than $b_h$, and also that the restricted paths $\ppi_h\beta|_{[0,a_h]}$ and $\ppi_h\beta|_{[b_h,T]}$ are $10D'$--coarsely constant. Let us write $x_h=\beta(a_h)\in H$ and $y_h=\beta(b_h)\in H$. Furthermore, write $\beta_h=\beta|_{[a_h,b_h]}\subset H$. See Figure~\ref{figure:xhyh}.

\begin{figure}[ht]
\includegraphics[width=14cm]{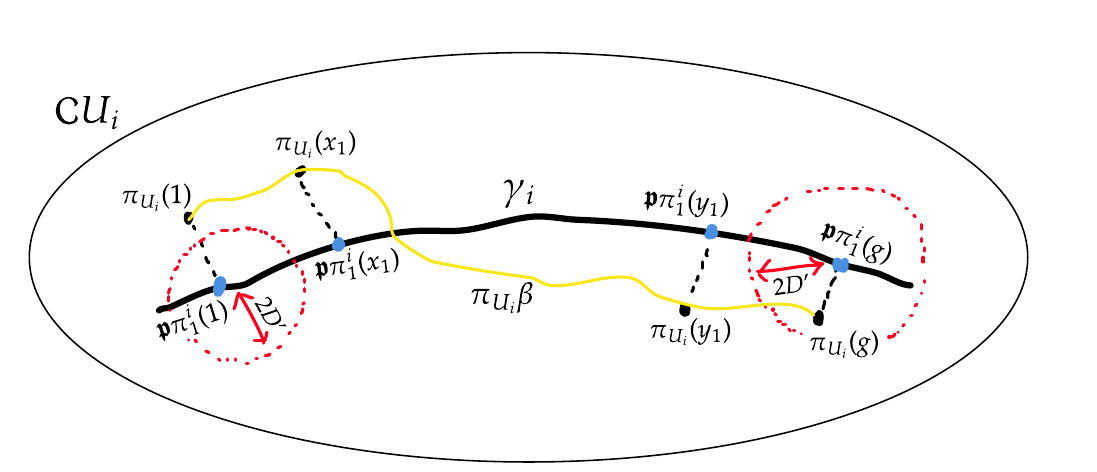}\centering
\caption{Construction of $x_h$ and $y_h$, illustrated with $h=1$. They are the first and last points of $\beta$ that are not mapped into one of the red circles by $\ppi_1$.} \label{figure:xhyh}
\end{figure}

\subsubsection*{\emph{\textbf{The ordering on $\rel^{\gamma_i}_{100D'}(1,g)$.}}} ~

Recall that $D\geq10\nu\geq100E\geq100$, where $\nu$ is the constant coming from Lemma~\ref{lem:behrstockaxis}.

Because $H$ preserves the colouring of $\s$, we have that either $hU_i=h'U_i$ or $hU_i\trans h'U_i$ for any $h,h'\in H$. Because of \ref{lemma:lambdad:lambda}, the lower bound on the value of $D'$ ensures that $hU_i\in\rel_{10\nu}(1,g)$ whenever $h\gamma_i\in\rel^{\gamma_i}_{100D'}(1,g)$. Thus, if $h\gamma_i$ and $h'\gamma_i$ are elements of $\rel^{\gamma_i}_{100D'}(1,g)$ with $hU_i\neq h'U_i$, then we may assume that $hU_i<h'U_i$ in the ordering from Definition~\ref{definition:ordering}, and we then set $h\gamma_i<h'\gamma_i$. Otherwise, $hU_i=h'U_i$, and then Lemma~\ref{lem:behrstockaxis} tells us (perhaps after swapping $h$ and $h'$) that $\ppi_h(g)$ is $\nu$--close to $\mfp_{h\gamma_i}(h'\gamma_i)$, and $\ppi_{h'}(1)$ is $\nu$--close to $\mfp_{h'\gamma_i}(h\gamma_i)$. In this case, we write $h\gamma_i<h'\gamma_i$. 

By construction, if $h\gamma_i<h'\gamma_i$, then we get an ordering of integers $b_h<a_{h'}$. As noted earlier, we also have that $a_h<b_h$. Therefore, the set $\{a_h,b_h : h\gamma_i\in\rel^{\gamma_i}_{100D'}(1,g)\}$ is naturally an ordered subset of $\{0,\dots,T\}$.

\begin{lemma} \label{lem:totalorder}
$<$ is a total order on $\rel^{\gamma_i}_{100D'}(1,g)$.
\end{lemma}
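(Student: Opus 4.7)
I would realise $<$ as the pullback along $h\gamma_i\mapsto[a_h,b_h]\subset\{0,\dots,T\}$ of the total order on disjoint integer intervals. Since $a_h<b_h$ always holds, this plan reduces the lemma to two sub-statements: (i) trichotomy, and (ii) the ``by construction'' claim recorded just before the lemma statement, that $h\gamma_i<h'\gamma_i$ forces $b_h<a_{h'}$. Transitivity and antisymmetry then follow immediately from the corresponding properties of disjoint integer intervals.

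For (i), I would split on whether $hU_i=h'U_i$. When $hU_i\neq h'U_i$, the colouring preserved by $H$ forces $hU_i\trans h'U_i$, and both subsurfaces sit in $\rel_{10\nu}(1,g)$ by the paragraph preceding the lemma; comparability then follows from the fact that the restriction of the order of Definition~\ref{definition:ordering} to transverse sufficiently-relevant subsurfaces is total (contained in~\cite[Lem.~4.5]{behrstockkleinerminskymosher:geometry} together with the Behrstock inequality). When $hU_i=h'U_i$, I would apply Lemma~\ref{lem:behrstockaxis} at $z=1$ and $z=g$: the hypothesis $\dist_{h\gamma_i}(\ppi_h(1),\ppi_h(g))\geq100D'\gg2\nu$ prevents $\mfp_{h\gamma_i}(h'\gamma_i)$ from being $\nu$-close to both $\ppi_h(1)$ and $\ppi_h(g)$, and similarly for $\mfp_{h'\gamma_i}(h\gamma_i)$, leaving exactly one of the two defining configurations for $<$ realised.

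For (ii) in the case $hU_i=h'U_i$, write $p=\mfp_{h\gamma_i}(h'\gamma_i)$ and $p'=\mfp_{h'\gamma_i}(h\gamma_i)$; by the definition of $h\gamma_i<h'\gamma_i$ we have $\dist_{h\gamma_i}(\ppi_h(g),p)\leq\nu$ and $\dist_{h'\gamma_i}(\ppi_{h'}(1),p')\leq\nu$. Supposing for contradiction that $b_h\geq a_{h'}$, set $t=b_h$; then $\dist_{h'\gamma_i}(\ppi_{h'}\beta(t),p')\geq 2D'-\nu>\nu$, so Lemma~\ref{lem:behrstockaxis} applied at $z=\beta(t)$ gives $\dist_{h\gamma_i}(\ppi_h\beta(t),p)\leq\nu$, whence $\dist_{h\gamma_i}(\ppi_h\beta(b_h),\ppi_h(g))\leq 2\nu<2D'$, contradicting the definition of $b_h$. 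For (ii) in the case $hU_i\neq h'U_i$, this is the classical hierarchy fact that the activity intervals of transverse relevant subsurfaces along a hierarchy path are disjoint and ordered by the BKMM order. I would translate this from the curve graphs of $hU_i$ and $h'U_i$ to the axes using the $\lambda'$-Lipschitz projection $\mfp_{h\gamma_i}$, with the lower bound $D'\geq\lambda'(D+50\lambda')$ from Lemma~\ref{lemma:lambdad} providing the slack needed for the axis intervals to inherit the separation.

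The main obstacle I anticipate is the transverse-subsurface case of (ii): transferring the classical hierarchy-path separation statement from subsurface curve graphs to axes requires careful bookkeeping of the $2D'$-threshold on the axes versus the $10\nu$-threshold at the subsurface level, and is precisely where the gap between $D$, $\nu$, and $D'$ engineered in Lemma~\ref{lemma:lambdad} becomes essential. Once that propagation is in place, everything else reduces to integer-interval arithmetic.
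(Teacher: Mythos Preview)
Your strategy---realising $<$ as the pullback of the natural order on disjoint integer intervals $[a_h,b_h]$, so that transitivity and antisymmetry come for free---is genuinely different from the paper's. The paper does not use the intervals at all for the lemma: it proves transitivity directly, noting that the only nontrivial case is $hU_i=h'U_i=h''U_i$ (mixed cases reduce to totality of the order on $\rel_{10\nu}(1,g)$), and handles that case by a short contradiction argument built from repeated applications of Lemma~\ref{lem:behrstockaxis}. Your route is more conceptual and has the bonus of actually justifying the ``by construction, $b_h<a_{h'}$'' assertion that the paper states without proof just before the lemma; the paper's route is shorter because it avoids the transverse-subsurface activity-interval bookkeeping you correctly flag as the main obstacle.

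There is one genuine gap in your argument for (ii) with $hU_i=h'U_i$. The step ``set $t=b_h$; then $\dist_{h'\gamma_i}(\ppi_{h'}\beta(t),p')\ge 2D'-\nu$'' does not follow from $t\ge a_{h'}$: the integer $a_{h'}$ is only the \emph{first} time the $h'\gamma_i$--projection is $2D'$--far from $\ppi_{h'}(1)$, not a threshold after which it remains far. If instead you argue in $h\gamma_i$ first (using the defining inequality at $b_h$) and then apply Lemma~\ref{lem:behrstockaxis}, you conclude $\dist_{h'\gamma_i}(\ppi_{h'}\beta(b_h),\ppi_{h'}(1))\le 2\nu<2D'$---but this still does not yield $b_h<a_{h'}$ directly, for the same reason. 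What you actually need is a coarse-monotonicity statement: the unparametrised $D'$--quasigeodesic $\ppi_{h'}\beta$ along the quasiline $h'\gamma_i$ cannot be $\ge 2D'$ from $\ppi_{h'}(1)$ at time $a_{h'}$ and then $\le 2\nu$ from it at a later time $b_h$ without a backtrack of size $\ge 2D'-2\nu$, which contradicts \ref{lemma:lambdad:d}. This is easy to supply, but it is the missing ingredient.
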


\begin{proof}
It suffices to check that $h\gamma_i<h'\gamma_i$, $h'\gamma_i<h''\gamma_i$ implies $h\gamma_i<h''\gamma_i$ in the case where $hU_i=h'U_i=h''U_i$, as the other cases follow from the fact that $\rel_{10\nu}(1,g)$ is totally ordered. We do this by contradiction, referring to Figure~\ref{fig:order}.

If the implication does not hold, then $\ppi_h(g)$ is $\nu$--far from $\mfp_{h\gamma_i}(h''\gamma_i)$, so $\ppi_{h''}(g)$ is $\nu$--close to $\mfp_{h''\gamma_i}(h\gamma_i)$ by Lemma~\ref{lem:behrstockaxis}. It follows that $\mfp_{h''\gamma_i}(h\gamma_i)$ is $20\nu$--far from both $\ppi_{h''}(1)$ and $\mfp_{h''\gamma_i}(h'\gamma_i)$, the former by $100D'$--relevance of $h''\gamma_i$ and then the latter by the fact that $h'\gamma_i<h''\gamma_i$. Applying Lemma~\ref{lem:behrstockaxis} with $z=1$ now shows that $\ppi_h(1)$ is $\nu$--close to $\mfp_{h\gamma_i}(h''\gamma_i)$. Just as when we considered $h''\gamma_i$, this tells us that $\mfp_{h\gamma_i}(h''\gamma_i)$ is $20\nu$--far from $\mfp_{h\gamma_i}(h'\gamma_i)$, this time by $100D'$--relevance of $h\gamma_i$ and the fact that $h\gamma_i<h'\gamma_i$. 

Now let $x\in H$ be any point such that $\ppi_{h'}(x)$ is $\nu$--far from both $\mfp_{h'\gamma_i}(h\gamma_i)$ and $\mfp_{h'\gamma_i}(h''\gamma_i)$. Then $\ppi_h(x)$ and $\ppi_{h''}(x)$ are, respectively, $\nu$--close to $\mfp_{h\gamma_i}(h'\gamma_i)$ and $\mfp_{h''\gamma_i}(h'\gamma_i)$, by Lemma~\ref{lem:behrstockaxis}. Comparing with the preceding paragraph, we find that $\ppi_h(x)$ is $10\nu$--far from $\mfp_{h\gamma_i}(h''\gamma_i)$, and simultaneously $\ppi_{h''}(x)$ is $10\nu$--far from $\mfp_{h''\gamma_i}(h\gamma_i)$, which contradicts Lemma~\ref{lem:behrstockaxis}.
\end{proof}

\begin{figure}[ht] 
\includegraphics[width=12cm]{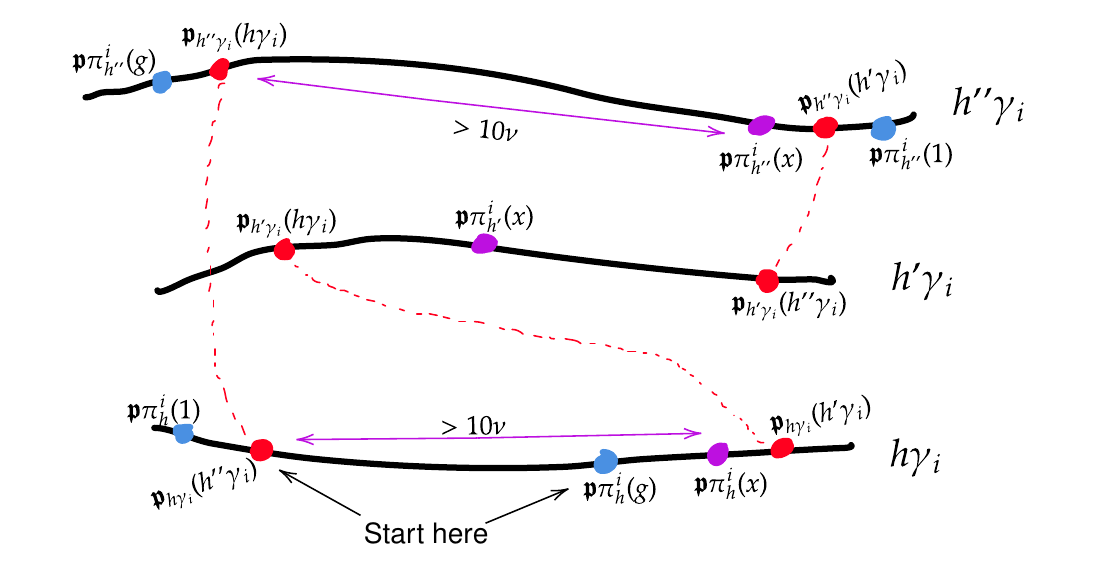}\centering
\caption{The contradiction obtained in the proof of Lemma~\ref{lem:totalorder}.} \label{fig:order}
\end{figure}

Recall that $\beta_h$ denotes the subpath $\beta|_{[a_h,b_h]}$, and $\ppi_h\beta_h$ is an unparametrised $D'$--quasigeodesic. Moreover, $\ppi_h(x)$ is $D'$--far from both $\ppi_h(1)$ and $\ppi_h(g)$ for all $x\in\beta_h$. By definition, $\psi_i(x)=\ppi_x(x)$ for all $x\in H$.

\begin{lemma} \label{lem:closeonbigguys}
If $h\gamma_i\in\rel^{\gamma_i}_{100D'}(1,g)$, then we have $\dist_{\Q_i}(\psi_i(x),\ppi_h(x))\leq6K$ for any $x\in\beta_h$. In particular, $\psi_i\beta_h\subset\N_{6K}(h\gamma_i)\subset\Q_i$ is an unparametrised $(D'+12K)$--quasigeodesic.
\end{lemma}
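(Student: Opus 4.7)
The plan is to combine the upper bound in the distance formula (Theorem~\ref{thm:bbfdf}) with Lemma~\ref{lem:behrstockaxis} and equation~\eqref{eq:nearrho}: proving $\dist_{\Q_i}(\psi_i(x),\ppi_h(x))\le 6K$ will amount to establishing $\dist^\flat_Z(\pi^\flat_Z(\psi_i(x)),\pi^\flat_Z(\ppi_h(x))) < K$ for every axis $Z\in A_i$, so that the summation in the distance formula contributes nothing. The case $Z=x\gamma_i=h\gamma_i$ is immediate because then $\psi_i(x)=\ppi_h(x)$ directly from the definitions of $\psi_i$ and $\ppi_h$.

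For $Z=h\gamma_i\ne x\gamma_i$, the target quantity $\dist_{h\gamma_i}(\ppi_h(x),\mfp_{h\gamma_i}(x\gamma_i))$ is bounded by $\nu$ via the moreover statement of Lemma~\ref{lem:behrstockaxis} when $xU_i=hU_i$, and by $\lambda'(E+1)$ via~\eqref{eq:nearrho} combined with the $\lambda'$--coarse Lipschitz property~\ref{lemma:lambdad:lambda} in the transverse case $xU_i\trans hU_i$. For $Z=x\gamma_i\ne h\gamma_i$, the strategy is first to control the endpoints by showing that both $\ppi_x(1)$ and $\ppi_x(g)$ lie close to $\mfp_{x\gamma_i}(h\gamma_i)$. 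Applying Lemma~\ref{lem:behrstockaxis} with $z=1$ and feeding in the bound $\mfp_{h\gamma_i}(x\gamma_i)\approx_\nu\ppi_h(x)$ from the moreover together with the fact that $x\in\beta_h$ forces $\dist_{h\gamma_i}(\ppi_h(1),\ppi_h(x))\ge 2D'\gg\nu$, we deduce $\dist_{h\gamma_i}(\ppi_h(1),\mfp_{h\gamma_i}(x\gamma_i))>\nu$, so the dichotomy in Lemma~\ref{lem:behrstockaxis} resolves to $\dist_{x\gamma_i}(\ppi_x(1),\mfp_{x\gamma_i}(h\gamma_i))\le\nu$. The analogous argument with $z=g$ (and Behrstock's inequality in place of Lemma~\ref{lem:behrstockaxis} for the transverse case) gives the same bound for $\ppi_x(g)$. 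Since $\ppi_x\beta$ is an unparametrised $D'$--quasigeodesic in $\C xU_i$ by~\ref{lemma:lambdad:d} with endpoints within $2\nu$ of each other, the Morse lemma in the $E$--hyperbolic space $\C xU_i$ bounds its image diameter, and in particular $\psi_i(x)=\ppi_x(x)$ lies within a constant much less than $K$ of $\mfp_{x\gamma_i}(h\gamma_i)$.

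The main obstacle is the remaining case $Z\ne x\gamma_i,h\gamma_i$, where I must show $\dist^\flat_Z(x\gamma_i,h\gamma_i)<K$. Assuming the contrary, projection axiom~(P1) forces $\mfp_{x\gamma_i}(Z)\approx\mfp_{x\gamma_i}(h\gamma_i)\approx\psi_i(x)$ and $\mfp_{h\gamma_i}(Z)\approx\mfp_{h\gamma_i}(x\gamma_i)\approx\ppi_h(x)$ within $O(\xi)$; writing $Z=z\gamma_i$ and unfolding via the moreover of Lemma~\ref{lem:behrstockaxis} (and its transverse analogue using~\eqref{eq:nearrho}) would make $z\gamma_i$ indistinguishable from $h\gamma_i$ as far as the projections of $1$ and $g$ are concerned, contradicting the position of $h\gamma_i$ in the total order on $\rel^{\gamma_i}_{100D'}(1,g)$ from Lemma~\ref{lem:totalorder}. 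Once the main $6K$ bound is established, the ``in particular'' statement follows mechanically: $\ppi_h\beta_h$ is an unparametrised $D'$--quasigeodesic in $h\gamma_i\subset\Q_i$ by~\ref{lemma:lambdad:d}, the axis $h\gamma_i$ is isometrically embedded in the quasitree $\Q_i$ by construction, and $\psi_i\beta_h$ lies pointwise $6K$--close to it, so $\psi_i\beta_h$ is an unparametrised $(D'+12K)$--quasigeodesic by a straightforward triangle-inequality computation.
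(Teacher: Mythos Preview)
Your overall strategy coincides with the paper's: use the upper bound of Theorem~\ref{thm:bbfdf}, and for each axis $Z=h'\gamma_i$ bound the projection distance by $K$ using Lemma~\ref{lem:behrstockaxis}, inequality~\eqref{eq:nearrho}, and axiom~\ref{projection1}. Your cases $Z=h\gamma_i$ and $Z=x\gamma_i$ are correct, and the paper treats them in essentially the same way (the paper folds the $Z=x\gamma_i$ analysis into its Claim~1 rather than separating it out, but the content is identical).

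The problem is in the residual case $Z\notin\{x\gamma_i,h\gamma_i\}$. You correctly get $\mfp_{h\gamma_i}(Z)\approx\ppi_h(x)$ via~\ref{projection1}, and you correctly indicate that Lemma~\ref{lem:behrstockaxis}/Behrstock then forces $\ppi_Z(1)$ and $\ppi_Z(g)$ to be close to $\mfp_Z(h\gamma_i)$. But your stated contradiction---with the total order of Lemma~\ref{lem:totalorder}---does not work, because nothing says $Z\in\rel^{\gamma_i}_{100D'}(1,g)$. Indeed, you have just shown that $\ppi_Z(1)$ and $\ppi_Z(g)$ are close to each other, so typically $Z$ is \emph{not} relevant and the order says nothing about it. The correct contradiction is the one you already used in the $Z=x\gamma_i$ case: by the ``moreover'' of Lemma~\ref{lem:behrstockaxis} (or~\eqref{eq:nearrho} in the transverse case), $\ppi_Z(x)$ is close to $\mfp_Z(x\gamma_i)$, hence at distance roughly $K$ from $\mfp_Z(h\gamma_i)$, while the endpoints $\ppi_Z(1),\ppi_Z(g)$ sit near $\mfp_Z(h\gamma_i)$. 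This contradicts~\ref{lemma:lambdad:d}, since $\ppi_Z\beta$ is an unparametrised $D'$--quasigeodesic and cannot have an interior point $K/2$--far from both endpoints when the endpoints are $D'$--close. This is exactly the paper's argument; replace the appeal to Lemma~\ref{lem:totalorder} with an appeal to~\ref{lemma:lambdad:d} and your proof is complete.
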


\begin{proof}
By the distance formula for quasitrees of metric spaces, Theorem~\ref{thm:bbfdf}, if the lemma does not hold then there is some $h'\gamma_i$ such that 
\begin{align}
\dist_{h'\gamma_i}\big(\mfp_{h'\gamma_i}\psi_i(x),\mfp_{h'\gamma_i}(\ppi_h(x))\big)\geq K. \label{eq:proofofcloseonbigguys}
\end{align}
Clearly $x\gamma_i$ and $h\gamma_i$ cannot be equal, for then $\psi_i(x)=\ppi_h(x)$.

If $h\gamma_i=h'\gamma_i$, then we have $\dist_{h\gamma_i}(\mfp_{h\gamma_i}\psi_i(x),\ppi_h(x))\geq K$. In this case, $xU_i$ and $hU_i$ cannot differ, for then $\mfp_{h\gamma_i}\psi_i(x)=\mfp_{h\gamma_i}(\rho^{xU_i}_{hU_i})$ is $2\lambda'E$--close to $\ppi_h(x)$ by inequality~\eqref{eq:nearrho} and \ref{lemma:lambdad:lambda}. However, $xU_i=hU_i$ gives a contradiction with the second statement of Lemma~\ref{lem:behrstockaxis}. Thus $h'\gamma_i\neq h\gamma_i$ and $h\gamma_i\neq x\gamma_i$.

\begin{claim} \label{claim:xh'close}
$\dist_{h\gamma_i}(\ppi_h(x),\mfp_{h\gamma_i}(h'\gamma_i))\leq\xi+\nu+2\lambda'E.$
\end{claim}

\begin{claimproof}
If $h'\gamma_i=x\gamma_i$ and $h'U_i=hU_i$, then the claim is given by Lemma~\ref{lem:behrstockaxis}. If $h'\gamma_i=x\gamma_i$ but $h'U_i\neq hU_i$, then $\mfp_{h\gamma_i}(h'\gamma_i)=\mfp_{h\gamma_i}(\rho^{xU_i}_{hU_i})$ is $2\lambda'E$--close to $\ppi_h(x)$ by inequality~\eqref{eq:nearrho} and~\ref{lemma:lambdad:lambda}. 

On the other hand, if $h'\gamma_i\neq x\gamma_i$, then since the set $H\cdot\{\gamma_i\}$ satisfies \ref{projection0} with constant $\xi$, we have $\dist_{h'\gamma_i}(\mfp_{h'\gamma_i}(h\gamma_i),\mfp_{h'\gamma_i}(x\gamma_i))\geq K-2\xi$. It follows from \ref{projection1} that $\dist_{h\gamma_i}(\mfp_{h\gamma_i}(h'\gamma_i),\mfp_{h\gamma_i}(x\gamma_i))\leq\xi$. If $xU_i=hU_i$ then the second statement of Lemma~\ref{lem:behrstockaxis} shows that $\dist_{h\gamma_i}(\ppi_h(x),\mfp_{h\gamma_i}(x\gamma_i)\leq\nu$, and we conclude by applying the triangle inequality. The alternative is that $xU_i\neq hU_i$, but then inequality~\eqref{eq:nearrho} shows that $\ppi_h(x)$ is $2\lambda' E$--close to $\mfp_{h\gamma_i}(\rho^{xU_i}_{hU_i})=\mfp_{h\gamma_i}(x\gamma_i)$, and we again apply the triangle inequality. 
\end{claimproof}

As noted before the proof, $\ppi_h(x)$ is $D'$--far from both $\ppi_h(1)$ and $\ppi_h(g)$, so by the choice of $D'$, Claim~\ref{claim:xh'close} ensures that these are both $\frac{D'}{2}$--far from $\mfp_{h\gamma_i}(h'\gamma_i)$. 

\begin{claim} \label{claim:1ghclose}
Both $\ppi_{h'}(1)$ and $\ppi_{h'}(g)$ are $\frac{D'}{2}$--close to $\mfp_{h'\gamma_i}(h\gamma_i)$.
\end{claim} 

\begin{claimproof}
If $hU_i=h'U_i$ then this is just Lemma~\ref{lem:behrstockaxis}. If $hU_i\neq h'U_i$, then both $\pi_{hU_i}(1)$ and $\pi_{hU_i}(g)$ must be $E$--far from $\rho^{h'U_i}_{hU_i}$, by \ref{lemma:lambdad:lambda}. The Behrstock inequality for the subsurfaces $hU_i$ and $h'U_i$ then gives that both $\pi_{h'U_i}(1)$ and $\pi_{h'U_i}(g)$ are $E$--close to $\rho^{hU_i}_{h'U_i}$. But $\mfp_{h'\gamma_i}(h\gamma_i)=\mfp_{h'\gamma_i}(\rho^{hU_i}_{h'U_i})$ by definition, so this is enough, again by \ref{lemma:lambdad:lambda}.
\end{claimproof}

To conclude the proof, we show that $\ppi_{h'}(x)$ is $\frac{K}{2}$--far from $\mfp_{h'\gamma_i}(h\gamma_i)$. Together with Claim~\ref{claim:1ghclose}, this will contradict \ref{lemma:lambdad:d}, showing that the proposed $h'\gamma_i$ cannot exist. 

We have $h'\gamma_i\neq h\gamma_i$, and $\mfp_{h'\gamma_i}(\ppi_h(x))$ is an element of $\mfp_{h'\gamma_i}(h\gamma_i)$. Recall that $\mfp_{h'\gamma_i}(h\gamma_i)$ has diameter at most $\xi$. As a consequence of inequality~\eqref{eq:proofofcloseonbigguys}, it thus suffices to put an upper bound on $\dist_{h'\gamma_i}(\mfp_{h'\gamma_i}\psi_i(x),\ppi_{h'}(x))$, for the triangle inequality gives
\begin{align*}
\dist_{h'\gamma_i}(\mfp_{h'\gamma_i}(h\gamma_i),&\ppi_{h'}(x)) 
 \geq \dist_{h'\gamma_i}(\mfp_{h'\gamma_i}(\ppi_h(x)),\ppi_{h'}(x))-\xi \\
&\geq \dist_{h'\gamma_i}(\mfp_{h'\gamma_i}(\ppi_h(x)),\mfp_{h'\gamma_i}\psi_i(x))
    -\dist_{h'\gamma_i}(\mfp_{h'\gamma_i}\psi_i(x),\ppi_{h'}(x)) -\xi \\
&\geq K-\xi-\dist_{h'\gamma_i}(\mfp_{h'\gamma_i}\psi_i(x),\ppi_{h'}(x)).
\end{align*}
If $h'U_i=xU_i$, then this is just the second statement of Lemma~\ref{lem:behrstockaxis}. Otherwise, $\mfp_{h'\gamma_i}\psi_i(x)=\mfp_{h'\gamma_i}(\rho^{xU_i}_{h'U_i})$ is $D'$--close to $\ppi_{h'}(x)$ by inequality~\eqref{eq:nearrho} and \ref{lemma:lambdad:lambda}.
\end{proof}

\subsubsection*{\emph{\textbf{Approximating $\mathrm\psi_i\mathrm\beta$.}}} ~

For a large constant $M>100D'$, to be specified later, enumerate $\rel^{\gamma_i}_M(1,g)=\{h_2\gamma_i,h_4\gamma_i,\dots,h_n\gamma_i\}$, with even subscripts, according to the total order of Lemma~\ref{lem:totalorder}. For even $j$, we have corresponding integers $a_{h_j},b_{h_j}\in\{0,\dots,T\}$ and a subpath $\beta_{h_j}=\beta|_{[a_{h_j},b_{h_j}]}$. Recall that $x_{h_j}$ and $y_{h_j}$ respectively denote $\beta(a_{h_j})$ and $\beta(b_{h_j})$; see Figure~\ref{figure:xhyh}. For the remainder of the proof of Proposition~\ref{prop:hptoqg}, we shall abbreviate $a_{h_j}$ to $a_j$ and $\beta_{h_j}$ to $\beta_j$ etc., omitting the $h$. Set $b_0=0$ and $a_{n+2}=T$. 
For odd $j$, let $\alpha_j=\beta|_{[b_{j-1},a_{j+1}]}$, so that we have a decomposition of $\beta$ as the concatenation $\alpha_1\beta_2\alpha_3\dots\beta_n\alpha_{n+1}$. 

\begin{lemma} \label{lem:fellowtravel}
Each $\psi_i\alpha_j$ is a quasigeodesic with constant independent of $M$.
\end{lemma}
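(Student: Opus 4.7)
The plan is to combine the distance formula in $\Q_i$ (Theorem~\ref{thm:bbfdf}) with the axis-quasigeodesic property from \ref{lemma:lambdad:d}, reducing the quasigeodesic estimate for $\psi_i\alpha_j$ to controlling per-axis contributions.

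The first step is to classify axes according to how they interact with $\alpha_j=\beta|_{[b_{j-1},a_{j+1}]}$. For each $h_k\gamma_i\in\rel^{\gamma_i}_M(1,g)$, either $k\le j-1$ (so that $b_{h_k}\le b_{j-1}$, hence $\alpha_j$ lies entirely after $b_{h_k}$) or $k\ge j+1$ (so that $a_{j+1}\le a_{h_k}$, hence $\alpha_j$ lies entirely before $a_{h_k}$). By the definitions of $a_{h_k},b_{h_k}$ and the fact that $\ppi_{h_k}\beta$ is an unparametrised $D'$--quasigeodesic, in both cases $\ppi_{h_k}\alpha_j$ has diameter at most $O(D')$ in $h_k\gamma_i$, sitting near $\ppi_{h_k}(g)$ or $\ppi_{h_k}(1)$ respectively. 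For axes $h\gamma_i\notin\rel^{\gamma_i}_M(1,g)$, the restriction $\ppi_h\alpha_j$ is still an unparametrised $D'$--quasigeodesic as a subpath of $\ppi_h\beta$.

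The second step is to verify the no-backtracking inequality. Given $x,y,z\in\alpha_j$ with $y$ between $x$ and $z$, apply the lower bound of Theorem~\ref{thm:bbfdf} to the pair $(x,z)$ and the upper bound to each of $(x,y)$ and $(y,z)$, after converting between $\pi^\flat$--distances and $\ppi_h$--distances using Lemma~\ref{lem:behrstockaxis} and inequality~\eqref{eq:nearrho} as was done in the proof of Lemma~\ref{lem:closeonbigguys}. The axes in $\rel^{\gamma_i}_M(1,g)$ contribute only $O(D')$, which (after choosing $K>D'$) drops below the cutoff in all three sums. On the remaining axes, the fact that each $\ppi_h\alpha_j$ is a $D'$--quasigeodesic on a subpath gives the additivity estimate
\[ \dist_{h\gamma_i}(\ppi_h(x),\ppi_h(y))+\dist_{h\gamma_i}(\ppi_h(y),\ppi_h(z)) \le D'\dist_{h\gamma_i}(\ppi_h(x),\ppi_h(z))+D'. \]
Summing and applying the distance formula twice yields
\[ \dist_{\Q_i}(\psi_i(x),\psi_i(y))+\dist_{\Q_i}(\psi_i(y),\psi_i(z)) \le C_1\dist_{\Q_i}(\psi_i(x),\psi_i(z))+C_2, \]
with $C_1,C_2$ depending only on $D',K,K',\xi$, hence not on $M$. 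Combined with coarse Lipschitzness of $\psi_i$ (which follows from $\Psi$ being a quasiisometric embedding), this gives the desired unparametrised quasigeodesic estimate.

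The main obstacle is the mismatch between the cutoffs $K$ and $K'$ in the distance formula: an axis contributing to the upper bound at $K$ need not contribute to the lower bound at $K'$. This is handled by absorbing the $O(K')$ defect of each such axis into the additive constant, and using the projection axioms~\ref{projection2} together with the quasigeodesic property of $\ppi_h\beta$ to bound the number of problematic axes in terms of $K'$ alone, independent of $M$.
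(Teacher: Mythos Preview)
Your approach is quite different from the paper's. The paper proves the lemma by showing that $\psi_i\alpha_j$ fellow-travels the geodesic $\hat\alpha_j$ in $\Q_i$ joining its endpoints. To do this it decomposes $\alpha_j$ using the axes $h\gamma_i\in\rel^{\gamma_i}_{100D'}(1,g)$ lying between $h_{j-1}\gamma_i$ and $h_{j+1}\gamma_i$ in the total order: for each such axis, Lemma~\ref{lem:closeonbigguys} says that $\psi_i\beta_h$ is a quasigeodesic in $\N_{6K}(h\gamma_i)$, and \cite[Prop.~4.11]{bestvinabrombergfujiwara:constructing} guarantees that $\hat\alpha_j$ passes uniformly close to $h\gamma_i$; the Morse lemma then pins $\psi_i\beta_h$ to a subsegment of $\hat\alpha_j$. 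The gaps between consecutive $\beta_h$'s have empty $\rel^{\gamma_i}_{100D'}$, so the distance formula forces their $\psi_i$--images to have bounded diameter. All constants come from $100D'$, not $M$.

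Your distance-formula route has a genuine gap at exactly the point you flag in the last paragraph. Axiom~\ref{projection2} gives only finiteness of the set of axes with large projection, with no uniform bound, so it cannot control the number of ``problematic'' axes $Z$ contributing at threshold $K$ for $(x,y)$ or $(y,z)$ but below threshold $K'$ for $(x,z)$. One can show that every such $Z$ satisfies $a^{xz}_Z\ge K-C$ (via the additive per-axis estimate, which is what actually holds on a quasiline, rather than the multiplicative version you wrote), and with extra input from \cite{bestvinabrombergfujiwara:constructing} one could then bound $|S|$ linearly in $\dist_{\Q_i}(\psi_i(x),\psi_i(z))$. But even granting that, the resulting inequality has the form
\[
\dist_{\Q_i}(\psi_i(x),\psi_i(y))+\dist_{\Q_i}(\psi_i(y),\psi_i(z)) \le C_1\,\dist_{\Q_i}(\psi_i(x),\psi_i(z))+C_2
\]
with $C_1>1$, and this does \emph{not} yield an unparametrised quasigeodesic: the Gromov product $(\psi_i(x),\psi_i(z))_{\psi_i(y)}$ is bounded only by $\tfrac12\big((C_1-1)\dist_{\Q_i}(\psi_i(x),\psi_i(z))+C_2\big)$, which is unbounded. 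So the final step---``combined with coarse Lipschitzness of $\psi_i$, this gives the desired unparametrised quasigeodesic estimate''---fails. To make a distance-formula argument work you would need the \emph{additive} no-backtracking inequality, which in turn requires bounding $|S|$ independently of $x,y,z$; that is precisely what the paper's geometric argument via \cite[Prop.~4.11]{bestvinabrombergfujiwara:constructing} circumvents.
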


\begin{proof}
Let $\hat\alpha_j$ be a geodesic in $\Q_i$ from $\psi_i(y_{j-1})$ to $\psi_i(x_{j+1})$. It suffices to show that $\psi_i\alpha_j$ fellow-travels $\hat\alpha_j$ with constant independent of $M$.

By \cite[Prop.~4.11]{bestvinabrombergfujiwara:constructing}, there is a constant $q_1=q_1(K,E,\lambda)$ such that if $h\gamma_i\in\rel^{\gamma_i}_{100D'}(1,g)$ satisfies $h_{j-1}\gamma_i<h\gamma_i<h_{j+1}\gamma_i$ for some odd $j$, then
\[
\hat\alpha_j \text{ comes } q_1\text{--close to } \mfp_{h\gamma_i}\psi_i(y_{j-1}).
\]
We know from Lemma~\ref{lem:closeonbigguys} that $\dist_{\Q_i}(\ppi_{h_{j-1}}(y_{j-1}),\psi_i(y_{j-1}))\le6K$, and \cite[Cor.~4.10]{bestvinabrombergfujiwara:constructing} states that $\mfp_{h\gamma_i}:\Q_i\to h\gamma_i$ is coarsely Lipschitz. Because $\ppi_{h_{j-1}}(y_{j-1})\in h_{j-1}\gamma_i$, this means that there is a constant $q_2=q_2(K,E,\lambda)$ such that 
\[
\dist_{\Q_i}(\mfp_{h\gamma_i}(h_{j-1}\gamma_i),\mfp_{h\gamma_i}\psi_i(y_{j-1})) \hsp\le\hsp q_2.
\]

The construction of $x_h$ dictates that $\ppi_h(x_h)$ be $10D'$--close to $\ppi_h(1)$, which itself is $\nu$--close to $\mfp_{h\gamma_i}(h_{j-1}\gamma_i)$ because of how we defined the ordering on $\rel^{\gamma_i}_{100D'}(1,g)$. That is, 
\[
\dist_{\Q_i}(\mfp_{h\gamma_i}(h_{j-1}\gamma_i),\ppi_h(x_h)) \hsp\le\hsp 10D' +\nu.
\]
Also, Lemma~\ref{lem:closeonbigguys} shows that 
\[
\dist_{\Q_i}(\psi_i(x_h),\ppi_h(x_h)) \hsp\le\hsp 6K. 
\]

Combining these inequalities, we find that $\hat\alpha_j$ comes $(q_1+q_2+10D'+\nu+6K)$--close to $\psi_i(x_h)$. A similar argument proves that it comes just as close to $\psi_i(y_h)$. Recall that $h\gamma_i$ is $100D'$--relevant, and in particular that $\psi_i\beta_h=\psi_i\alpha_j|_{[a_h,b_h]}$ is a quasigeodesic, by Lemma~\ref{lem:closeonbigguys}. The Morse lemma now tells us that $\psi_i\alpha_j|_{[a_h,b_h]}$ fellow-travels a subgeodesic of $\hat\alpha_j$, with constant independent of~$M$.

On the other hand, suppose that $x,y\in H$ have $\rel^{\gamma_i}_{100D'}(x,y)=\varnothing$. For any $h\in H$ we have $\mfp_{h\gamma_i}\psi_i(x)\in\mfp_{h\gamma_i}(x\gamma_i)$, and similarly with $y$ in place of $x$, so in this case we must have $\dist_{h\gamma_i}(\mfp_{h\gamma_i}\psi_i(x),\mfp_{h\gamma_i}\psi_i(y))\leq100D'+2\xi<K$ for all $h\in H$. The distance formula for quasitrees of metric spaces, Theorem~\ref{thm:bbfdf}, therefore shows that $\psi_i(x)$ and $\psi_i(y)$ are $6K+\delta$--close, where $\delta$ is the constant from inequality~\eqref{inequality:changedistance}. Taking this together with the conclusion of the previous paragraph completes the proof.
\end{proof}

We have shown that $\psi_i\beta$ decomposes as a concatenation of quasigeodesics. We use a local-to-global statement to conclude that $\psi_i\beta$ is itself a quasigeodesic. The proof is due to Hsu--Wise \cite{hsuwise:cubulating}, but the exact statement appears as \cite[Lem.~4.3]{hagenwise:cubulating:irreducible}.

\begin{lemma} \label{lem:hagenwise}
For any constants $k,\lambda$ and any function $f$, there is a constant $L_0$ satisfying the following. Suppose that $P$ is a path in a $k$--hyperbolic space that decomposes as a concatenation $P=\alpha_1\beta_2\alpha_3\dots\beta_n\alpha_{n+1}$ of $\lambda$--quasigeodesics. Suppose further that the~sets 
\[
\N_{3k+r}(\beta_j)\cap\beta_{j\pm2} \hsp\text{ and }\hsp \N_{3k+r}(\beta_j)\cap\alpha_{j\pm1}
\]
all have diameter at most $f(r)$. If every $\beta_j$ has length at least $L_0$, then $P$ is a uniform quasigeodesic.
\end{lemma}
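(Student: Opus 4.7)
To prove that $P$ is a uniform quasigeodesic, I would show that its length is linearly controlled by the distance between its endpoints. Let $p$ and $q$ be the start and end of $P$, and let $\sigma$ be a geodesic from $p$ to $q$ in the ambient $k$--hyperbolic space. By the Morse lemma, each $\lambda$--quasigeodesic subpath $\beta_j$ (and each $\alpha_j$) lies within a Morse constant $M = M(\lambda,k)$ of a geodesic with the same endpoints. The plan is to choose $L_0$ large enough that the closest-point projections of the $\beta_j$ onto $\sigma$ are forced to be correctly ordered and essentially disjoint, so that summing gives the desired bound on $P$.

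For each $j$, let $\pi_j \subset \sigma$ denote the image of a closest-point projection of $\beta_j$ (or of its geodesic shadow) onto $\sigma$. In a $k$--hyperbolic space, a $\lambda$--quasigeodesic projects onto $\sigma$ as a set whose diameter is at least $\mathrm{length}(\beta_j) - C$ for some $C = C(k,\lambda)$, unless the quasigeodesic folds back near $\sigma$. The bulk of the argument is to rule out such folding and to show monotonicity of the sequence $\pi_2,\pi_4,\dots,\pi_n$ along $\sigma$.

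Concretely, I would argue by contradiction. Suppose the projections are not ordered, so that $\pi_{j+2}$ begins strictly before $\pi_j$ ends (or equivalently, after passing to geodesic shadows, two of them have long reversed overlap on $\sigma$). Thin-triangle considerations then force a long subsegment of $\beta_{j+2}$ to enter the $(3k+r)$--neighborhood of $\beta_j$ for $r$ of order $M$, in direct contradiction with the hypothesis that $\N_{3k+r}(\beta_j) \cap \beta_{j+2}$ has diameter at most $f(r)$, provided $L_0$ exceeds $f(O(M)) + O(k+M)$. The same style of argument, using the bound on $\N_{3k+r}(\beta_j) \cap \alpha_{j\pm 1}$, rules out a connector $\alpha_{j\pm 1}$ from fellow-travelling $\beta_j$ backwards over a long segment. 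Choosing $L_0$ large enough in terms of $k$, $\lambda$, and the function $f$ handles both obstructions simultaneously.

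\textbf{Main obstacle.} The technical core is the careful choice of constants in the thin-triangle/Morse argument that produces the $(3k+r)$--neighborhood inclusion from a failure of projection ordering: one must track exactly how close points on $\beta_j$ and $\beta_{j+2}$ (or on $\beta_j$ and $\alpha_{j\pm 1}$) are forced to be, in terms of $k$, $M$, and the amount of backtracking, and then match this to the exponent $r$ in the hypothesis. Once monotonicity of the $\pi_j$ is established, each $\beta_j$ contributes at least $L_0 - C$ to $\dist(p,q)$ along $\sigma$, bounding the number of pieces; each $\alpha_j$ is a $\lambda$--quasigeodesic whose endpoints lie near consecutive projection intervals, so its length is controlled by $\lambda$ and the spacing along $\sigma$. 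Summing gives $\mathrm{length}(P) \le K\,\dist(p,q) + K$ for some $K = K(k,\lambda,f)$, which together with the fact that subpaths of $P$ satisfy the same hypotheses yields the uniform quasigeodesic conclusion.
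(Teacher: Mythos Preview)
The paper does not prove this lemma: it is quoted from \cite{hagenwise:cubulating:irreducible}, with the proof attributed to Hsu--Wise \cite{hsuwise:cubulating}. So there is no in-paper argument to compare against; the relevant comparison is with the standard local-to-global approach used in those references.

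Your proposal has a genuine gap at the first substantive step. You assert that in a $k$--hyperbolic space a $\lambda$--quasigeodesic $\beta_j$ projects to $\sigma$ with diameter at least $\operatorname{length}(\beta_j)-C$ ``unless the quasigeodesic folds back near $\sigma$''. This is false: a quasigeodesic that is far from $\sigma$ can have arbitrarily large length while projecting to a set of diameter bounded in terms of $k$ alone. The diameter of $\pi_j$ is controlled by the distance between the projections of the \emph{endpoints} of $\beta_j$, not by the length of $\beta_j$, and there is nothing in the hypotheses that places $\beta_j$ close to $\sigma$ a priori---that is precisely the conclusion you are trying to reach. Consequently the monotonicity/disjointness argument for the $\pi_j$ never gets started, and the later claim that ``each $\beta_j$ contributes at least $L_0-C$ to $\dist(p,q)$ along $\sigma$'' is unjustified. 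The contradiction step has a related problem: overlap of $\pi_j$ and $\pi_{j+2}$ on $\sigma$ does not by itself force $\beta_j$ and $\beta_{j+2}$ to be close to one another, only that geodesics between them pass near $\sigma$.

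The argument in the cited references avoids this circularity by working locally rather than by projecting to a global geodesic. One shows, using the bounded-overlap hypotheses, that any two or three consecutive pieces concatenate to a uniform quasigeodesic (this is where the function $f$ and the threshold $L_0$ enter: long $\beta_j$ with small backtracking into its neighbours forces the concatenation to make definite progress). This makes $P$ an $(L,\lambda',\eps')$--local quasigeodesic for suitable $L$ depending on $k,\lambda,f$, and then Gromov's local-to-global theorem for quasigeodesics in hyperbolic spaces yields the conclusion. If you want to salvage your projection idea, you would need to run it inductively---prove first that $\alpha_1\beta_2\cdots\beta_j$ is a uniform quasigeodesic, so that $\beta_j$ genuinely lies near the geodesic between the current endpoints---but at that point you are essentially reproducing the local-to-global argument.
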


We need one more lemma in order to be able to apply this result. 

\begin{lemma} \label{lem:coarseintersections}
There is a function $f$, independent of $M$, such that the subsets 
\[
\N_r(\psi_i\beta_j)\cap\N_r(\psi_i\beta_{j\pm2}) \hspace{5mm} \text{and} \hspace{5mm} \psi_i\alpha_j\cap\N_r(\psi_i\beta_{j\pm1})
\]
of $\Q_i$ have diameters bounded by $f(r)$ for all $r$.
\end{lemma}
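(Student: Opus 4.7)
The plan is to combine Lemma~\ref{lem:closeonbigguys}, which locates each $\psi_i\beta_j$ inside $\N_{6K}(h_j\gamma_i)$, with the general principle from the BBF quasitree-of-metric-spaces construction that any two distinct fibers $h\gamma_i\neq h'\gamma_i$ in $\Q_i$ meet coarsely only near the mutual projection sets $\mfp_{h\gamma_i}(h'\gamma_i)$, which have diameter at most $\xi$ by \ref{projection0}. Throughout, I would keep a close eye on the constants to make sure no dependence on $M$ creeps in.

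For $\N_r(\psi_i\beta_j)\cap\N_r(\psi_i\beta_{j\pm2})$, I would first observe that any point $u$ in this set lies within $r+6K$ of both $h_j\gamma_i$ and $h_{j\pm2}\gamma_i$, so there are nearest points $p\in h_j\gamma_i$ and $p'\in h_{j\pm2}\gamma_i$ joined by a geodesic of length at most $2(r+6K)$. Now \cite[Prop.~4.11]{bestvinabrombergfujiwara:constructing} --- already used in the proof of Lemma~\ref{lem:fellowtravel} --- guarantees that this geodesic passes within a uniform distance $q_1$ of $\mfp_{h_j\gamma_i}(h_{j\pm2}\gamma_i)$, so $p$ itself sits within $2(r+6K)+q_1$ of a set of diameter at most $\xi$. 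This forces $u$ into a region whose diameter is linear in $r$, with constants depending only on $K$, $E$, $\lambda$, and $\xi$.

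For $\psi_i\alpha_j\cap\N_r(\psi_i\beta_{j-1})$, the $j+1$ case being symmetric, I would pass to the extended projection $\mfp_{h_{j-1}\gamma_i}\colon\Q_i\to h_{j-1}\gamma_i$, which is coarsely Lipschitz by \cite[Cor.~4.10]{bestvinabrombergfujiwara:constructing} and close to the identity on $h_{j-1}\gamma_i$. Given $u=\psi_i(x)$ with $x\in\alpha_j$ and $\psi_i(x)$ within $r+6K$ of $h_{j-1}\gamma_i$, this bounds $\dist_{\Q_i}(\psi_i(x),\mfp_{h_{j-1}\gamma_i}(\psi_i(x)))$ linearly in $r$. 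A short case analysis on the relationship between $xU_i,x\gamma_i$ and $h_{j-1}U_i,h_{j-1}\gamma_i$ --- using the ``moreover'' statement of Lemma~\ref{lem:behrstockaxis} when $xU_i=h_{j-1}U_i$, and inequality~\eqref{eq:nearrho} together with \ref{lemma:lambdad:lambda} when $xU_i\neq h_{j-1}U_i$ --- then shows that $\mfp_{h_{j-1}\gamma_i}(\psi_i(x))$ is uniformly close to $\ppi_{h_{j-1}}(x)$. Finally, by maximality in the definition of $b_{j-1}$, every $x\in\alpha_j$ has $\ppi_{h_{j-1}}(x)$ within $2D'$ of the fixed point $\ppi_{h_{j-1}}(g)\in h_{j-1}\gamma_i$. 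Chaining these bounds confines $\psi_i(x)$ to a ball around $\ppi_{h_{j-1}}(g)$ of radius linear in $r$.

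The main obstacle I expect is the case analysis in the second part: one must control $\mfp_{h_{j-1}\gamma_i}(\psi_i(x))$ for \emph{arbitrary} $x\in\alpha_j$ rather than only for $x\in\beta_{h_{j-1}}$ as in Lemma~\ref{lem:closeonbigguys}, so the argument has to reassemble the ingredients that appear in the proof of Claim~\ref{claim:xh'close}, and in particular has to handle the subcase $x\gamma_i\neq h_{j-1}\gamma_i$ carefully. The other subtlety is simply bookkeeping: making sure the constants produced depend only on $K$, $E$, $\lambda$, $\xi$, $\nu$, $D'$, and $r$, so that one can later choose $M$ as large as needed to apply Lemma~\ref{lem:hagenwise}.
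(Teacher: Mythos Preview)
Your argument for the first intersection is essentially the paper's: both reduce to the fact that distinct fibres $h_j\gamma_i$ and $h_{j\pm2}\gamma_i$ have coarsely bounded mutual projections in $\Q_i$, using Lemma~\ref{lem:closeonbigguys} to pass from $\psi_i\beta_j$ to $\N_{6K}(h_j\gamma_i)$. The only cosmetic difference is that you cite \cite[Prop.~4.11]{bestvinabrombergfujiwara:constructing} where the paper cites \cite[Cor.~4.10]{bestvinabrombergfujiwara:constructing}.

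For the second intersection your route genuinely diverges from the paper's. The paper argues via the \emph{endpoints} of $\psi_i\alpha_j$: it invokes Lemma~\ref{lem:fellowtravel} to know that $\psi_i\alpha_j$ is a uniform quasigeodesic, shows that both endpoints $\psi_i(y_{j-1})$ and $\psi_i(x_{j+1})$ have closest-point projection to $h_{j\pm1}\gamma_i$ lying near the single coarse point $\mfp_{h_{j\pm1}\gamma_i}(h_{j\mp1}\gamma_i)$, and concludes by hyperbolicity that the entire quasigeodesic projects to a coarse point. Your argument is instead \emph{pointwise}: for each $x\in\alpha_j$ you show directly that $\mfp_{h_{j-1}\gamma_i}(\psi_i(x))$ is uniformly close to $\ppi_{h_{j-1}}(x)$ via the case analysis (reusing exactly the ingredients of Claim~\ref{claim:xh'close}), and then pin $\ppi_{h_{j-1}}(x)$ near $\ppi_{h_{j-1}}(g)$ using the maximality of $b_{j-1}$. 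This is correct and has the modest advantage of not depending on Lemma~\ref{lem:fellowtravel}, so the logical flow is slightly cleaner; the paper's endpoint argument, on the other hand, avoids re-running the case analysis. Either way the constants depend only on $K$, $E$, $\lambda$, $\xi$, $\nu$, $D'$, as required.
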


\begin{proof}
\cite[Cor.~4.10]{bestvinabrombergfujiwara:constructing} shows that if $h'\gamma_i\neq h\gamma_i$, then the closest-point projection of $h'\gamma_i$ to $h\gamma_i$ inside $\Q_i$ is coarsely equal to $\mfp_{h\gamma_i}(h'\gamma_i)$, which has diameter at most $\xi$. Thus $\N_r(h\gamma_i)\cap\N_r(h'\gamma_i)$ has diameter bounded in terms of $r$. Now recall from Lemma~\ref{lem:closeonbigguys} that $\psi_i\beta_j$ is contained in the $6K$--neighbourhood of $h_j\gamma_i$. This is enough for the first intersection, because \cite[Cor.~4.10]{bestvinabrombergfujiwara:constructing} says that closest-point projection to $h\gamma_i$ in $\Q_i$ is coarsely Lipschitz.

This also bounds the Hausdorff-distance between $\mfp_{h_{j+1}\gamma_i}(h_{j-1}\gamma_i)$ and the closest-point projection of $\psi_i(x_{j+1})$ to $h_{j+1}\gamma_i$ as follows. The latter of the two is a bounded distance from $\mfp_{h_{j+1}\gamma_i}\psi_i(x_{j+1})$, which itself is $6K$--close to $\ppi_{h_{j+1}}(x_{j+1})$ by Lemma~\ref{lem:closeonbigguys}, and this point is $10D'$--close to $\mfp_{h_{j+1}\gamma_i}(h_{j-1}\gamma_i)$ by construction of the ordering. 

Moreover, $\ppi_{h_{j-1}}(y_{j-1})$ is $6K$--close to $\mfp_{h_{j-1}\gamma_i}\psi_i(y_{j-1})$, again because of Lemma~\ref{lem:closeonbigguys}, and this bounds the Hausdorff-distance between $\mfp_{h_{j+1}\gamma_i}(h_{j-1}\gamma_i)$ and the closest-point projection of $\psi_i(y_{j-1})$ to $h_{j+1}\gamma_i$, by \cite[Cor.~4.10]{bestvinabrombergfujiwara:constructing} as before. 

According to Lemma~\ref{lem:fellowtravel}, $\psi_i\alpha_j$ is a quasigeodesic from $\psi_i(y_{j-1})$ to $\psi_i(x_{j+1})$, and we have just seen that the closest-point projections to $h_{j+1}\gamma_i$ of both of these endpoints are at a bounded distance from $\mfp_{h_{j+1}\gamma_i}(h_{j-1}\gamma_i)$. This shows that the closest-point projection to $h_{j+1}\gamma_i$ of the whole quasigeodesic $\psi_i\alpha_j$ is a coarse point. A similar argument works for $h_{j-1}\gamma_i$. The bound for the second intersection follows because Lemma~\ref{lem:closeonbigguys} shows that $\psi_i\beta_{j\pm1}\subset\N_{6K}(h_{j\pm1}\gamma_i)$.
\end{proof}

\subsubsection*{\emph{\textbf{Conclusion of the proof of Proposition~\ref{prop:hptoqg}.}}} ~

In light of Lemmas~\ref{lem:closeonbigguys},~\ref{lem:fellowtravel}, and~\ref{lem:coarseintersections}, the conditions of Lemma~\ref{lem:hagenwise} are met (up to parametrisation) by the path $\psi_i\beta=(\psi_i\alpha_1)(\psi_i\beta_2)\dots(\psi_i\alpha_{n+1})$, with all constants independent of $M$. There is therefore a constant $L_0$, independent of $M$, such that if every $\psi_i\beta_j$ has length at least $L_0$ then $\psi_i\beta$ is an unparametrised quasigeodesic.

Since $L_0$ and $K$ are independent of $M$, we can now take $M=L_0+20K$, so that $\dist_{h_j\gamma_i}(\ppi_{h_j}(1),\ppi_{h_j}(g))\ge L_0+20K$ for each $j$. Since $h_j\gamma_i$ is totally geodesically embedded in $\Q_i$ \cite[Lem.~4.2]{bestvinabrombergfujiwara:constructing}, the same inequality holds in $\Q_i$. 

The length of $\psi_i\beta_j$ is at least the distance between its endpoints $\psi_i(x_j)$ and $\psi_i(y_j)$, which are $6K$--close to $\ppi_{h_j}(x_j)$ and $\ppi_{h_j}(y_j)$, respectively, by Lemma~\ref{lem:closeonbigguys}. In turn, these are $10D'$--close to $\ppi_{h_j}(1)$ and $\ppi_{h_j}(g)$, respectively. Thus the length of $\psi_i\beta_j$ is at least $\dist_{\Q_i}(\ppi_{h_j}(1),\ppi_{h_j}(g))-20D'-12K\geq L_0$. Hence $\psi_i\beta$ is an unparametrised quasigeodesic, with constant $q$ bounded in terms of the various constants of this section, all of which are defined in terms of $E$ and $\lambda$ only. This completes the proof of Proposition~\ref{prop:hptoqg}. \hfill\qed

\begin{remark}
Although the quasilines produced in \cite{bestvinabrombergfujiwara:proper} in many ways behave similarly to the domains of a hierarchically hyperbolic structure, they do not actually provide one for $\mcg(S)$, because that would imply that $\mcg(S)$ were virtually abelian \cite[Cor.~4.2]{petytspriano:unbounded}. To get such a structure, one would have to include the cone-offs of curve graphs by the axes.
\end{remark}

\section{Proof using hyperbolic cones} \label{section:buyalo}

In this section, we use a construction of Buyalo--Dranishnikov--Schroeder \cite{buyalodranishnikovschroeder:embedding} to prove Theorem~\ref{mthm:hyp_asdim_ccc}, which states that hyperbolic spaces with finite asymptotic dimension are quasiisometric to CAT(0) cube complexes. We then apply this to show that colourable hierarchically hyperbolic groups admit quasimedian quasiisometric embeddings in finite products of CAT(0) cube complexes. In the case of mapping class groups, Hume \cite{hume:embedding} used a related construction of Buyalo \cite{buyalo:capacity} to quasiisometrically embed $\mcg(S)$ in a finite product of trees. However, taking care of medians allows us to apply Proposition~\ref{proposition:hagenpetyt2.11} to obtain a quasiisometry to a CAT(0) cube complex, establishing Theorem~\ref{thm:colourable_HHGs_quasicubical}, which implies Theorem~\ref{thm:mcgisacubecomplex}.

\subsection{Embedding hyperbolic spaces in finite products of trees} \label{subsection:bds_embedding}

Here we summarise the construction of an embedding of a hyperbolic space into a finite product of (not necessarily locally finite) trees as described in \cite[\S7--9]{buyalodranishnikovschroeder:embedding}. Following \cite{bonkschramm:embeddings}, we say that a hyperbolic space $X$ is \emph{visual} if for some basepoint $x_0\in X$ there is a constant $D$ such that every $x\in X$ lies on a $D$--quasigeodesic ray emanating from~$x_0$.

\begin{lemma} \label{lem:visualisation}
Every hyperbolic space $X'$ admits a median isometric embedding in a visual hyperbolic space $X$ with \emph{asymptotic dimension} $\asdim X=\max\{1,\asdim X'\}$. 
\end{lemma}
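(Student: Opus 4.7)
The plan is to enlarge $X'$ into a visual hyperbolic space by attaching infinite geodesic rays at points of a net, using the induced path metric. Concretely, fix a maximal $1$-net $N \subset X'$ and set
\[
X \;=\; X' \;\cup\; \bigcup_{x\in N} R_x,
\]
where each $R_x \cong [0,\infty)$ is glued to $X'$ by identifying $R_x(0)$ with $x$. The inclusion $X' \hookrightarrow X$ is automatically isometric, since each attached ray meets $X'$ only at a single point and thus provides no shortcut. The inclusion is also median-preserving for free in this hyperbolic setting: by Proposition~\ref{prop:hyperbolic_median_unique} coarse medians on hyperbolic spaces are unique up to bounded error, and Lemma~\ref{lem:qieimpliesqm} implies that any isometric embedding of hyperbolic spaces is quasimedian.

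Hyperbolicity of $X$ follows from a direct verification of the Gromov four-point condition: Gromov products involving points of attached rays differ from those computed in $X'$ only by a shift equal to the ray parameter, so the hyperbolicity constant is preserved up to an additive error. Visuality is established by fixing some $x_0 \in N$ and taking the basepoint $z_0 = R_{x_0}(T)$ for $T \gg 0$. For any $p \in X$: if $p$ lies on some attached ray $R_y$, then $p$ is on the geodesic ray $z_0 \to x_0 \to y \to R_y(\infty)$; and if $p \in X'$, pick $x \in N$ within distance $1$ of $p$ and observe that the geodesic ray $z_0 \to x_0 \to x \to R_x(\infty)$ passes within distance $1$ of $p$, so a bounded perturbation gives a uniform quasi-geodesic ray through $p$.

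The main obstacle, and the bulk of the work, is the asymptotic dimension bound $\asdim X = \max\{1, \asdim X'\}$. The lower bound is immediate from the isometric inclusion $X' \hookrightarrow X$ together with the presence of an unbounded ray in $X$. For the upper bound, I would decompose $X = A_r \cup B_r$, where $A_r = \{p \in X : d(p,X') \leq r\}$ is the closed $r$-neighbourhood of $X'$ and $B_r = X \setminus A_r$ is the tail region. The set $A_r$ admits a $1$-Lipschitz retraction onto $X'$ with fibres of diameter $\leq r$, giving $\asdim A_r = \asdim X'$. For $B_r$, the crucial observation is that distinct ray-tails in $B_r$ are pairwise at distance greater than $2r$ in $X$, so at any scale $R \leq 2r$ any set of diameter $\leq R$ in $B_r$ lies entirely in one ray; covering each ray with a standard multiplicity-$2$ cover then yields $\asdim B_r \leq 1$. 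The finite union theorem for asymptotic dimension (together with suitable ``swelling'' of the covers to handle the interface between $A_r$ and $B_r$ at the target scale, so that the two cover families remain non-interfering) yields the desired $\asdim X \leq \max\{\asdim X', 1\}$. Controlling the interplay between $r$ and the target scale $R$ is the primary technical subtlety; it is resolved by allowing $r$ to grow with $R$ so that the ray-tails are always well-separated compared to the scale under consideration.
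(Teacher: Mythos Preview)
Your approach is essentially the same as the paper's: attach rays to obtain a visual hyperbolic space, then bound the asymptotic dimension. The paper simplifies by attaching a ray at \emph{every} point of $X'$ rather than just at a net (so visuality is immediate with basepoint $x_0\in X'$, since every $x\in X$ literally lies on a geodesic ray $x_0\to x\to r_x(\infty)$), and it dispatches the asymptotic-dimension upper and lower bounds by direct citation of Bell--Dranishnikov rather than sketching the cover argument you outline.
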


\begin{proof}
Given $X'$, let $X$ be the hyperbolic space obtained by attaching a ray $r_x=[0,\infty)$ to each point $x\in X'$. Fix $x_0\in X'\subset X$. We see that $X$ is visual by concatenating a geodesic from $x_0$ to $x$ with the geodesic ray $r_x$. The upper bound on asymptotic dimension is given by \cite[Thm~25]{belldranishnikov:asymptotic}, and the lower bound is given by \cite[Prop.~23]{belldranishnikov:asymptotic}. The inclusion map $X'\hookrightarrow X$ is median and isometric.
\end{proof} 

\subsubsection*{\textbf{\emph{The hyperbolic cone.}}} ~

We first construct the \emph{hyperbolic cone} on a complete, bounded metric space. The exact formulation we follow is one variant from a broader circle of ideas \cite{dranishnikov:onhypersphericity,buyalo:capacity,langschlichenmaier:nagata}. 

Let $Z$ be a complete, bounded metric space. For a point $z\in Z$, write $B_Z(z,r)$ for the open $r$--ball centred on $z$. For a subset $Y\subset Z$, we similarly write $B_Z(Y,r)=\bigcup_{y\in Y}B_Z(y,r)$. Recall that $Y\subset Z$ is said to be \emph{$r$--separated} if $\dist(y,y')\ge r$ for all $y,y'\in Y$, and $Y$ is an \emph{$r$--net} if $B_Z(Y,r)=Z$. Any maximal $r$--separated set is automatically an $r$--net. 

After rescaling $Z$, we assume that $\diam(Z)<1$. We define the hyperbolic cone on $Z$, denoted $\cone Z$, as follows. Fix any positive constant $r\le\frac16$. For each $k\ge0$, let $V_k$ be a maximal $r^k$--separated subset of $Z$. We associate with each $v\in V_k$ the ball $B(v)=B_Z(v,2r^k)$. Note that $V_0$ is a singleton. We write $o$ for the element of $V_0$; it has $B(o)=Z$.

Let $V=\bigsqcup_{k\ge0}V_k$. The vertex set of $\cone Z$ is $V$, and two vertices $v_1\in V_{k_1}$ and $v_2\in V_{k_2}$ are joined by an edge if either of the following holds.
\begin{itemize}
\item   $k_1=k_2$ and the closed balls $\bar B(v_1)$ and $\bar B(v_2)$ intersect. 
\item   $k_1=k_2-1$ and $B(v_2)\subset B(v_1)$. 
\end{itemize}
The \emph{level} $\ell(v)$ of a vertex $v\in V$ is the unique $k$ such that $v\in V_k$. We call $r$ the \emph{parameter} of the cone.

\begin{proposition}[{\cite[Prop.~2.1]{bourdonpajot:cohomologie},\cite[Thm~7.1]{buyalodranishnikovschroeder:embedding}}] \label{prop:cone_is_hyperbolic}
If $Z$ is a complete and bounded metric space, then $\cone Z$ is hyperbolic, and the hyperbolicity constant depends only on $\diam Z$ and the parameter $r$. If $X$ is a visual hyperbolic space, then there exists a quasiisometric embedding $X\to\cone(\partial X)$. 
\end{proposition}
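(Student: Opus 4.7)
The plan is to prove $\cone Z$ is hyperbolic via a coarse distance formula, then build the quasiisometric embedding $X\to\cone(\partial X)$ using visual metrics and shadows.

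For hyperbolicity, I would define the \emph{meeting level} $k^*(v_1,v_2)$ of two vertices $v_i\in V_{k_i}$ as the largest $k\le\min\{k_1,k_2\}$ admitting level-$k$ ancestors $u_i$ (via iterated parent-inclusion from $v_i$) with $\bar B(u_1)\cap\bar B(u_2)\neq\varnothing$. This is well-defined because $V_0=\{o\}$ with $B(o)=Z$. The target estimate is
\begin{equation*}
\dist_{\cone Z}(v_1,v_2)\;\asymp\;(k_1-k^*)+(k_2-k^*)+1.
\end{equation*}
The upper bound follows from an explicit ascent--descent path through common level-$k^*$ ancestors. For the lower bound, the key replacement lemma is that every horizontal edge $v\sim v'$ at level $k$ is inscribed in a V-shaped path of length $2$ via some level-$(k-1)$ vertex $u$ with $B(v),B(v')\subset B(u)$; the choice $r\leq\tfrac16$ ensures this, since the union of two intersecting $2r^k$-balls sits inside the $2r^{k-1}$-ball centred at any $r^{k-1}$-net point within $r^{k-1}$ of one of them. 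Iterating, any geodesic between $v_1,v_2$ essentially ascends to level $k^*$ and descends. Hyperbolicity then follows from the four-point condition applied to the six pairwise meeting levels of four vertices, which satisfy the same tripod inequalities as Gromov products on a tree, up to additive error controlled by $r$ and $\diam Z$.

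For the embedding, fix $x_0\in X$ and equip $\partial X$ with a visual metric $\rho$ of some parameter $a>1$ satisfying $\rho(\xi,\eta)\asymp a^{-(\xi|\eta)_{x_0}}$. Rescaling so $\diam\partial X<1$, take cone parameter $r=a^{-1}$ (replacing $a$ by a power if needed to enforce $r\leq\tfrac16$). For each $x\in X$, pick a $D$-quasigeodesic ray $\gamma_x$ from $x_0$ through $x$, let $\xi_x\in\partial X$ be its endpoint, and define $\Phi(x)$ to be any vertex $v\in V_{\lfloor\dist(x_0,x)\rfloor}$ whose ball contains $\xi_x$; such $v$ exists since $V_k$ is an $r^k$-net with $2r^k$-balls. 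Using the standard identity $(\xi_x|\xi_y)_{x_0}\asymp\min\{\dist(x_0,x),\dist(x_0,y)\}-\tfrac12\dist(x,y)$ together with the visual metric bounds, I would show that $k^*(\Phi(x),\Phi(y))\asymp(\xi_x|\xi_y)_{x_0}$, and the distance formula then gives $\dist_{\cone(\partial X)}(\Phi(x),\Phi(y))\asymp\dist(x,y)$. In one direction, a large meeting level $k^*=K$ forces both $\xi_x,\xi_y$ into a common $2r^K$-ball, bounding $\rho(\xi_x,\xi_y)\lesssim a^{-K}$ and thus $(\xi_x|\xi_y)_{x_0}\gtrsim K$; in the other, a large Gromov product yields exponentially small $\rho(\xi_x,\xi_y)$, hence nesting of the containing balls down to a large meeting level.

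The main obstacle is ruling out long horizontal detours in the hyperbolicity proof: a priori a geodesic might wander arbitrarily far at a single level rather than ascending to a common ancestor. The horizontal-to-vertical replacement sketched above is the core technical lemma, and relies essentially on the parameter constraint $r\leq\tfrac16$ together with the bound $\diam Z<1$ (the latter ensuring the level hierarchy terminates at $V_0$). Once this is in hand, the tree-like combinatorics of ancestors at each level, encoded by $k^*$, control the entire geometry of $\cone Z$ and deliver both the hyperbolicity statement with constants depending only on $\diam Z$ and $r$, and the quasiisometric embedding of any visual hyperbolic $X$.
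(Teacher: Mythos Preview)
The paper does not prove this proposition; it simply quotes it from \cite{bourdonpajot:cohomologie} and \cite{buyalodranishnikovschroeder:embedding}. Your outline is essentially the argument in those references: a coarse distance formula $\dist_{\cone Z}(v_1,v_2)\asymp k_1+k_2-2k^*(v_1,v_2)$ obtained by replacing horizontal edges with length--$2$ vertical detours (this is exactly where the constraint $r\le\tfrac16$ enters), hyperbolicity via the four-point inequality for $k^*$, and then the embedding of a visual $X$ by sending $x$ to a cone vertex at level $\approx\dist(x_0,x)$ whose ball contains the endpoint $\xi_x$ of a ray through $x$.

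There is one genuine slip in the embedding step. The quantity you must match with $k^*(\Phi(x),\Phi(y))$ is the Gromov product $(x\mid y)_{x_0}$ in $X$, not $(\xi_x\mid\xi_y)_{x_0}$. These differ badly when the rays are asymptotic: if $\xi_x=\xi_y$ but $\dist(x_0,x)\ll\dist(x_0,y)$, then $(\xi_x\mid\xi_y)_{x_0}=\infty$ while $k^*\le\ell(\Phi(x))$ is finite. The correct chain is
\[
k^*(\Phi(x),\Phi(y))\;\approx\;\min\big\{\ell(\Phi(x)),\,\ell(\Phi(y)),\,(\xi_x\mid\xi_y)_{x_0}\big\}\;\approx\;(x\mid y)_{x_0},
\]
the last step using that $x$ lies on a uniform quasigeodesic from $x_0$ to $\xi_x$. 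Relatedly, your ``standard identity'' is garbled: the expression $\min\{\dist(x_0,x),\dist(x_0,y)\}-\tfrac12\dist(x,y)$ is not the Gromov product of anything. Replace it with $(x\mid y)_{x_0}=\tfrac12\big(\dist(x_0,x)+\dist(x_0,y)-\dist(x,y)\big)$ throughout and the argument goes through as you intend.
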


Using \cite[Prop.~5.6]{bonkschramm:embeddings}, it is straightforward to see that if $X$ is visual then the embedding $X\to\cone(\partial X)$ is also coarsely onto, making it a quasiisometry. We shall not need this fact, though. Also see \cite[Thm~8.2]{bonkschramm:embeddings}.

\subsubsection*{\textbf{\emph{Embedding cones in products of trees.}}}  ~

Following \cite[\S8,9]{buyalodranishnikovschroeder:embedding}, we now describe how to construct the product of trees. We start by building a sequence of coloured open covers of our bounded metric space $Z$. Each colour gives rise to a tree.

The notion of \emph{capacity dimension} was introduced by Buyalo in \cite{buyalo:asymptotic}, and the reader is referred there for a definition. We say that a collection of subsets of $Z$ is \emph{disjoint} if no two of its members intersect. An $n$--\emph{colouring} of a collection $\cal V$ of subsets of $Z$ is a finite decomposition $\cal V=\bigcup_{c\in C}\cal V^c$, with $|C|=n$, such that each $\cal V^c$ is disjoint. Note that the $\cal V^c$ need not form a partition. 

\begin{proposition}[{\cite[Prop.~2.3]{buyalo:capacity}, \cite[Thm~8.2]{buyalodranishnikovschroeder:embedding}}] \label{prop:coloured_covering}
Let $Z$ be a complete, bounded metric space with capacity dimension $n$. There is a constant $\eps\in(0,1)$ such that for any sufficiently small $r\in(0,\frac\eps4)$ there exists a sequence $(\cal U_k=\bigcup_{c\in C}\cal U^c_k)_{k\ge0}$ of $(n+1)$--coloured open covers of $Z$ such that, for any hyperbolic cone on $Z$ with parameter $r$, the following are satisfied.
\begin{enumerate}
\enumlabel[(C1)]    $\sup\{\diam_Z(U):U\in\cal U_k\}<r^k$ for every $k$. Moreover, $\cal U^c_0=\{Z\}$ for all $c\in C$. \label{condition:level_0}
\enumlabel[(C2)]   For every $v\in V_{k+1}$ there exists $U\in\cal U_k$ such that $B(v)\subset U$. \label{condition:parent}
\enumlabel[(C3)]   For every $c\in C$ and for any distinct $U\in \cal U^c_k$ and $U'\in\cal U^c_{k'}$ with $k\le k'$, we have that $B_Z(U',\eps r^{k'})$ is either disjoint from $U$ or is a subset of it. \label{condition:disjointness}
\end{enumerate}
\end{proposition}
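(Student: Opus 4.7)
The proposition combines \cite[Prop.~2.3]{buyalo:capacity} and \cite[Thm~8.2]{buyalodranishnikovschroeder:embedding}; the plan is to outline the main steps. The key input is the definition of capacity dimension: $Z$ having capacity dimension at most $n$ means there exists $\eps_0>0$ such that, for every sufficiently small $r>0$, $Z$ admits an open cover of mesh $<r$ and multiplicity $\le n+1$ whose $\eps_0 r$-thickening still has multiplicity $\le n+1$. Applying this at scale $r^k$ for each $k\ge 1$ yields a cover $\cal W_k$ of $Z$ with these properties; set $\cal W_0=\{Z\}$ and, shrinking $\eps$ if necessary, assume $\eps\le\eps_0$.

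The second step is to convert each $\cal W_k$ into an $(n+1)$-coloured family. Form the intersection graph on $\cal W_k$ whose edges record pairs of elements whose $\eps_0 r^k$-thickenings meet; the multiplicity bound gives clique number at most $n+1$, and a metric colouring argument produces a proper $(n+1)$-colouring whose colour classes are disjoint subfamilies with pairwise disjoint $\eps_0 r^k$-thickenings. Take $\cal U_k$ to be $\cal W_k$ with this colouring, and put $\cal U^c_0=\{Z\}$ for every colour $c\in C$.

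Property \ref{condition:level_0} is immediate from the mesh bound. For \ref{condition:parent}, the ball $B(v)=B_Z(v,2r^{k+1})$ has diameter at most $4r^{k+1}$, which is at most $\eps_0 r^k$ once $r<\eps/4$; since this is the Lebesgue number of $\cal U_k$, the ball lies inside a single element. The case $k=k'$ of \ref{condition:disjointness} is exactly the pairwise disjointness of the $\eps_0 r^k$-thickenings within each colour class, since the two elements are by hypothesis distinct.

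The main obstacle is \ref{condition:disjointness} when $k<k'$. I need that $B_Z(U',\eps r^{k'})$, for $U'\in\cal U^c_{k'}$, either sits inside a given $U\in\cal U^c_k$ or is disjoint from it, and this has to hold for every same-coloured $U$, not merely for a canonical parent at level $k$. To handle this, the covers must be built iteratively rather than chosen independently at each scale: at each step one arranges, along every colour, that a set at the finer level $k+1$ together with its $\eps r^{k+1}$-neighbourhood lies inside some (hence exactly one) set at the coarser level $k$ in the same colour. The compatibility is enforced by propagating the Lebesgue-number argument across scales --- the mesh at level $k+1$ is much smaller than the Lebesgue number at level $k$ --- and choosing the coloring at level $k+1$ so that, whenever possible, a finer-level set inherits the colour of its coarser-level parent. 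Iterating this construction through all levels then gives the nested-or-disjoint dichotomy claimed in \ref{condition:disjointness}.
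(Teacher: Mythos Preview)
The paper does not prove this proposition; it simply quotes it from \cite{buyalo:capacity} and \cite{buyalodranishnikovschroeder:embedding} and uses it as a black box. So there is no ``paper's proof'' to compare against, and your outline should be judged on its own merits.

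Your sketch captures the right ingredients---coloured covers at every scale coming from the definition of capacity dimension, with mesh and Lebesgue-number control---but two steps are not sound as written.

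First, the passage from multiplicity $\le n+1$ to an $(n+1)$--colouring is not a graph-colouring argument on the intersection graph. Bounded multiplicity only bounds the clique number of that graph, not its chromatic number: already in the plane one can have a cover of multiplicity~$2$ whose intersection graph is an odd cycle, hence not $2$--colourable. What is true (and is how Buyalo proceeds) is that a cover of multiplicity $\le n+1$ can be \emph{refined}, via a nerve/partition-of-unity argument, to an $(n+1)$--coloured cover with comparable mesh and Lebesgue number. Alternatively, one of the equivalent definitions of capacity dimension already produces coloured covers, and that is the cleaner starting point.

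Second, and more seriously, your handling of \ref{condition:disjointness} for $k<k'$ does not work. You propose that a level-$(k{+}1)$ set should ``inherit the colour of its coarser-level parent''. But for each colour $c$ the family $\cal U^c_k$ is disjoint and does \emph{not} cover $Z$, so a level-$(k{+}1)$ set need not lie in any level-$k$ set of a prescribed colour; and if you force the colour this way you destroy the disjointness of the colour classes at level $k{+}1$. The actual condition \ref{condition:disjointness} is weaker than nesting: it is nested-\emph{or}-disjoint. In the Buyalo construction this is achieved not by juggling colours but by building each $\cal U^c_k$ so that its members are uniformly ``fat'' (each contains a definite $\eps r^k$--neighbourhood of a core set). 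Then any set of diameter $\lesssim r^{k'}\ll \eps r^k$ that meets $U\in\cal U^c_k$ is automatically swallowed by $U$; otherwise it misses $U$ entirely. Your iterative colour-matching idea should be replaced by this separation/fattening mechanism.
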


Our arguments will not make explicit use of \ref{condition:parent}, though it is used to prove Proposition~\ref{prop:bds_qie}. Define $\cal U^c=\bigsqcup_{k\ge0}(\cal U^c_k\times\{k\})$. Formally, an element of $\cal U^c$ is a pair $(U,k)$, where $U\in\cal U^c_k$, but we shall often abuse notation slightly by just writing $U\in\cal U^c$. We call $k$ the \emph{level} of $U$, and denote it by $\ell(U)$, just as with elements of $V$. Let $\cal U=\bigsqcup_{c\in C}\cal U^c$. See Figure~\ref{fig:covers}.

\begin{figure}[ht]
\includegraphics[width=14cm]{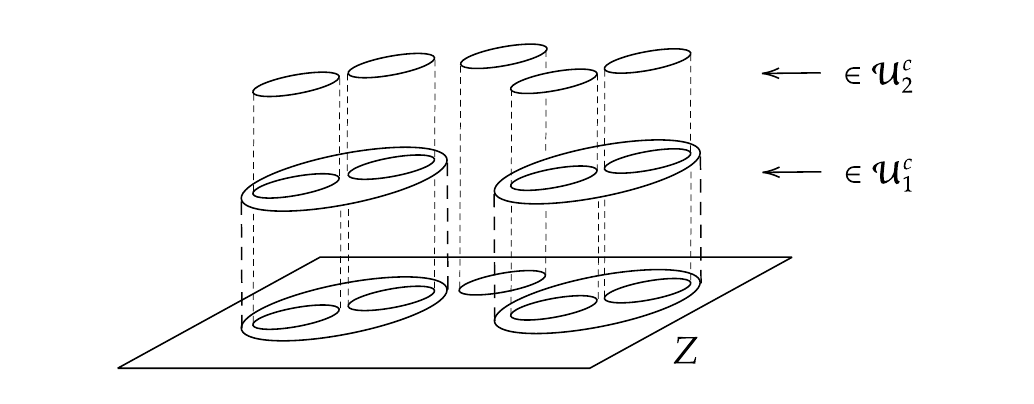}\centering
\caption{Schematic picture of one of the $\cal U^c$.} \label{fig:covers}
\end{figure}

With Proposition~\ref{prop:coloured_covering} in hand, let us now fix a sufficiently small constant $r<\frac17$, a hyperbolic cone on $Z$ with parameter $r$, and a sequence $(\cal U_k)_{k\ge0}$ of coloured covers of $Z$ as above. To improve clarity, let us write $\cal U^c_0=\{o_c\}$. Of course, \ref{condition:level_0} states that, as subspaces of $Z$, we have $o_c=Z$ for all $c$. 

Now, for each colour $c$ we build a rooted tree $T_c$. The vertex set of $T_c$ is $\cal U^c$, and the root is $o_c$. We join vertices $(U,k)$ and $(U',k')$ with $k<k'$ by an edge if $U'\subset U$ as subsets of $Z$ and there is no $(U'',k'')\in\cal U^c$ with $U'\subset U''$ and $k<k''<k'$. 

\begin{figure}[ht]
\includegraphics[width=14cm]{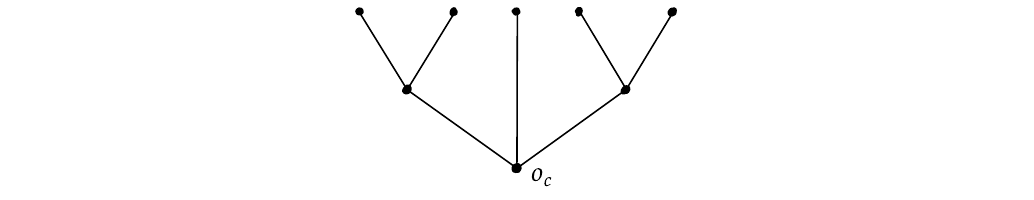}\centering
\caption{The part of $T_c$ corresponding to Figure~\ref{fig:covers}.} \label{fig:tree}
\end{figure}

We are ready to define a map $f_c:\cone Z\to T_c$. We set $f_c(o)=o_c$. For $v\in V_k$ with $k>0$, we define $f_c(v)=U\in\cal U^c$ to be the element of maximal level that has $B(v)\subset U$. This exists because $o_c=Z$, and it is well defined by disjointness of each $\cal U^c_j$. Each point $x\in\cone Z\smallsetminus V$ lies on some edge $vv'$, and we choose $f_c(x)\in\{f_c(v),f_c(v')\}$ arbitrarily.

\begin{proposition}[{\cite[Lem.~9.9, Thm~9.2]{buyalodranishnikovschroeder:embedding}}] \label{prop:bds_qie}
Suppose that $Z$ is a complete, bounded metric space with capacity dimension $n$. The maps $f_c|_V:V\to T_c$ are 2--Lipschitz, and $(f_c)_{c\in C}:\cone Z\to\prod_{c\in C}T_c$ is a quasiisometric embedding of $\cone Z$ in a product of $n+1$ trees.
\end{proposition}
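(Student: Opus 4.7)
The plan is to split the proof into the two component claims: the $2$--Lipschitz bound on each $f_c|_V$, and the quasiisometric embedding property of $\Phi=(f_c)_{c\in C}$.

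First I would verify the $2$--Lipschitz claim on each of the two edge-types of $\cone Z$. For a same-level edge between $v_1,v_2\in V_k$ with intersecting closed balls, set $U_i=f_c(v_i)$ and assume $\ell(U_1)\le\ell(U_2)$. Condition \ref{condition:level_0} forces $\ell(U_i)<k$ since $\diam B(v_i)=4r^k$, and the triangle inequality gives $d_Z(v_1,v_2)\le 4r^k<\eps r^{\ell(U_2)}$ because $r<\eps/4$. Hence $v_1\in U_1\cap B_Z(U_2,\eps r^{\ell(U_2)})$, and \ref{condition:disjointness} forces $B_Z(U_2,\eps r^{\ell(U_2)})\subset U_1$, whence $U_2\subset U_1$. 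For a parent--child edge with $v_2\in V_{k+1}$ and $B(v_2)\subset B(v_1)$, maximality of $U_2$ gives $\ell(U_2)\ge\ell(U_1)$, and $v_2\in U_1\cap U_2$ again triggers \ref{condition:disjointness}, giving $U_2\subset U_1$. In either case, $d_{T_c}(U_1,U_2)$ counts one plus the intermediates $W\in\cal U^c$ with $U_2\subsetneq W\subsetneq U_1$; any such $W$ satisfies $\ell(U_1)<\ell(W)\le\ell(U_2)$ (lower bound by disjointness of $\cal U^c_{\ell(U_1)}$, upper bound by maximality of $U_2$), with at most one $W$ per level, and a further application of \ref{condition:disjointness} restricts this to at most one intermediate, so $d_{T_c}(U_1,U_2)\le 2$.

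Next I would establish the quasiisometric embedding, equipping $\prod_{c\in C}T_c$ with the $\ell^1$-metric. The Lipschitz upper bound follows by summing over the $n+1$ colours. For the lower bound I would use hyperbolicity of $\cone Z$ (Prop.~\ref{prop:cone_is_hyperbolic}) and compare Gromov products: in each tree $T_c$, $(f_c(v)|f_c(v'))_{o_c}$ equals the depth of the LCA, and in $\cone Z$ the product $(v|v')_o$ is comparable to the level $m$ of a highest common cone-ancestor $v\wedge v'$. Applying \ref{condition:parent} at level $m-1$ to $v\wedge v'$ yields some $U\in\cal U_{m-1}$, of some colour $c$, with $B(v\wedge v')\subset U$; since cone edges respect $B$-nesting, $B(v),B(v')\subset U$ as well, so the tree-LCA of $f_c(v),f_c(v')$ has level $\le m-1$. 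Combined with a lower bound $\ell(f_c(v))\ge\ell(v)-O(1)$ obtained by applying \ref{condition:parent} at level $\ell(v)-1$, this gives $d_{T_c}(f_c(v),f_c(v'))\gtrsim\ell(v)+\ell(v')-2m\asymp d_{\cone Z}(v,v')$ for some colour, and the $\ell^1$-sum over colours is at least this.

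The main obstacle is the $2$--Lipschitz bound: specifically, ruling out more than one intermediate $W$ between $U_1$ and $U_2$ in $T_c$ when the covers $\cal U^c$ are sparse in levels. The argument exploits the smallness of $r$ relative to $\eps$ to propagate the nesting dichotomy of \ref{condition:disjointness} through a level-by-level analysis, showing that any putative second intermediate would force a nesting relation contradicting the maximality of either $U_1$ (as the highest-level element of $\cal U^c$ containing $B(v_1)$) or of $U_2$.
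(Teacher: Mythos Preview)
The paper does not give its own proof of this proposition; it is imported directly from \cite[Lem.~9.9, Thm~9.2]{buyalodranishnikovschroeder:embedding} and used as a black box, so there is no in-paper argument to compare against.

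Your reconstruction is nonetheless consistent with the strategy of the cited source: the $2$--Lipschitz bound is obtained from \ref{condition:level_0} and \ref{condition:disjointness} in the way you describe, and the lower bound for the quasiisometric embedding uses \ref{condition:parent} together with a Gromov-product comparison. Regarding the ``main obstacle'' you flag, the resolution is essentially what you sketch. If $U_2\subsetneq W\subsetneq W'\subsetneq U_1$ were two intermediates on $[U_1,U_2]$ in $T_c$, then \ref{condition:disjointness} applied to the pair $(W',W)$ yields $B_Z(W,\eps r^{\ell(W)})\subset W'$. Since $\ell(W)\le\ell(U_2)-1\le k-1$ (parent--child case) or $\ell(W)\le k-2$ (same-level case), and since $B(v_1)\subset B_Z(v_2,6r^k)\subset B_Z(W,6r^k)$, the inequality $r<\eps/4$ gives $B(v_1)\subset B_Z(W,\eps r^{\ell(W)})\subset W'$. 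But then $W'\supset B(v_1)$ with $\ell(W')>\ell(U_1)$, contradicting the maximality defining $U_1=f_c(v_1)$. So there is at most one intermediate, and $d_{T_c}(U_1,U_2)\le 2$ as required.
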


\subsection{The embedding is quasimedian}

We start with a couple of simple preliminary lemmas. With each $U\in\cal U_k$ we associate a subset $\hat U\subset V_k$ by setting $\hat U=\{v\in V_k:B(v)\cap U\neq\varnothing\}$. For two vertices $s$ and $t$ of a tree $T$, we write $[s,t]$ for the unique geodesic between them.

\begin{lemma} \label{lem:hat_map_close}
Suppose that $U\in\cal U^c_k$ and that $v\in \hat U$. We have $f_c(v)\in[o_c,U]\cap B_{T_c}(U,2)$.
\end{lemma}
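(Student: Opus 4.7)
The plan is to verify the two parts of the conclusion separately: first that $f_c(v)$ is an ancestor of $U$ in $T_c$ (equivalently, $f_c(v)\in[o_c,U]$), and then that $d_{T_c}(f_c(v),U)\le2$. Throughout I will write $W=f_c(v)\in\cal U^c_j$, so that $W$ is of maximal level with $B(v)\subset W$, and I will fix a point $u\in B(v)\cap U$, which exists because $v\in\hat U$. The case $k=0$ is trivial---then $v=o$ and $U=o_c=W$---so I assume $k\ge1$.

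For the first part, I will first observe that any element of $\cal U^c_k$ has diameter less than $r^k$, while $B(v)=B_Z(v,2r^k)$ has diameter $4r^k$, so $B(v)$ cannot be contained in $U$; the same consideration at any level $k'\ge k$ shows $j<k$. Since $u\in U\cap W$ witnesses that $U$ and $W$ are not disjoint, condition \ref{condition:disjointness} applied to the distinct pair $W\in\cal U^c_j$, $U\in\cal U^c_k$ (with $j\le k$) forces $B_Z(U,\eps r^k)\subset W$, and in particular $U\subsetneq W$. Thus $W$ is a proper ancestor of $U$ in $T_c$, so $W\in[o_c,U]$.

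For the distance bound, I plan to argue by contradiction: suppose $d_{T_c}(W,U)\ge3$. Then on the parent chain from $U$ up to $W$ there are two intermediate vertices $P_1,P_2\in\cal U^c$ with $U\subsetneq P_1\subsetneq P_2\subsetneq W$ and strictly decreasing levels $k>\ell(P_1)>\ell(P_2)>j$. The maximality of $j=\ell(W)$ gives $B(v)\not\subset P_2$. On the other hand, \ref{condition:disjointness} applied to the distinct nested pair $P_1\subset P_2$ yields $B_Z(P_1,\eps r^{\ell(P_1)})\subset P_2$, and since $u\in P_1\cap B(v)$ every point of $B(v)$ lies within $\diam B(v)=4r^k$ of $P_1$; the inequality $4r^k<\eps r^{k-1}\le\eps r^{\ell(P_1)}$, valid because $r<\eps/4$ and $\ell(P_1)\le k-1$, then forces $B(v)\subset P_2$, contradicting the previous assertion.

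The main obstacle I expect is the bookkeeping around which set contains which, and in particular recognising that the $\eps$--buffer in \ref{condition:disjointness} must be applied at the level of the closer ancestor $P_1$ (not at level $k$) in order to give enough room to absorb a ball $B(v)$ of radius $2r^k$. This is precisely the reason one obtains the constant $2$ and not $1$: one intermediate ancestor between $W$ and $U$ can genuinely occur, but a second one is ruled out already at the very next level down.
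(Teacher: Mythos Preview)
Your proof is correct and matches the paper's approach: both use a diameter comparison to get $\ell(f_c(v))<k$, apply \ref{condition:disjointness} to place $f_c(v)$ on $[o_c,U]$, and then exploit the chain $B(v)\subset B_Z(P_1,4r^k)\subset B_Z(P_1,\eps r^{\ell(P_1)})\subset P_2$ for the distance bound. The only difference is cosmetic: the paper argues directly that $B(v)$ is contained in the grandparent $U''$ of $U$ (your $P_1,P_2$ specialised to the parent and grandparent), whereas you phrase the same computation as a contradiction.
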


\begin{proof}
As noted in \cite[\S9.3]{buyalodranishnikovschroeder:embedding}, the fact that $B(v)$ has radius $2r^k$, which is greater than $\sup\{\diam_Z(W):W\in\cal U_j\}$ for all $j\ge k$, implies that $\ell(f_c(v))<k$. Since $B(v)$ intersects $U$, property~\ref{condition:disjointness} of $\cal U$ tells us that $f_c(v)\in[o_c,U]$.

If $\dist_{T_c}(o_c,U)\le2$ then we are done. Otherwise, let $U'$, $U''$ be the vertices of $[o_c,U]$ with $\dist_{T_c}(U,U')=1$ and $\dist_{T_c}(U,U'')=2$. We have a chain of subsets of $Z$ as follows:
\begin{align*}
B(v) &\subset B_Z(U,4r^k) \subset B_Z(U',4r^k) \subset \\
    &\subset B_Z(U',\eps r^{k-1}) \subset B_Z(U',\eps r^{\ell(U')}) \subset U''.
\end{align*}
By definition of $f_c$, we have that $f_c(v)\in\{U',U''\}$.
\end{proof}

\begin{lemma} \label{lem:u_hat_small}
For every $U\in\cal U$, the set $\hat U$ is nonempty and has $\diam_{\cone Z}(\hat U)\le2$.
\end{lemma}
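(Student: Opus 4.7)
The plan is to establish both parts by direct calculation from the definitions, using the net property of $V_k$, the diameter bound $\diam_Z(U)<r^k$, and the smallness of the parameter $r$ fixed before Proposition~\ref{prop:bds_qie}.

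For nonemptiness, I would use that a maximal $r^k$-separated set is automatically an $r^k$-net: picking any $z\in U$ (which exists since elements of $\cal U_k$ are nonempty open sets), there is $v\in V_k$ with $\dist_Z(z,v)<r^k<2r^k$, so $z\in B(v)\cap U$ and $v\in\hat U$.

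For the diameter bound, I would split on $k=\ell(U)$. When $k=0$, the set $V_0$ is a singleton (since $\diam_Z Z<1=r^0$), so $\hat U\subseteq V_0$ automatically has diameter $0$. When $k\ge1$, given $v_1,v_2\in\hat U$, the plan is to produce a common neighbour $v_3\in V_{k-1}$ in $\cone Z$: pick $z\in U\cap B(v_1)$ and apply the net property of $V_{k-1}$ to obtain $v_3\in V_{k-1}$ with $\dist_Z(z,v_3)<r^{k-1}$. It then suffices to verify $B(v_i)\subset B(v_3)$ for $i=1,2$, which promotes $v_i$ and $v_3$ to adjacent vertices in $\cone Z$. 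Picking any $z_i\in U\cap B(v_i)$ (with $z_1=z$) and using the triangle inequality, any $w\in B(v_i)$ satisfies
\[
\dist_Z(w,v_3)<2r^k+2r^k+r^k+r^{k-1}=r^{k-1}(1+5r),
\]
which is strictly less than $2r^{k-1}$ provided $r<\tfrac15$. Since $r<\tfrac17$ by our standing assumption, the inclusion holds, so $v_1$ and $v_2$ are both adjacent to $v_3$ in $\cone Z$, giving $\dist_{\cone Z}(v_1,v_2)\le2$.

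The only point requiring care is the numerology above together with the fact that all the relevant inequalities remain strict (necessary because the balls $B(v)$ are open); this is automatic since $z_i\in B(v_i)$ already contributes a strict inequality. The estimate has ample slack, so no genuine obstacle arises.
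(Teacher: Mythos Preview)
Your proof is correct and follows essentially the same approach as the paper: nonemptiness via the net property of $V_k$, and the diameter bound by producing a common neighbour in $V_{k-1}$ using a short triangle-inequality chain. The only differences are cosmetic---you handle the case $k=0$ explicitly and you apply the net property of $V_{k-1}$ at a point of $U$ rather than at a vertex of $\hat U$, which gives the slightly sharper constraint $r<\tfrac15$ in place of the paper's $r<\tfrac17$.
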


\begin{proof}
Nonemptiness is automatic because $V_k$ is an $r^k$--net. Let $v\in\hat U$. There exists $v^-\in V_{k-1}$ such that $\dist_Z(v^-,v)\le r^{k-1}$. Now, if $z\in B(v')$ for some $v'\in\hat U$, then
\begin{align*}
\dist_Z(z,v^-) &\le \dist_Z(z,U)+\diam_Z(U)+\dist_Z(U,v)+\dist_Z(v,v^-) \\
    &\le 4r^k + r^k + 2r^k + r^{k-1},
\end{align*}
which is less than $2r^{k-1}$ because $r<\frac17$. Thus $B(v')\subset B(v^-)$ for all $v'\in\hat U$, so every $v'\in\hat U$ is joined to $v^-$ by an edge of $\cone Z$. Thus $\diam_{\cone Z}(\hat U)\le2$.
\end{proof}

We can now establish that the map of Proposition~\ref{prop:bds_qie} is quasimedian.

\begin{proposition} \label{prop:bds_quasimedian}
Suppose that $Z$ is a complete, bounded metric space with capacity dimension $n$. The map $(f_c)_{c\in C}:\cone Z\to\prod_{c\in C}T_c$ described in Section~\ref{subsection:bds_embedding} is quasimedian.
\end{proposition}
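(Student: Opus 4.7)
Since medians in the product $\prod_{c \in C} T_c$ are computed coordinatewise, it suffices to prove that each $f_c \colon \cone Z \to T_c$ is quasimedian. Both spaces are hyperbolic---$\cone Z$ by Proposition~\ref{prop:cone_is_hyperbolic} and $T_c$ as a tree---and $f_c(o) = o_c$, so I plan to apply Lemma~\ref{lem:partial_quasimorphism} with basepoint $o$. This requires checking that $f_c$ is coarsely Lipschitz, which is immediate from Proposition~\ref{prop:bds_qie}, and that $f_c$ is a coarse morphism for the two binary operations $\mu_{\cone Z}(\cdot,\cdot,o)$ and $\mu_{T_c}(\cdot,\cdot,o_c)$. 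Everything hinges on the latter.

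For $v_1,v_2 \in V$, let $b = \mu_{T_c}(f_c(v_1),f_c(v_2),o_c)$, the deepest common ancestor of $f_c(v_1)$ and $f_c(v_2)$ in the rooted tree $T_c$. Using the disjoint-or-nested property \ref{condition:disjointness} of $\cal U^c$, I first observe that ancestors in $T_c$ correspond to the superset relation among cover elements, so $b$ can be identified with the maximal-level $U \in \cal U^c$ such that $B(v_1), B(v_2) \subset U$. In particular $v_1,v_2 \in b$, and then \ref{condition:level_0} gives $\dist_Z(v_1,v_2) < r^{\ell(b)}$. The crux is to produce an explicit representative of $\mu_{\cone Z}(v_1,v_2,o)$ whose $f_c$-image is close to $b$. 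For this, let $w \in V_{\ell(b)-1}$ be a closest point to $v_1$. Maximality of $V_{\ell(b)-1}$ as an $r^{\ell(b)-1}$-separated net, combined with the small-parameter assumption $r \le 1/7$, forces $B(w) \supset B(v_1) \cup B(v_2)$; hence $w$ is a common $\cone Z$-ancestor of $v_1, v_2$ at level $\ell(b)-1$. A short Gromov-product calculation combined with Proposition~\ref{prop:hyperbolic_median_unique} then shows that $w$ may be taken as a representative of $\mu_{\cone Z}(v_1,v_2,o)$, up to an additive error depending only on the hyperbolicity constant.

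It then remains to bound $d_{T_c}(f_c(w),b)$, and this is the main obstacle: showing $f_c(w)$ lands deep enough in the tree. To do this, I pick $v \in V_{\ell(b)}$ closest to $v_1$. Since $v_1 \in B(v) \cap b$, one has $v \in \hat b$, so Lemma~\ref{lem:hat_map_close} gives $d_{T_c}(f_c(v),b) \le 2$. A routine calculation using $r \le 1/7$ shows $B(v) \subset B(w)$, producing a level-lowering edge between $w$ and $v$ in $\cone Z$; by the Lipschitz bound of Proposition~\ref{prop:bds_qie}, $d_{T_c}(f_c(w),f_c(v)) \le 2$. The triangle inequality then gives $d_{T_c}(f_c(w),b) \le 4$, which establishes the coarse morphism property and hence the proposition via Lemma~\ref{lem:partial_quasimorphism}. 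The difficulty throughout is coordinating the cone's vertex levels with the tree's combinatorial levels, and Lemma~\ref{lem:hat_map_close} together with the disjoint-or-nested property \ref{condition:disjointness} is what allows the two structures to be meaningfully compared.
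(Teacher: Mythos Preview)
Your reduction via Lemma~\ref{lem:partial_quasimorphism} and your identification of $b=\mu_{T_c}(f_c(v_1),f_c(v_2),o_c)$ as the maximal-level element of $\cal U^c$ containing $B(v_1)\cup B(v_2)$ are both correct and match the paper's set-up. The gap is in the claim that $w$ represents $\mu_{\cone Z}(v_1,v_2,o)$.

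The fact that $w$ lies at level $\ell(b)-1$ with $B(w)\supset B(v_1)\cup B(v_2)$ shows that geodesics from $o$ to $v_1$ and from $o$ to $v_2$ can both be routed through $w$. This yields the lower bound $(v_1|v_2)_o\gtrsim\ell(b)$, so the coarse median $m$ sits at level at least $\ell(b)-O(1)$. But there is no matching upper bound: your only information is $\dist_Z(v_1,v_2)<r^{\ell(b)}$, which bounds $\dist_Z$ from above and hence the Gromov product from \emph{below} again. The median in $\cone Z$ sits at level roughly $\min\big(\ell(v_1),\ell(v_2),\log_{1/r}(1/\dist_Z(v_1,v_2))\big)$, and this can be arbitrarily larger than $\ell(b)$ whenever the single colour $\cal U^c$ happens to be sparse near $v_1,v_2$; a fixed colour need not cover $Z$ at every level. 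In that situation $w$ is far from $m$ in $\cone Z$, and the Lipschitz bound on $f_c$ does not transfer your estimate $\dist_{T_c}(f_c(w),b)\le 4$ to $f_c(m)$. The ``short Gromov-product calculation'' therefore cannot do what you ask of it.

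The paper avoids pinning down the level of $m$. It fixes geodesics $\gamma_i$ from $o$ to $v_i$ with one vertex per level and observes that $f_c\gamma_i$ is \emph{monotone} in $[o_c,f_c(v_i)]$. Writing $k_{12}=\ell(b)$, the vertices $x_{i_{k_{12}}}\in\gamma_i$ at level $k_{12}$ both lie in $\hat b$, so Lemma~\ref{lem:u_hat_small} gives $\dist_{\cone Z}(x_{1_{k_{12}}},x_{2_{k_{12}}})\le 2$; this forces the maximal-level points $y_i\in\gamma_i$ lying in the $\delta$-centre of the triangle to satisfy $\ell(y_i)\ge k_{12}$. Monotonicity together with Lemma~\ref{lem:hat_map_close} then places each $f_c(y_i)$ on $[o_c,f_c(v_i)]$ past $b$ up to error $2$, whence $\mu_{T_c}(f_c(y_1),f_c(y_2),o_c)$ is within $2$ of $b$, and the Lipschitz bound plus $\dist_{\cone Z}(y_i,m)\le D$ finishes. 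The ingredient your argument is missing is precisely this monotonicity of $f_c$ along level-increasing geodesics, which converts the one-sided level inequality you do have into a two-sided bound in $T_c$.
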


\begin{proof} 
We must show that each factor map $f_c$ is quasimedian, and it is enough to work with the restriction $f_c|_V$ of $f_c$ to the coarsely dense subset $V\subset\cone Z$. According to Lemma~\ref{lem:partial_quasimorphism}, it suffices to show that $f_c$ is a coarse morphism for the binary operations $\mu_{\cone Z}(\cdot,\cdot,o)$ and $\mu_{T_c}(\cdot,\cdot,o_c)$. Let $x_1$ and $x_2$ be vertices of $\cone Z$, and write $k_i=\ell(x_i)$.

For any $v\in V_k$ and any $j<k$, the fact that $V_j$ is an $r^j$--net implies that there is some $v_j\in V_j$ with $\dist_Z(v,v_j)\le r^j$. For any $z\in B(v)$, we have
\[ 
    \dist_Z(z,v_j) \le \dist_Z(z,v)+\dist_Z(v,v_j) \le 2r^k+r^j < 2r^j.
\]
Hence $B(v)\subset B(v_j)$. Applying this to $x_1$ and $x_2$ shows that we can fix geodesics $\gamma_i=(o=x_{i_0},x_{i_1},\dots,x_{i_{k_i}}=x_i)$, from $o$ to $x_i$ inside $\cone Z$, such that $\ell(x_{i_j})=j$. We have $B(x_{i_j})\subset B(x_{i_{j-1}})$ for all $i,j$.

Recall that for a vertex $x\in V$, the image $f_c(x)$ is defined to be the element $U\in\cal U^c$ of maximal level such that $B(x)\subset U$. Let us write $U_i=f_c(x_i)$ and $U_{i_j}=f_c(x_{i_j})$. If $j_1\le j_2$, then $B(x_{i_{j_2}})\subset B(x_{i_{j_1}})$, so $U_{i_{j_2}}\subset U_{i_{j_1}}$, by property~\ref{condition:disjointness} of $\cal U$. Hence $f_c\gamma_i$ is a monotone map to the unique geodesic $[o_c,U_i]$ in $T_c$. 

The median $\mu_{T_c}(U_1,U_2,o_c)$ is the maximal-level element $U_{12}\in[o_c,U_1]\cap[o_c,U_2]$. In other words, it is the element $U_{12}\in\cal U^c$ of maximal level such that $B(x_1)\cup B(x_2)\subset U_{12}$. Let us write $k_{12}=\ell(U_{12})$.

Let $\delta$ be a hyperbolicity constant for $\cone Z$, which exists by Proposition~\ref{prop:cone_is_hyperbolic}. Fix a geodesic $\gamma_{12}$ in $\cone Z$ from $x_1$ to $x_2$, and define 
\[
    M=\big\{x\in\cone Z:\max\{\dist_{\cone Z}(x,\gamma_1), \dist_{\cone Z}(x,\gamma_2), \dist_{\cone Z}(x,\gamma_{12})\}\le\delta+2\big\} \subset \cone Z.
\]
Because $\cone Z$ is $\delta$--hyperbolic, it is $\delta+2$--hyperbolic. Thus $\mu_{\cone Z}(x_1,x_2,o)\in M$, and the diameter of $M$ is bounded by some constant $D=D(\delta)$. Note that $M\cap\gamma_i\ne\varnothing$.

Let $y_i$ be the unique element of $M\cap\gamma_i$ of maximal level. Because $U_{12}$ contains $B(x_i)$, it intersects $B(x_{i_{k_{12}}})$. Thus $x_{i_{k_{12}}}\in\hat U_{12}$, so Lemma~\ref{lem:u_hat_small} tells us that $\dist_{\cone Z}(x_{1_{k_{12}}},x_{2_{k_{12}}})\le2$. In particular, $x_{i_{k_{12}}}$ is $2$--close to both $\gamma_1$ and $\gamma_2$. It follows from maximality of $y_i$ that $\ell(y_i)\ge\ell(x_{i_{k_{12}}})=k_{12}$. Since $f_c\gamma_i$ is monotone in $[o_c,U_i]$, it follows that $f_c(y_i)\in[f_c(x_{i_{k_{12}}}),U_i]$. On the other hand, Lemma~\ref{lem:hat_map_close} states that $f_c(x_{i_{k_{12}}})\in[o_c,U_{12}]\cap B_{T_c}(U_{12},2)$. Taking these together, we see that 
\[
\dist_{T_c}\big(\mu_{T_c}(f_c(y_1),f_c(y_2),o_c),\ssp U_{12}\big) \le 2.
\]

Because $y_1$, $y_2$, and $m=\mu_{\cone Z}(x_1,x_2,o)$ all lie in $M$, the fact that $f_c|_V$ is 2--Lipschitz (Proposition~\ref{prop:bds_qie}) shows that $\dist_{T_c}(f_c(y_i),f_c(m))\le2D$. To sum up, we have 
\begin{align*}
\dist_{T_c}\big(\mu_{T_c}(f_c(x_1),&f_c(x_2),o_c),\ssp f_c(\mu_{\cone Z}(x_1,x_2,o))\big) \\
    =\hsp&      \dist_{T_c}(U_{12},f_c(m)) \\
    \le\hsp&    \dist_{T_c}\big(\mu_{T_c}(f_c(y_1),f_c(y_2),o_c),\ssp f_c(m)\big) + 2 \\
    \le\hsp&    \dist_{T_c}\big(\mu_{T_c}(f_c(m),f_c(m),o_c), f_c(m)\big) +2+4D \\
    =\hsp&      2+4D,
\end{align*}
because $\mu_{T_c}$ is 1--Lipschitz in each coordinate and $\mu_{T_c}(a,a,b)=a$. 
\end{proof}

\begin{theorem} \label{thm:tfae_hyp_asdim_quasicubical}
If $X$ is a hyperbolic space, then $X$ is quasiisometric to a finite-dimensional CAT(0) cube complex if and only if $X$ has finite asymptotic dimension.
\end{theorem}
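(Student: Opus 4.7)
The plan is to prove each direction separately. For the forward direction (``only if''), I would invoke the well-known quasiisometry invariance of asymptotic dimension together with the theorem of N.~Wright stating that a finite-dimensional CAT(0) cube complex $Q$ satisfies $\asdim Q \leq \dim Q$. These combine immediately to give $\asdim X < \infty$ whenever $X$ is quasiisometric to a finite-dimensional CAT(0) cube complex.

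For the backward direction, suppose $X$ is a hyperbolic space with $\asdim X < \infty$. My strategy is to assemble the machinery already developed in this section into a quasimedian quasiisometric embedding of $X$ into a finite product of trees, and then to apply Proposition~\ref{proposition:hagenpetyt2.11}. First, Lemma~\ref{lem:visualisation} embeds $X$ by a median isometry into a visual hyperbolic space $Y$ with $\asdim Y = \max\{1, \asdim X\} < \infty$. Next, one invokes a theorem of Buyalo (and Buyalo--Lebedeva) which asserts that, for a visual hyperbolic space $Y$, the capacity dimension of the Gromov boundary satisfies $\text{capdim}(\partial_\infty Y) \leq \asdim Y - 1 < \infty$. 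Equipped with a visual metric, $\partial_\infty Y$ is a complete, bounded metric space of finite capacity dimension, putting us in the setting required by Proposition~\ref{prop:coloured_covering}.

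Proposition~\ref{prop:cone_is_hyperbolic} then yields a quasiisometric embedding $Y \hookrightarrow \cone(\partial_\infty Y)$, which is quasimedian because both source and target are hyperbolic (Lemma~\ref{lem:qieimpliesqm}). Propositions~\ref{prop:bds_qie} and~\ref{prop:bds_quasimedian} provide a quasimedian quasiisometric embedding of $\cone(\partial_\infty Y)$ into a finite product of trees $\prod_{c \in C} T_c$. Composing these with the median isometry $X \hookrightarrow Y$ yields a quasimedian quasiisometric embedding of $X$ into the finite-dimensional CAT(0) cube complex $\prod_c T_c$. Since $X$ is a hyperbolic (hence coarsely connected) coarse median space, with median unique up to bounded error by Proposition~\ref{prop:hyperbolic_median_unique}, Proposition~\ref{proposition:hagenpetyt2.11} then produces a quasiisometry from $X$ to a finite-dimensional CAT(0) cube complex, completing the proof.

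The main obstacle is the step that passes from $\asdim Y < \infty$ to $\text{capdim}(\partial_\infty Y) < \infty$: without this inequality of Buyalo, the covering construction of Proposition~\ref{prop:coloured_covering} is unavailable and the whole approach collapses. The other ingredients are either direct applications of results already set up in Sections~\ref{section:preliminaries} and~\ref{subsection:bds_embedding} or routine compositions of quasimedian quasiisometric embeddings, so the real work lies in locating and applying the correct asymptotic-dimension-to-capacity-dimension bound for visual hyperbolic spaces.
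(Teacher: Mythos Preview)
Your proposal is correct and follows essentially the same route as the paper's proof: visualise $X$ via Lemma~\ref{lem:visualisation}, pass to the hyperbolic cone on the boundary via Proposition~\ref{prop:cone_is_hyperbolic}, embed that cone quasimedianly in a finite product of trees via Propositions~\ref{prop:bds_qie} and~\ref{prop:bds_quasimedian}, and conclude with Proposition~\ref{proposition:hagenpetyt2.11}; the converse is exactly Wright's theorem plus quasiisometry invariance of asymptotic dimension. The only difference is the citation for the key external input you flagged: the paper invokes \cite[Prop.~3.6]{mackaysisto:embedding} for the bound on the capacity dimension of $\partial Y$, whereas you attribute it to Buyalo and Buyalo--Lebedeva, but this is the same fact.
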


\begin{proof}
Suppose that $X$ has finite asymptotic dimension and let $Y$ be the visual hyperbolic space obtained by applying Lemma~\ref{lem:visualisation} to $X$. Proposition~\ref{prop:cone_is_hyperbolic} shows that $Y$ admits a quasiisometric embedding in $\cone(\partial Y)$, and this map is automatically quasimedian by Lemma~\ref{lem:qieimpliesqm}.

As the boundary of a hyperbolic space, $\partial Y$ is complete and bounded \cite[Prop.~6.2]{bonkschramm:embeddings}. Moreover, $\asdim Y\le1+\asdim X$ is finite, so \cite[Prop.~3.6]{mackaysisto:embedding} shows that the capacity dimension of $\partial Y$ is finite. The conditions of Proposition~\ref{prop:bds_qie} are therefore met by $Z=\partial Y$, so $\cone(\partial Y)$ admits a quasimedian quasiisometric embedding in a finite product of trees by Propositions~\ref{prop:bds_qie} and~\ref{prop:bds_quasimedian}. Composing with the embeddings of $X$ in $Y$ and of $Y$ in $\cone(\partial Y)$ yields a quasimedian quasiisometric embedding of $X$ in a finite product of trees, which is a CAT(0) cube complex. We conclude from Proposition~\ref{proposition:hagenpetyt2.11} that $X$ is quasiisometric to a finite-dimensional CAT(0) cube complex.

For the converse, suppose that $X$ is quasiisometric to a finite-dimensional CAT(0) cube complex. Since asymptotic dimension is preserved by quasiisometries \cite[Prop.~22]{belldranishnikov:asymptotic}, the result follows from Wright's theorem that the asymptotic dimension of a CAT(0) cube complex is bounded above by its cubical dimension \cite{wright:finite}.
\end{proof}

\begin{remark}
The proof of Theorem~\ref{thm:tfae_hyp_asdim_quasicubical} implies that, for hyperbolic spaces, the existence of a quasiisometric embedding in a finite product of trees is enough to guarantee the existence of a quasimedian quasiisometric embedding. It would be interesting to have a constructive proof of this fact.
\end{remark}

\subsection{Application to HHGs} \label{subsection:hhs}

We finish this section by applying Theorem~\ref{thm:tfae_hyp_asdim_quasicubical} to colourable hierarchically hyperbolic groups, of which mapping class groups are examples. The resulting Theorem~\ref{thm:colourable_HHG_QC_body} also applies to Artin groups of extra large type \cite[Thm~6.15, Rem.~6.16]{hagenmartinsisto:extra} and extensions of Veech groups \cite[Thm~1.4]{dowdalldurhamleiningersisto:extensions:2}, amongst others.

Hierarchically hyperbolic groups (HHGs) are groups that display nonpositive curvature similar to the hierarchy structure of mapping class groups that was described in Section~\ref{subsection:hierarchy}. The full definition is somewhat technical (see \cite[Def.~1.1]{behrstockhagensisto:hierarchically:2}, \cite[p.4]{petytspriano:unbounded}), so let us just summarise the facts that are relevant for us and not give a complete definition.

An HHG is a pair $(G,\s)$, where $G$ is a group with a fixed finite generating set and $\s$ is a set with a cofinite $G$--action. Moreover, the following hold (\emph{cf.} the list in Section~\ref{subsection:hierarchy}). 
\begin{itemize}
\item   Each $W\in\s$ has an associated uniformly hyperbolic space $\C W$.
\item   There are three ways for elements of $\s$ to be related. One of these relations is called \emph{transversality}, and denoted $W\trans V$. In this situation, there are uniformly bounded subsets $\rho^W_V\subset\C V$ and $\rho^V_W\subset\C W$.
\item   $G$ is a coarse median space.
\end{itemize}

We say that an HHG $(G,\s)$ is \emph{colourable} if there is a finite partition $\s=\bigsqcup_{i=1}^\chi\s_i$ such that $G$ acts by permutations on $\{\s_i:1\le i\le\chi\}$ and any two elements of any one $\s_i$ are transverse. Note that $G$ need not be virtually torsionfree \cite{hughes:lattices}. The important part about the colouring for us is that it gives access to the following.

\begin{proposition}[{\cite[Lem.~3.4, Thm~3.1]{hagenpetyt:projection}}] \label{prop:coloured_HHGs_embed}
If $(G,\s)$ is a colourable HHG, then the projection axioms are satisfied by $(\s_i,\{\rho^W_V:W,V\in\s_i\})$ for each $i$. We can therefore build quasitrees of metric spaces $\C_K\s_i$. For $K$ sufficiently large, there is a quasimedian quasiisometric embedding $G\to\prod_{i=1}^\chi\C_K\s_i$.
\end{proposition}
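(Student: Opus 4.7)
My plan is to prove the proposition in three stages, treating the projection axioms, the quasi-isometric embedding, and the quasi-median property in turn.

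For the projection axioms, I would exploit the assumption that any two distinct $W,V\in\s_i$ are transverse, which by the HHG axioms yields a uniformly bounded $\rho^W_V\subset\C V$, giving \ref{projection0}. For \ref{projection1}, the Behrstock-style consistency axiom for HHGs already asserts that, for transverse $W,V,Z\in\s$, at most one of $\diam(\rho^W_Z\cup\rho^V_Z)$ and $\diam(\rho^Z_W\cup\rho^V_W)$ exceeds a uniform constant; this is exactly the BBF inequality when restricted to $\s_i$. For \ref{projection2}, I would invoke the HHG finite-support (``partial realisation / bounded geodesic image'') axiom: only finitely many domains can see a large amount of distance between a fixed pair of coordinate tuples, and realising $X,Y$ as points of $G$ reduces the statement to this. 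Equivariance is automatic from the action of $G$ on $\s$ and the compatibility of the $\rho$-sets with this action.

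For the embedding, I would pick for each colour $i$ a basepoint $W_i\in\s_i$ and $p_i\in\C W_i$, and define $\phi_i:G\to\C_K\s_i$ by $\phi_i(g)=g\cdot p_i$ (slightly perturbed, if necessary, to land on a vertex). Write $\Phi=(\phi_i)_{i=1}^\chi$. The upper bound on $d_{\C_K\s_i}(\phi_i(g),\phi_i(h))$ from the BBF distance formula (Theorem~\ref{thm:bbfdf}), summed over $i$, is majorised by the HHG distance formula applied to $(g,h)$; going the other direction, the HHG distance formula's sum over $\s=\bigsqcup_i\s_i$ is controlled term-by-term by the BBF lower bound, once one checks that the projections $\pi^\flat_W(\phi_i(g))$ coarsely coincide with $\pi_W(g)$ for $W\in\s_i$. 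Matching the two distance formulas colour by colour then yields the quasi-isometric embedding.

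The hard part is the quasi-median property, and this is where I would spend the most effort. The model is the argument in Section~\ref{section:mainresult}: given $g_1,g_2,g_3\in G$ with coarse median $m$, one uses the HHG analogue of Lemma~\ref{lem:medianonanhp} to produce hierarchy-path triangles $\beta_{jk}$ through $m$, and it then suffices to show that each $\phi_i\beta_{jk}$ is a uniform unparametrised quasigeodesic in the hyperbolic space $\C_K\s_i$. Once that is in hand, hyperbolicity of $\C_K\s_i$ and uniqueness of hyperbolic medians (Proposition~\ref{prop:hyperbolic_median_unique}) force $\phi_i(m)$ to be close to $\mu_{\C_K\s_i}(\phi_i(g_1),\phi_i(g_2),\phi_i(g_3))$. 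To establish the quasi-geodesicity of $\phi_i\beta_{jk}$, I would mimic the chain of Lemmas~\ref{lem:behrstockaxis}--\ref{lem:coarseintersections}: partition $\s_i$ into relevant and irrelevant domains for the endpoints, order the relevant ones using the HHG partial order, show that the image decomposes as an alternating concatenation of subpaths that travel along individual $\C W$ (near-quasigeodesically, because hierarchy paths project to unparametrised quasigeodesics in each $\C W$ by definition) and short ``jump'' segments, and then apply the Hsu--Wise local-to-global criterion (Lemma~\ref{lem:hagenwise}). The principal obstacle is to verify the coarse-intersection hypothesis of that lemma in the HHG setting, which will rest on the BBF distance formula for $\C_K\s_i$ together with the HHG consistency and bounded-geodesic-image axioms to control how projections of hierarchy paths interact across different domains.
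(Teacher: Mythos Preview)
The paper does not actually prove this proposition: it is quoted verbatim as \cite[Lem.~3.4, Thm~3.1]{hagenpetyt:projection}, and the text resumes immediately with ``With this in hand\dots''. So there is no in-paper proof to compare your proposal against.

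That said, your outline is a reasonable strategy for establishing the cited result, and it is essentially the programme of Section~\ref{section:mainresult} transplanted to the simpler setting of quasitrees of curve graphs rather than quasitrees of quasigeodesics. A few comments on that transplantation. First, the setting here is genuinely easier: the metric spaces in $\C_K\s_i$ are the $\C W$ themselves, so there is no second layer of closest-point projection to an axis $\gamma_i$ inside $\C U_i$. Thus Lemma~\ref{lem:behrstockaxis} collapses to the ordinary HHG Behrstock inequality, and much of the bookkeeping with the constants $\nu,\lambda',D'$ disappears; the total order on relevant domains is already the HHG partial order of Definition~\ref{definition:ordering}, with no new case to check. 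Second, your definition of $\phi_i$ via an arbitrary basepoint $p_i\in\C W_i$ will not give $\pi^\flat_W(\phi_i(g))$ coarsely equal to $\pi_W(g)$ unless you first arrange, as in~\eqref{eq:nearrho}, that $1\in P_{W_i}$; without this the comparison of the two distance formulas does not go through. Third, your verification of \ref{projection1} for three domains is not literally the HHG consistency axiom (which concerns a point of $G$, not a third domain); one does need partial realisation to manufacture such a point, as you hint but do not make explicit.

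In short: your proposal is sound in spirit and would yield a proof, but it imports more machinery from Section~\ref{section:mainresult} than this simpler situation requires, and it omits the product-region normalisation that makes the orbit map compatible with the HHG projections.
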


With this in hand, we can now use Theorem~\ref{thm:tfae_hyp_asdim_quasicubical} to prove Theorem~\ref{thm:colourable_HHGs_quasicubical}, of which Theorem~\ref{thm:mcgisacubecomplex} is a special case.

\begin{theorem} \label{thm:colourable_HHG_QC_body}
Let $(G,\s)$ be a colourable hierarchically hyperbolic group. There is a finite-dimensional CAT(0) cube complex $Q$ with a quasimedian quasiisometry $G\to Q$.
\end{theorem}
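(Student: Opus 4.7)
The plan is to bootstrap the machinery already assembled: Proposition~\ref{prop:coloured_HHGs_embed} provides a quasimedian quasiisometric embedding $\Phi\colon G\to\prod_{i=1}^\chi\C_K\s_i$ into a finite product of quasitrees of metric spaces, and each factor $\C_K\s_i$ is hyperbolic by \cite[Thm~4.17]{bestvinabrombergfujiwara:constructing}. The task is therefore to replace each hyperbolic factor by a finite-dimensional CAT(0) cube complex, in a way that is compatible with medians, and then invoke Proposition~\ref{proposition:hagenpetyt2.11}.

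To cubulate the factors, I would apply Theorem~\ref{thm:tfae_hyp_asdim_quasicubical} to each $\C_K\s_i$. This requires verifying that $\C_K\s_i$ has finite asymptotic dimension, which I would deduce from the fact that in a colourable HHG the hyperbolic spaces $\C W$ (for $W\in\s_i$) have uniformly bounded asymptotic dimension, combined with a stability statement for the quasitree-of-spaces construction. Once this is in hand, Theorem~\ref{thm:tfae_hyp_asdim_quasicubical} produces, for each $i$, a quasiisometry $\C_K\s_i\to Q_i$ to a finite-dimensional CAT(0) cube complex; this quasiisometry is automatically quasimedian by Lemma~\ref{lem:qieimpliesqm}, since both spaces are hyperbolic.

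Now I would take the product map $\prod_{i=1}^\chi\C_K\s_i\to\prod_{i=1}^\chi Q_i$. Since the coarse median on a product of coarse median spaces is computed coordinatewise, this product map is quasimedian, and since it is a quasiisometry in each factor, the product is a quasiisometry. Composing with $\Phi$ yields a quasimedian quasiisometric embedding of $G$ into the finite-dimensional CAT(0) cube complex $\prod_{i=1}^\chi Q_i$. Because $G$ is a finitely generated group equipped with its word metric, it is $1$-connected, so the hypotheses of Proposition~\ref{proposition:hagenpetyt2.11} are met. Applying that proposition produces a quasimedian quasiisometry $G\to Q$ to a finite-dimensional CAT(0) cube complex, as required.

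The main obstacle is the asymptotic-dimension verification for $\C_K\s_i$. Hyperbolicity and the distance formula for quasitrees of metric spaces are already recalled in Section~\ref{subsection:bbf}, but finiteness of $\asdim(\C_K\s_i)$ is not formally established there; one must combine uniform finiteness of $\asdim(\C W)$ for $W\in\s_i$ (standardly available in colourable HHGs via a Bestvina--Bromberg-style argument) with a quasitree-of-spaces asymptotic-dimension estimate of the sort used in \cite{bestvinabrombergfujiwara:constructing}. Every other step reduces to plugging results together and using the coordinatewise nature of medians in products.
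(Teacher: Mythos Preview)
Your proposal is correct and follows essentially the same route as the paper's own proof: embed $G$ via Proposition~\ref{prop:coloured_HHGs_embed}, verify each $\C_K\s_i$ has finite asymptotic dimension, cubulate each factor using Theorem~\ref{thm:tfae_hyp_asdim_quasicubical}, and finish with Proposition~\ref{proposition:hagenpetyt2.11}. The paper handles your ``main obstacle'' exactly as you anticipate, citing \cite[Cor.~3.3]{behrstockhagensisto:asymptotic} for the uniform bound on $\asdim(\C W)$ and \cite[Thm~4.24]{bestvinabrombergfujiwara:constructing} for the quasitree-of-spaces stability.
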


\begin{proof}
Proposition~\ref{prop:coloured_HHGs_embed} states that $G$ admits a quasimedian quasiisometric embedding $G\to\prod_{i=1}^\chi\C_K\s_i$ in a finite product of hyperbolic spaces. According to \cite[Cor.~3.3]{behrstockhagensisto:asymptotic}, there is a uniform bound on the asymptotic dimension of the $\C W$ for $W\in\s$, so \cite[Thm~4.24]{bestvinabrombergfujiwara:constructing} shows that each $\C_K\s_i$ has finite asymptotic dimension. Theorem~\ref{thm:tfae_hyp_asdim_quasicubical} now provides finite-dimensional CAT(0) cube complexes $Q_i$ such that $\C_K\s_i$ is quasimedian quasiisometric to $Q_i$. We therefore have a quasimedian quasiisometric embedding of $G$
\[
G\to\prod_{i=1}^\chi\C_k\s_i\to\prod_{i=1}^\chi Q_i
\]
in a finite-dimensional CAT(0) cube complex. The result follows from Proposition~\ref{proposition:hagenpetyt2.11}.
%
%
\end{proof}

\begin{remark}
In the setting of mapping class groups, finiteness of the asymptotic dimensions of the curve graphs $\C W$ was first proved by Bell--Fujiwara \cite{bellfujiwara:asymptotic}, using work of Bowditch \cite{bowditch:tight}. The bound was made explicit by Webb \cite{webb:combinatorics} and has since been improved by Bestvina--Bromberg \cite{bestvinabromberg:onasymptotic}.
\end{remark}

\section{Median-quasiconvexity} \label{section:convexity}

Quasiconvexity is an important concept in the study of hyperbolic spaces, and there have been various generalisations to larger classes of spaces, many of which coincide for mapping class groups. Indeed, the infinite-index \emph{convex cocompact} subgroups of $\mcg(S)$ coincide with the \emph{Morse} subgroups \cite{kim:stable} and the \emph{stable} subgroups \cite{durhamtaylor:convex}. These quasiconvexity conditions are quite restrictive, and such subgroups are always hyperbolic. They can also be characterised as the infinite-index subgroups whose orbits in the curve graph are quasiisometric embeddings \cite{kentleininger:shadows, hamenstadt:word, durhamtaylor:convex, abbottbehrstockdurham:largest}. 

A weaker version of quasiconvexity that is more appropriate for the coarse-median setting is that of median-quasiconvexity, introduced by Bowditch \cite{bowditch:convex}. Recall that a subset $Y$ of a CAT(0) cube complex $Q$ is convex if $\mu(x,y_1,y_2)\in Y$ whenever $y_1,y_2\in Y$.

\begin{definition}[Median-quasiconvexity]
A subset $Y$ of a coarse median space $(X,\mu)$ is $k$--median-quasiconvex if $\mu(x,y_1,y_2)$ is $k$--close to $Y$ whenever $y_1,y_2\in Y$.
\end{definition}

In the setting of hierarchically hyperbolic groups, median-quasiconvexity is equivalent to \emph{hierarchical quasiconvexity} (see \cite[\S5]{behrstockhagensisto:hierarchically:2}) by work of Russell--Spriano--Tran, who also show that the class of hierarchically quasiconvex subsets includes the other classes mentioned above \cite[Prop.~5.11, Thm~6.3]{russellsprianotran:convexity}. Other examples include multicurve stabilisers in mapping class groups and graphical subgroups of graph products of HHGs.

\begin{proposition} \label{prop:convexity}
Let $X$ be a coarse median space and let $Q$ be a finite-dimensional CAT(0) cube complex. Suppose that there exists a quasimedian coarse surjection $f:X\to Q$. For any number $k$ there is a number $r$ such that if $Y\subset X$ is $k$--median-quasiconvex, then there is a convex subcomplex $Y'\subset Q$ with $\dist_{Haus}(f(Y),Y')\le r$.

Conversely, if $\bar f:Q\to X$ is a quasimedian coarse surjection, then there is a constant $k_0$ such that $\bar f(Y')$ is $k_0$--median-quasiconvex for any convex subcomplex $Y'$ of $Q$.
\end{proposition}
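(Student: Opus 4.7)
The plan is to treat the two directions separately, starting with the easier converse. Given a convex subcomplex $Y'\subset Q$, I would take $y_1,y_2\in\bar f(Y')$ and an arbitrary $x\in X$, writing $y_i=\bar f(y_i')$ with $y_i'\in Y'$. Coarse surjectivity of $\bar f$ produces a $q\in Q$ with $\bar f(q)$ uniformly close to $x$. The coarse-Lipschitz property of the coarse median of $X$ (from the first axiom of a coarse median space) then places $\mu_X(x,y_1,y_2)$ uniformly close to $\mu_X(\bar f(q),\bar f(y_1'),\bar f(y_2'))$, and the quasimedian property of $\bar f$ puts this in turn uniformly close to $\bar f(\mu_Q(q,y_1',y_2'))$. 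Since $Y'$ is convex, $\mu_Q(q,y_1',y_2')\in Y'$, so $\mu_X(x,y_1,y_2)$ lies within a uniform constant $k_0$ of $\bar f(Y')$, as required.

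For the forward direction, I would first show that $f(Y)$ is coarsely closed under the median of $Q$. Given $q_1=f(y_1), q_2=f(y_2)\in f(Y)$ and $q\in Q$, coarse surjectivity of $f$ provides $x\in X$ with $f(x)$ uniformly close to $q$; then the same two steps (Lipschitzness of $\mu_Q$ and the quasimedian property of $f$) place $\mu_Q(q,q_1,q_2)$ uniformly close to $f(\mu_X(x,y_1,y_2))$. By $k$-median-quasiconvexity, $\mu_X(x,y_1,y_2)$ is $k$-close to $Y$, and applying the coarsely Lipschitz map $f$ places $\mu_Q(q,q_1,q_2)$ within a uniform constant of $f(Y)$. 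I would also verify that $f(Y)$ is coarsely connected: between any $y,y'\in Y$ I interpolate using $\mu_X(p_i,y,y')$ along a coarse path $(p_i)$ in $X$ from $y$ to $y'$, each such median being $k$-close to $Y$ by median-quasiconvexity and consecutive medians being uniformly close by coarse Lipschitzness of $\mu_X$. Applying $f$ then produces a coarse path in $f(Y)$.

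Having established these two properties, I would perturb $f$ to land in $Q^0$ using finite-dimensionality of $Q$, exactly as in the proof of Proposition~\ref{proposition:hagenpetyt2.11}, and then invoke \cite[Prop.~2.8]{hagenpetyt:projection} to obtain a 1--connected median subalgebra $M\subset Q^0$ at bounded Hausdorff-distance from $f(Y)$. Since 1--connected median subalgebras of $Q^0$ are closed under the median and isometrically embedded, $M$ is precisely the 0--skeleton of a convex subcomplex $Y'\subset Q$, which gives the desired conclusion. The main obstacle is not the overall strategy but the bookkeeping: I need to track how the final constant $r$ depends on $k$ through all the intermediate uniform quantities (the quasimedian constant of $f$, the coarse-Lipschitz constants of $f$ and of $\mu_X,\mu_Q$, and the coarse-surjectivity constant), in particular to confirm that the hypotheses of \cite[Prop.~2.8]{hagenpetyt:projection} are met with uniform data depending only on $k$ and the ambient structural data of $X$, $Q$, and $f$.
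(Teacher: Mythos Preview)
Your converse direction is correct and is precisely the expansion of what the paper calls ``obvious''.

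For the forward direction, your approach has a genuine gap at the final step. Proposition~2.8 of \cite{hagenpetyt:projection} only produces a $1$--connected \emph{median subalgebra} $M\subset Q^0$, and such an $M$ need not be the $0$--skeleton of a \emph{convex} subcomplex of $Q$. For a concrete counterexample, in the $3$--cube $\{0,1\}^3$ the set $\{(0,0,0),(1,0,0),(1,1,0),(1,1,1)\}$ is a $1$--connected median subalgebra (hence isometrically embedded), but it is not convex: the median of $(0,0,0)$, $(1,1,1)$, and $(0,0,1)$ is $(0,0,1)$, which lies outside the set. In the proof of Proposition~\ref{proposition:hagenpetyt2.11} this distinction did not matter, because the goal there was only a quasiisometry to \emph{some} CAT(0) cube complex, and the full subcomplex on $M$ is one. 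Here, however, the statement asks for a convex subcomplex $Y'\subset Q$, so the conclusion does not follow.

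The paper avoids this entirely by taking $Y'=\hull f(Y)$ directly and invoking \cite[Lem.~2.18]{haettelhodapetyt:coarse}: setting $Y_0=f(Y)$ and $Y_{i+1}=\mu_Q(Q^0,Y_i,Y_i)$, one has $Y_{\dim Q}=\hull f(Y)$. Your own first paragraph for the forward direction already shows that each $Y_{i+1}$ lies in a uniform neighbourhood of $Y_i$ (this is exactly your coarse median-closure computation), so iterating $\dim Q$ times bounds $\dist_{Haus}(f(Y),Y')$ in terms of $k$ and the structural constants. This also renders your coarse-connectivity argument unnecessary.
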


\begin{proof}
We may assume that $f$ maps $Y$ into the $0$--skeleton of $Q$. According to \cite[Lem.~2.18]{haettelhodapetyt:coarse}, the convex hull $Y'=\hull f(Y)$ can be obtained from $f(Y)$ by taking medians at most $\dim Q$ times. More precisely, if we set $Y_0=f(Y)$ and define $Y_{i+1}=\mu_Q(Q^0,Y_i,Y_i)$ for $i\ge0$, then $Y_{\dim Q}=Y'$. Using this, the first statement easily follows from the quasimedian property and coarse surjectivity of $f$. The converse statement is obvious.
\end{proof}

The quasimedian property of $\Phi$ established in Theorem~\ref{thm:colourable_HHG_QC_body} lets us apply Proposition~\ref{prop:convexity} to colourable HHGs.

\begin{corollary} \label{cor:convexity_correspondence}
Let $G$ be a colourable HHG. The map $\Phi$ gives a correspondence, in the sense of Proposition~\ref{prop:convexity}, between median-quasiconvex (alias hierarchically quasiconvex) subsets of $G$ and convex subcomplexes of $Q$.
\end{corollary}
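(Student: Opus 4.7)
The plan is to obtain both directions of the correspondence by applying Proposition~\ref{prop:convexity} twice: once with $f=\Phi$ playing the role of the quasimedian coarse surjection $X\to Q$, and once with a quasi-inverse of $\Phi$ playing the role of $\bar f:Q\to X$.

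First I would invoke Theorem~\ref{thm:colourable_HHG_QC_body} to record that $\Phi:G\to Q$ is a quasimedian quasiisometry, hence in particular a quasimedian coarse surjection. The first half of Proposition~\ref{prop:convexity} then applies verbatim, producing a function $r=r(k)$ such that every $k$--median-quasiconvex $Y\subset G$ has $\Phi(Y)$ within Hausdorff distance $r$ of a convex subcomplex of $Q$. The identification of median-quasiconvexity with hierarchical quasiconvexity in the HHG setting is already recorded in \cite[Prop.~5.11]{russellsprianotran:convexity} and requires no additional work here.

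For the converse direction I would fix any quasi-inverse $\bar\Phi:Q\to G$ of $\Phi$, which exists and is a quasiisometry by standard arguments, and verify that $\bar\Phi$ is itself quasimedian. This is a formal diagram chase: after applying $\Phi$ to both $\bar\Phi\,\mu_Q(q_1,q_2,q_3)$ and $\mu_G(\bar\Phi q_1,\bar\Phi q_2,\bar\Phi q_3)$, the quasimedian property of $\Phi$ combined with $\Phi\bar\Phi\approx\mathrm{id}_Q$ and $1$--Lipschitzness of $\mu_Q$ in each coordinate shows that both images lie uniformly close to $\mu_Q(q_1,q_2,q_3)$; since $\Phi$ is a quasiisometric embedding, the two preimages are uniformly close in $G$. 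Applying the second half of Proposition~\ref{prop:convexity} to $\bar\Phi$ then yields a constant $k_0$ such that every convex subcomplex of $Q$ is sent by $\bar\Phi$ to a $k_0$--median-quasiconvex subset of $G$.

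I do not anticipate a substantive obstacle: Proposition~\ref{prop:convexity} does the real work, and the only step not already explicit in the literature is the folklore observation that a quasi-inverse of a quasimedian quasiisometry is itself quasimedian. The only mild bookkeeping is checking that the two assignments $Y\mapsto Y'$ and $Y'\mapsto\bar\Phi(Y')$ are mutually inverse up to bounded Hausdorff error, which is immediate from $\Phi\bar\Phi\approx\mathrm{id}_Q$ and $\bar\Phi\Phi\approx\mathrm{id}_G$ together with coarse Lipschitzness of both maps.
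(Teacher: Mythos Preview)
Your proposal is correct and matches the paper's approach exactly: the paper simply notes that the quasimedian property of $\Phi$ established in Theorem~\ref{thm:colourable_HHG_QC_body} allows one to apply Proposition~\ref{prop:convexity}, and gives no further detail. Your write-up fills in the small missing pieces (that a quasi-inverse of a quasimedian quasiisometry is again quasimedian, and that the two assignments are mutual inverses up to bounded Hausdorff error) in the expected way.
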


In view of this, one can recover the colourable case of \cite[Thm~3.5]{haettelhodapetyt:coarse}, and its consequences for bounded packing, from the Helly property for convex subcomplexes of CAT(0) cube complexes \cite[Thm~2.2]{roller:poc}. In the more general setting it is deduced from work of Chepoi--Dragan--Vax\`es \cite[Thm~5.1]{chepoidraganvaxes:core} on the coarse Helly property for quasiconvex subsets of hyperbolic graphs.

\begin{corollary} \label{cor:helly}
Let $\cal Y$ be a collection of $k$--median-quasiconvex subsets of a colourable HHG $(G,\s)$ that either is finite or contains a bounded element. For any number $r$ there is a corresponding number $R$ such that if the subsets $Y\in\cal Y$ are pairwise $r$--close, then there is some $g\in G$ that is $R$--close to every $Y\in\cal Y$.
\end{corollary}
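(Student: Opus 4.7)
The plan is to transfer the problem across the quasimedian quasiisometry provided by Theorem~\ref{thm:colourable_HHG_QC_body} and then exploit the Helly property for convex subcomplexes of CAT(0) cube complexes, so that the entire argument takes place on the cubical side.

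First, fix a quasimedian quasiisometry $\Phi\colon G\to Q$ as in Theorem~\ref{thm:colourable_HHG_QC_body}, together with a quasimedian quasi-inverse $\bar\Phi\colon Q\to G$. By Corollary~\ref{cor:convexity_correspondence}, there is a constant $r_0=r_0(k)$ such that to each $Y\in\cal Y$ one can associate a convex subcomplex $Y'\subset Q$ with $\dist_{Haus}(\Phi(Y),Y')\le r_0$. Because $\Phi$ is a quasiisometry, the pairwise $r$--closeness of the elements of $\cal Y$ in $G$ translates into pairwise $r_1$--closeness of the corresponding $Y'\subset Q$, where $r_1=r_1(r,k,\Phi)$.

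Second, I would thicken. The combinatorial $m$--neighbourhood of a convex subcomplex of a CAT(0) cube complex is again a convex subcomplex, so the family $\{N_{r_1}(Y'):Y\in\cal Y\}$ consists of pairwise-intersecting convex subcomplexes of $Q$. Now apply the Helly property \cite[Thm~2.2]{roller:poc}: this produces a point $q\in\bigcap_{Y\in\cal Y}N_{r_1}(Y')$. Pulling back via $\bar\Phi$ and using that $\bar\Phi$ is coarsely Lipschitz, we find $g=\bar\Phi(q)\in G$ lying within some constant $R=R(r,k,\Phi)$ of every $\Phi(Y)$, hence of every $Y$, which is what we want.

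The main obstacle is the infinite-family case, since Roller's Helly for arbitrary collections of convex subcomplexes of a CAT(0) cube complex typically requires some compactness input (bounded convex subcomplexes in a finite-dimensional cube complex need not be locally finite). This is precisely where the hypothesis of containing a bounded element enters: if $Y_0\in\cal Y$ is bounded in $G$, then $Y'_0$ is a bounded convex subcomplex, and one can reduce to Helly inside $Y'_0$ by taking the median (gate) projection $\pi\colon Q\to Y'_0$. Since $\pi$ is $1$--Lipschitz and maps convex subcomplexes to convex subcomplexes, the sets $\pi(N_{r_1}(Y'))\subset Y'_0$ form a pairwise-intersecting family of convex subcomplexes of the bounded complex $Y'_0$, and a direct finite-intersection-property argument (or Roller's theorem applied inside $Y'_0$) produces a common vertex, which is then lifted to $Q$ and pulled back via $\bar\Phi$. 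In the finite case no such reduction is needed and Helly applies directly.
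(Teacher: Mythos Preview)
Your approach is exactly the one the paper indicates: the paper does not write out a proof but simply remarks that the corollary follows from Corollary~\ref{cor:convexity_correspondence} together with the Helly property for convex subcomplexes of CAT(0) cube complexes \cite[Thm~2.2]{roller:poc}. You have supplied the details the paper omits, including the reduction via gating in the infinite case.

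One concrete correction: the combinatorial $r$--neighbourhood of a convex subcomplex is \emph{not} in general convex. In $\mathbf Z^2$ the $1$--ball about the origin is $\{0,\pm e_1,\pm e_2\}$, but $(1,1)$ lies in the median interval between $(1,0)$ and $(0,1)$. The fix is routine: replace $N_{r_1}(Y')$ by its convex hull, which (since hulls are obtained by at most $\dim Q$ rounds of medians, as in the proof of Proposition~\ref{prop:convexity}) lies within a uniform distance of $Y'$ depending only on $r_1$ and $\dim Q$; the rest of your argument goes through with adjusted constants. Also, in the infinite case, the phrase ``Roller's theorem applied inside $Y'_0$'' is slightly off: Roller's statement is for finite families, and what you actually need to finish is that in a complete CAT(0) space any family of closed bounded convex subsets with the finite-intersection property has nonempty intersection---a standard compactness fact, but not the theorem you cite.
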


In the setting of colourable HHGs, this generalises \cite[Prop~6.3]{antolinmjsistotaylor:intersection}, the proof of which relies in an essential way on the fact that stable subgroups are hyperbolic. An important point about Corollary~\ref{cor:helly} is that the constant $R$ is independent of the cardinality of the collection $\cal Y$.

In fact, for hyperbolic spaces of finite asymptotic dimension, the Chepoi--Dragan--Vax\`es result \cite[Thm~5.1]{chepoidraganvaxes:core} can itself be recovered from Theorem~\ref{thm:tfae_hyp_asdim_quasicubical} and Proposition~\ref{prop:convexity} in exactly the same way.

\subsubsection*{\textbf{\emph{Recovering \cite[Thm~1.4]{durhamminskysisto:stable} for groups.}}}  ~

We conclude with a brief description of how to recover a result of Durham--Minsky--Sisto about approximating finite subsets of colourable HHGs. Though the proof is simpler, the version obtained here gives less control on the dimension, and the equivariant version stated as \cite[Thm~4.1]{durhamminskysisto:stable} does not directly follow from the results of this article. 

Let $(G,\s)$ be a colourable HHG, and let $\psi:G\to Q$ be a quasimedian quasiisometry to a finite-dimensional CAT(0) cube complex, as provided by Theorem~\ref{thm:colourable_HHG_QC_body}, with quasiinverse $\bar\psi:Q\to G$. Given a finite subset $F\subset G$, let $Q_F=\hull\psi(F)$. Note that $Q_F$ is finite. By Proposition~\ref{prop:convexity} and \cite[Prop.~5.11]{russellsprianotran:convexity}, the map $\phi_F=\bar\psi|_{Q_F}:Q_F\to H_\theta(F)$ is a $K$--quasimedian $(K,K)$--quasiisometry, where $H_\theta(F)$ is the $\theta$--hull from \cite[Def.~6.1]{behrstockhagensisto:hierarchically:2}, and $K$ is independent of $F$.

\begin{corollary}[{\cite[Thm~1.4]{durhamminskysisto:stable}}]
Suppose that $F_1,F_2\subset G$ have $|F_1|=|F_2|=k$ and $\dist_{Haus}(F_1,F_2)\le1$. There is a constant $K'=K'(k,K)$ and a CAT(0) cube complex $Q'$ with a $K$--quasimedian $(K,K)$--quasiisometric embedding $\phi':Q'\to G$ such that $\dist(\phi'\eta_i,\phi_i)\le K'$, where $\eta_i:Q_{F_i}\to Q'$ are hyperplane deletion maps that delete at most $2kK$ hyperplanes.
\end{corollary}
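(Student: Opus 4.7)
The plan is to work inside the common ambient CAT(0) cube complex $Q_F$ with $F=F_1\cup F_2$, and to let $Q'$ be the ``common core'' of $Q_{F_1}$ and $Q_{F_2}$ obtained by gate projection. Concretely, $|F|\le 2k$, and by the discussion preceding the corollary, $\phi_F=\bar\psi|_{Q_F}:Q_F\to H_\theta(F)$ is a $K$--quasimedian $(K,K)$--quasiisometry (the constant $K$ depends only on the HHG, not on $F$). Since $Q_F$ is convex in $Q$ and $\psi(F_i)\subset\psi(F)\subset Q_F$, each $Q_{F_i}=\hull\psi(F_i)$ is a convex subcomplex of $Q_F$.

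Let $Q'$ be the gate projection of $Q_{F_2}$ onto $Q_{F_1}$, a convex subcomplex of $Q_{F_1}$; via the canonical bridge isomorphism this is identified with the gate projection of $Q_{F_1}$ onto $Q_{F_2}$ sitting inside $Q_{F_2}$. Define $\eta_1:Q_{F_1}\to Q'$ to be the gate projection onto $Q'\subset Q_{F_1}$, and $\eta_2:Q_{F_2}\to Q'$ to be the gate projection onto the other copy of $Q'$ followed by the bridge isomorphism. These are hyperplane deletion (restriction quotient) maps collapsing exactly the hyperplanes of $Q_{F_i}$ that fail to cross $Q_{F_{3-i}}$ inside $Q_F$. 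Take $\phi'=\bar\psi|_{Q'}:Q'\to G$; since $Q'$ is a convex subcomplex of $Q_F$, this restriction inherits the $K$--quasimedian $(K,K)$--quasiisometric embedding property.

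The hyperplane bound is established by a pairing argument. Since $\dist_{Haus}(F_1,F_2)\le 1$, we can choose a function $\tau:F_1\to F_2$ with $\dist(f,\tau(f))\le 1$, so $\dist_Q(\psi(f),\psi(\tau(f)))\le 2K$. Any hyperplane $h$ of $Q_F$ crossing $Q_{F_1}$ but not $Q_{F_2}$ must separate two points of $\psi(F_1)$ while having $\psi(F_2)$ entirely on one side; choosing $f\in F_1$ whose image lies on the opposite side from $\psi(F_2)$, the hyperplane $h$ separates $\psi(f)$ from $\psi(\tau(f))$. Since at most $2K$ hyperplanes separate any such pair, summing over $f\in F_1$ yields at most $2kK$ deleted hyperplanes; the same bound holds by symmetry for $\eta_2$.

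To verify $\dist(\phi'\eta_i,\phi_i)\le K'$: for each vertex $x\in Q_{F_i}$, the distance in $Q_F$ between $x$ and $\eta_i(x)$ is bounded by the number of hyperplanes separating them, hence by the number of deleted hyperplanes plus, in the $i=2$ case, the bridge width (the number of hyperplanes of $Q_F$ separating $Q_{F_1}$ from $Q_{F_2}$, which is again at most $2K$ by applying the same pairing argument to a single $f$). Applying the coarse Lipschitz constant of $\bar\psi$ then gives a bound $K'=K'(k,K)$. The main technical obstacle is reconciling the two natural copies of $Q'$ sitting in $Q_{F_1}$ and $Q_{F_2}$ respectively; the bridge isomorphism resolves this up to a bounded ambiguity that is absorbed into $K'$.
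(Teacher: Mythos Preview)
Your proof is correct and follows essentially the same strategy as the paper: both take $Q'$ to be the gate projection of $Q_{F_2}$ into $Q_{F_1}$ (the common core dual to the hyperplanes crossing both hulls), identify the $\eta_i$ as the hyperplane-deletion maps collapsing the $\le 2kK$ hyperplanes not shared, and set $\phi'=\bar\psi|_{Q'}=\phi_{F_1}|_{Q'}$. The only cosmetic difference is that you pass through the ambient hull $Q_F$ with $F=F_1\cup F_2$, whereas the paper works directly in $Q$ and is terser about the hyperplane count; your pairing argument spells out exactly the estimate the paper asserts.
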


\begin{proof}
Because $\dist_{Haus}(\psi(F_1),\psi(F_2))\le2K$, the convex hulls $Q_{F_1}$ and $Q_{F_2}$ only differ by at most $2kK$ hyperplanes. The cubical gate map $Q_{F_1}\to Q_{F_2}$ restricts to an isomorphism of subcomplexes $Q'_1\to Q'_2$. As an abstract cube complex, $Q'_i$ is dual to the hyperplanes crossing both $Q_{F_1}$ and $Q_{F_2}$, so we have $\eta_i$ as desired. Observing that $\dist_{Haus}(Q'_1,Q'_2)\le2K$ and letting $\phi'=\phi_{F_1}|_{Q'_1}$, it is straightforward to check the coarse agreement of $\phi'\eta_i$ and $\phi_i$.
\end{proof}

\bibliographystyle{alpha}
\bibliography{bibtex}
\end{document}